\documentclass[paper=a4, fontsize=11pt]{scrartcl} 

\usepackage[T1]{fontenc} 
\usepackage{fourier} 
\usepackage[english]{babel} 
\usepackage{amsmath,amsfonts,amsthm} 
\usepackage{amsmath, calligra, mathrsfs}
\usepackage{amssymb}
\usepackage{mathtools}
\usepackage{hyperref}
\usepackage{mathdots}
\usepackage{array}
\usepackage[mathscr]{euscript}
\usepackage{verbatim}
\usepackage[dvipsnames]{xcolor}
\usepackage{mdframed} 
\usepackage{soulutf8}
\usepackage{stmaryrd}
\usepackage{tikz-cd}
\usepackage{hyperref}
\usepackage{lipsum} 

\usepackage{sectsty} 
\allsectionsfont{\centering \normalfont\scshape} 

\usepackage{fancyhdr} 
\usepackage{xurl}
\newcommand*{\sheafhom}{\mathcal{H}\kern -.5pt om}
\numberwithin{equation}{section} 
\numberwithin{figure}{section} 
\numberwithin{table}{section} 

\newtheorem{thm}{Theorem}[section]
\newtheorem{cor}[thm]{Corollary}
\newtheorem{prop}[thm]{Proposition}

\newtheorem{lem}[thm]{Lemma}

\theoremstyle{definition}
\newtheorem{defn}[thm]{Definition}

\newtheorem{exmp}[thm]{Example}

\newtheorem{notn}[thm]{Notation}

\theoremstyle{remark}
\newtheorem{rem}[thm]{Remark}

\DeclareMathOperator{\St}{st}

\DeclareMathOperator{\lk}{lk}

\DeclareMathOperator{\Susp}{Susp}

\DeclareMathOperator{\Cone}{Cone}

\DeclareMathOperator{\Span}{Span}

\DeclareMathOperator{\Ast}{ast}

\newcommand{\horrule}[1]{\rule{\linewidth}{#1}} 

\title{	
	\normalfont \normalsize 
	\textsc{} \\ [25pt] 
	\horrule{0.5pt} \\[0.4cm] 
	\huge Convex unions and completions from simplicial pseudomanifolds
	
	\horrule{2pt} \\[0.5cm] 
}

\author{Soohyun Park \\ \href{mailto:lalaland.cappelletti@gmail.com}{lalaland.cappelletti@gmail.com} } 

\date{\normalsize February 5, 2026} 

\begin{document}
	
	\maketitle 
	
	\begin{abstract}

		\noindent While intersections of convex sets are convex, their unions have rather complicated behavior. Some natural contexts where they appear include duality arguments involving boundaries of convex sets and valuations, which have an Euler characteristic-like structure. However, there are certain settings where the convexity property itself is important to consider. For example, this includes (preservation of) positivity properties of divisors on toric varieties under blowdowns. In the case of (restrictions of) conormal bundles, this can be interpreted in terms of interactions between local convexity data stored in rational equivalence relations. We consider generalizations to realizations of simplicial pseudomanifolds and replace rational equivalence with effects of PL homeomorphisms. \\
		
		\noindent Decomposing the PL homeomorphisms into edge subdivisions and contractions, we characterize the space of suitable contraction points compatible with local convexity properties in terms of convex unions and completions. This gives rise to certain external edge subdivisions that make this ``contraction space'' of the starting edge empty, which is unexpected given the expected ``increased convexity'' from edge subdivisions. We also obtain strong affine/linear restrictions on realizations of facets containing nearby edges preserving local convexity. This implies that contracting certain nearby edges results in a very large or very small contraction space of the starting edge. As for boundary behavior, there are parallels between effects of PL homeomorphisms on induced 4-cycles in the 1-skeleton. Finally, we find effects of PL homeomorphisms and suspensions on analogues of local convexity properties stored by linear systems of parameters. This indicates that simplicial spheres PL homeomorphic to the boundary of a cross polytope store record local convexity changes in the most natural way.

	\end{abstract}
	
	\section*{Introduction}
	
	Although convexity is preserved under intersections, its behavior under unions is more complicated. Some common examples where unions of convex sets appear include duality arguments converting between information on the boundary of convex sets and their support planes (p. 32, Theorem 1.6.3 on p. 33, Theorem 1.6.9 on p. 35 of \cite{Sch}) and valuations on convex sets (Chapter 6 of \cite{Sch}). In particular, the inclusion-exclusion structure in the latter case essentially allows finite unions of convex sets to be treated in a similar way to convex sets in the latter setting. However, there are natural settings where the actual convexity property of such unions is important to consider. For example, this includes positivity properties of divisors on toric varieties following a blowdown (p. 130 -- 132 of \cite{CLS}). Specializing to (restrictions of) conormal bundles on toric varieties \cite{LR}, this has connections to positivity questions on polynomials occurring in signatures of toric varieties and many different combinatorial contexts including permutation statistics \cite{Athgam}. \\

	We will study the underlying combinatorial structures from the more general perspective of realizations of simplicial pseudomanifolds. The preservation of local convexity properties under analogous transformations translates to convexity of unions of certain sets (Theorem \ref{contractpointspaceexpress}). In particular, we are working with piecewise linear (PL) homeomorphisms of simplicial pseudomanifolds. Note that the examples of topological properties involving finite unions of convex sets in the literature were mainly related to Euler characteristics (e.g.  discussion on p. 339 of \cite{Sch}). In place of specializing rational equivalence relations as in the case of toric varieties, the local convexity information is stored in inner products from rational equivalence relations and specialize to wall relations (p. 300 -- 302 of \cite{CLS}, Proposition \ref{ratwallconv}). For simplicial pseudomanifolds, we use a hyperplane (non)separation interpretation of local convexity (Proposition \ref{convsep}) and analyze the effect of PL homeomorphisms. \\

	By a result of Alexander (Corollary [10:2d] on p. 302 of \cite{Alex} and Corollary 4.1 on p. 75 of \cite{LN}), this reduces to considering edge subdivisions and contractions. In Section \ref{presconpl}, we will assume that subdividing vertices and contraction points are realized by linear interpolations of the edges involved although the reasoning involved often does not use this assumption (Remark \ref{genreasonlsop} and Remark \ref{edgesubdivconreal}). As a start, edge subdivisions always preserve convexity of existing wall crossings and introduce new convex wall crossings in this setting (Proposition \ref{edgesubdivwallconv}). Note that the individual wall crossings are not necessarily ``more convex''. \\
	
	On the other hand, edge contractions do not necessarily preserve convexity of wall crossings. For example, such edges do not even exist for boundaries of cross polytopes (Example \ref{contrcrosspolytop}). Given a fixed edge $e$ of the simplicial pseudomanifold $\Delta$ with convex wall crossings, the same reasoning shows that subdividing certain edges may produce simplicial pseudomanifolds where the contraction $e$ cannot preserve convexity of wall crossings (Theorem \ref{exsubdivcon}). This may be unexpected given the increased space of convex wall crossings coming from edge subdivisions. These are applications of our characterization of contraction points compatible with local convexity properties (Theorem \ref{contractpointspaceexpress}, Definition \ref{contptspaceconvcon}), which is the source of the connection to convex unions and completions. More generally, this space is our main tool for analyzing interactions of local convexity properties and PL homeomorphisms. \\

	In applications yielding affine/linear restrictions on realizations of facets containing ``nearby'' convex contractible edges (Definition \ref{contptspaceconvcon}), our main ideas come from general properties of convex sets and strong connectivity of simplicial pseudomanifolds. They take the form of walls with the same span inside facets containing them which are connected by wall crossings (Proposition \ref{starunintspan}). The affine/linear restrictions come from intersections of boundaries of convex sets (Proposition \ref{starunintspan}). Spreading out these restrictions uses strong connectivity of simplicial pseudomanifolds Proposition \ref{convwallspan}. Combining this with Theorem \ref{contractpointspaceexpress}, we find that contracting a nearby external edge can force extreme behavior of the starting contracting point space of an edge $e$ since it would either be at most a single point or the entire line segment realizing $e$ (Theorem \ref{convedconst}). \\
	
	There are also certain external edge subdivisions inducing such extreme behavior of the contraction point space (Theorem \ref{exsubdivcon}). In general, the outcome from external edge subdivisions depends on adjacency properties in the abstract simplicial complex. Putting this in a broader context, the restriction on nearby convex contractible edges fits into the effects of external edge contractions on contraction point spaces. Unlike external edge subdivisions, there is a variation in behavior even under same adjacency properties in the abstract simplicial complex structure (Corollary \ref{abedgeconlin}). This depends on whether the hyperplanes bounding the half-spaces determining local convexity properties separate the endpoints of the edge contracted and the size of the projection to these hyperplanes (Proposition \ref{genextcon}). Normalizing for length of vectors realizing vertices of the contracted edge can lead one to interpret this as a comparison of angles made with these hyperplanes. \\

	Motivated by combinatorial heuristics and the discussion above, we consider how boundary cases of flat wall crossings interact with suspensions and PL homeomorphisms. More specifically, we find that there are parallels between effects of PL homeomorphisms on 4-cycles in the 1-skeleton and on flat wall crossings induced by changes in the abstract simplicial complex structure (Proposition \ref{bdry4cycleflat}). For changes purely from $k^d$-realizations, we find differences in behavior arising from ``degeneracies'' from containment in hyperplanes bounding half-spaces for convexity conditions. The combinatorial heuristics come from close connections between flag simplicial complexes and local convexity in combinatorial positivity questions coming from flag simplicial complexes (\cite{LR}, \cite{Psig}, \cite{Athgam}). Note that flagness is similar to the condition for contractions of edges to preserve the PL homeomorphism class \cite{Nev}. Edge subdivisions preserve flagness and existing convex wall crossings while increasing new ones. The cases that are minimal with respect to edge subdivisions are those whose 1-skeleta are covered by induced 4-cycles. Applying a similar ``minimal'' idea led to the comparison to flat wall crossings. Heuristically, we can think about these cases as building blocks where others constructed out of edge subdivisions of them. \\

	Apart from connections to intrinsic combinatorial data from simplicial complexes, we recall that the reasoning for the main results often do not depend on linear interpolations (Remark \ref{genreasonlsop} and Remark \ref{edgesubdivconreal}). This leads us to analogous properties for more general realizations of simplicial pseudomanifolds. In the course of doing this, we find compatibility of linear interpolations with such properties and further generalizations (Remark \ref{edgesubdivconreal}, Proposition \ref{flaglsoptrace}). In place of rational equivalence, we work with linear systems of parameters. They give linear relations between coordinate realizations (e.g. center of mass relations or stress spaces in Remark \ref{lsopptconfig} and Remark \ref{lsopwallcoord}). For example, this includes rational equivalence relations of toric varieties. These linear forms also serve as a transition between combinatorial structures of abstract simplicial complexes and generalizations of Chow rings of toric varieties. Even with additional assumptions on proper colorings of the 1-skeleton, the latter does not give sufficient information on how the realizations of the faces fit together. \\

	Using an appropriate choice of linear systems of parameters, we can obtain linear forms that are closely connected to wall relations  (Example \ref{crosspolylsop}, Remark \ref{wallanaloguelsop}). We construct natural choice of linear systems of parameters that track changes from PL homeomorphisms (Proposition \ref{lsopsuspedgediv} and Proposition \ref{edgecongenlsop}). The coefficient signs involved is an application of observations at the end of Remark \ref{edgesubdivconreal}. We give an example illustrating compatibility with linear interpolations (Example \ref{edgelsopavg}). Combining components of this framework, we find effects of PL homeomorphisms and suspensions on analogues of local convexity properties stored by linear systems of parameters (Proposition \ref{flaglsoptrace}). In some sense, this indicates that simplicial spheres PL homeomorphic to boundary of a cross polytope record local convexity changes in the most ``pure'' way. Finally, the same reasoning yields properties analogous to \emph{changes} in local convexity for more general classes of simplicial pseudomanifolds (Corollary \ref{genplhomeom}). \\

	The convexity and simplicial pseudomanifold for the properties discussed above are described in Section \ref{backnot}. Afterwards, we give a hyperplane (non)separation interpretation of local convexity applicable to realizations to simplicial pseudomanifolds and define the support in Section \ref{hypsepconv}. This is the main tool of our analysis of effects of interactions between local convexity properties of realizations of simplicial pseudomanifolds and PL homeomorphisms in Section \ref{presconpl}. Finally, we connect replacements of the rational equivalence relation for toric varieties for simplicial pseudomanifolds to analogues of local convexity in Section \ref{lsopcoordrat}. This involves extending our earlier observations on more general coordinate realizations.

	\section{Background and notation} \label{backnot}

	We give some initial background on main tools to analyze (local) convexity properties of simplicial pseudomanifold realizations in Section \ref{presconpl} and Section \ref{lsopcoordrat} using the hyperplane (non)separation interpretation of convexity in Section \ref{hypsepconv}. \\
	
	Before discussing other properties, we define the main geometric property we are interested in. \\

	\begin{defn} \textbf{(Convex sets) }
		\begin{enumerate}
			\item A set $A \subset k^d$ is \textbf{convex} if $\alpha, \beta \in A$ implies that any point $(1 - \eta) \alpha + \eta \beta$ with $\eta \in [0, 1]$ on the line segment $\overline{\alpha \beta}$ is contained in $A$. 
			
			\item The ``nondegenerate'' convex sets (i.e. not affine spaces or half-spaces) can be expressed both as convex linear combinations of points on the boundary and intersections of half-spaces containing them (Corollary 1.3.5 on p. 12 and Lemma 1.4.1 on p. 16 of \cite{Sch}, Proposition 1.2.8 on p. 25 and Proposition 1.2.10 on p. 26 -- 27 of \cite{CLS}). \\
		\end{enumerate}
	\end{defn}

	We are primarily concerned with sets that are realizations of simplicial complexes. Taking a page from toric varieties, we will define this to be the support of a simplicial complex. \\

	\begin{defn} \textbf{(Support of a simplicial complex) \\} (generalizing Definition 3.1.2 on p. 106 of \cite{CLS}) \\
		The \textbf{support} $|S|$ of a $(d - 1)$-dimensional simplicial complex $S$ is a fixed realization of $S$ in $k^d$. This is the union of realizations of faces of $S$. It is based on terminology of (polyhedral) fans (e.g. in the context of toric varieties). \\
	\end{defn}
	
	\begin{rem} \textbf{(Linear independence of vertices on a face) \\}
		Since faces of simplicial complexes intersect on faces, nonempty intersections of distinct faces must lie on the boundaries of the respective faces. Given this condition, the realizations of vertices of a face of a simplicial complex will be taken to be linearly independent. \\
	\end{rem}

	\begin{notn} \textbf{(Containment in faces of simplicial complexes) \\}
		If $p$ is a vertex of $\Delta$, we will interpret the containment $p \in F$ for a face $F \in \Delta$ terms of containment of vertex sets. However, a point $\alpha$ of $k^d$ that is \emph{not} the realization of a vertex of $F$, we will interpret $\alpha \in F$ to mean the containment of $\alpha$ in the \emph{realization} of $F$ as a point of $k^d$.
	\end{notn}

	The particular simplicial complexes we are interested in are simplicial pseudomanifolds, which we define below. Since wall crossing-related arguments are important for our analysis of local convexity properties, they will also be included in the definition. \\
	
	\begin{defn} \textbf{(Simplicial pseudomanifolds and wall crossings) \\ } (p. 46 - 48 of \cite{Fri}, p. 90 - 91 from Section 24, p. 86 - 87 from Section 23, and p. 152 from Section 41 of \cite{SW}, p. 24 of \cite{St}, p. 20 and p. 37 of \cite{Hud}) \\ \label{pseudomfldwallcross}
		
		\begin{enumerate}
			
			\item A $(d - 1)$-dimensional \textbf{simplicial pseudomanifold without boundary} $\Delta$ satisfies the following properties:
			\begin{enumerate}
				\item Every face of $\Delta$ is a face of a $(d - 1)$-dimensional face of $\Delta$.
				
				\item Every $(d - 2)$-dimensional face of $\Delta$ is the face of exactly two $(d - 1)$-dimensional faces of $\Delta$.
				
				\item If $F$ and $F'$ are $(d - 1)$-dimensional faces of $\Delta$, there is a finite sequence $F = F_1, \ldots, F_m = F'$ of $(d - 1)$-dimensional faces of $\Delta$ such that $F_i \cap F_{i + 1}$ is a $(d - 2)$-dimensional face of $\Delta$. 
			\end{enumerate}
			
			In our setting, we will simply call this a \textbf{simplicial pseudomanifold. \\}

			\item In the context of toric varieties, such codimension 1 faces are called \textbf{walls}. For example, they appear in the context of positivity of divisors on toric varieties and convexity-related properties (e.g. p. 265, p. 301 -- 302 of \cite{CLS}, p. 419 -- 422 of \cite{Mat}). Since we will use convexity arguments involving facet traversals in realizations of simplicial pseudomanifolds, we will also call facet traversals on simplicial pseudomanifolds \textbf{wall crossings}. \\
		\end{enumerate}
	\end{defn}
	
	\begin{rem} \textbf{(Further comments on simplicial pseudomanifolds) \\} \label{simppseudcom}
		A discussion on the definition of a simplicial pseudomanifold is on p. 46 -- 48 of \cite{Fri}. For example, it includes possible lower-dimensional obstructions from being an actual (triangulable) manifold. Note that the original definition only included the first two conditions. The topological motivation for the third condition (strong connectivity) is discussed on p. 47 of \cite{Fri} with further details on p. 90 - 91 from Section 24, p. 86 - 87 from Section 23, and p. 152 from Section 41 of \cite{SW}. A more ``intrinsic'' description of the second condition comes from links over such coming from $S^1$ (p. 26 of \cite{Hud}). Note that there are differences in whether the third part is assumed in different contexts (e.g. compare p. 24 of \cite{St} and p. 20 of \cite{Athsom} with p. 200 of \cite{NS}). \\
	\end{rem}
	
	Wall crossings have a natural connection to local convexity properties and the convex sets we will be ``tiling'' the support with (Proposition \ref{convsep}). In the context of toric varieties, local convexity data has applications to positivity properties of divisors (e.g. support functions of basepoint free and ample divisors in Theorem 6.1.7 on p. 266 -- 267 and Theorem 6.1.14 on p. 271 of \cite{CLS}) and combinatorial properties involving them (e.g. signatures of toric varieties in \cite{LR}). The result below gives a concrete relationship between wall crossings and local convexity properties. \\
	
	\begin{prop} \textbf{(Rational equivalence, wall relations, and convexity) \\} (Lemma 14-1-7 on p. 421 of \cite{Mat}) \label{ratwallconv} \\ 
		Let $\Sigma$ be a $(d - 1)$-dimensional simplicial fan lying inside $k^d$. Consider a wall crossing in a fan $\Sigma$ using the facets $F$ and $G$. Fix an on-wall ray $\rho \in \Sigma(1)$. Let $p \in F$ and $q \in G$ be the off-wall rays and $\tau \coloneq F \cap G$ be a wall containing $\rho$. The \textbf{wall relation} \[ c_p x_p + c_q x_q + \sum_{m \in \tau(1)} c_m x_m = 0 \] with $c_p > 0$ with $c_q > 0$ associated to this wall crossing is a linear relation among vertices of $F \cup G$ in $k^d$, which is unique up to scaling (p. 301 -- 302 of \cite{CLS}). \\
		
		Noting that $\rho \in \tau(1)$, we have the following equivalences using dual bases to the rays of $F$.
		
		\begin{enumerate}
			\item $D_\rho \cdot V_\tau > 0 \Longleftrightarrow \langle \rho_F^*, q \rangle < 0 \Longleftrightarrow c_\rho > 0 \Longleftrightarrow F \cup G$ is \textbf{strictly concave} along $\tau \setminus \rho$/with respect to $\rho$.
			
			\item $D_\rho \cdot V_\tau = 0 \Longleftrightarrow \langle \rho_F^*, q \rangle = 0 \Longleftrightarrow c_\rho = 0 \Longleftrightarrow F \cup G$ is \textbf{flat} along $\tau \setminus \rho$/with respect to $\rho$.
			
			\item $D_\rho \cdot V_\tau < 0 \Longleftrightarrow \langle \rho_F^*, q \rangle > 0 \Longleftrightarrow c_\rho < 0 \Longleftrightarrow F \cup G$ is \textbf{strictly convex} along $\tau \setminus \rho$/with respect to $\rho$.
		\end{enumerate}
	\end{prop}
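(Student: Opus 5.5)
The plan is to reduce all three chains of equivalences to a single sign computation, using the wall relation together with the dual basis $\{m_F^*\}_{m \in F(1)}$ of $F$. The vertices of $F$ are linearly independent and span $k^d$, so $q$ has a unique expansion $q = \sum_{m \in F(1)} \langle m_F^*, q\rangle\, m$. Here $F(1) = \tau(1) \cup \{p\}$.

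\textbf{Step 1: read off the coefficients from the wall relation.} Divide the wall relation by $c_q > 0$ to get
\[ q = -\frac{c_p}{c_q} p - \sum_{m \in \tau(1)} \frac{c_m}{c_q} m. \]
By uniqueness of coordinates in the basis $F(1)$, $\langle m_F^*, q\rangle = -c_m/c_q$ for every $m \in \tau(1)$. Taking $m = \rho$ shows that $\langle \rho_F^*, q\rangle$ and $c_\rho$ have opposite signs, which is the middle equivalence in each of the three cases. As a consistency check, $\langle p_F^*, q\rangle = -c_p/c_q < 0$. This says $q$ lies on the opposite side of $\Span(\tau)$ from $p$, which is exactly why the relation can be normalized with $c_p, c_q > 0$.

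\textbf{Step 2: the intersection number.} I would quote the standard formula from p.~301--302 of \cite{CLS}: for the curve $V_\tau$ and a wall relation normalized so that the coefficients of $p$ and $q$ equal $1$, one has $D_m \cdot V_\tau = b_m$ for the coefficient $b_m$ of $m \in \tau(1)$. Our relation is a positive rescaling of that normalized one, so $D_\rho \cdot V_\tau$ has the same sign as $c_\rho$. The formula in \cite{CLS} is stated for lattice vectors, but only signs matter here, and signs are invariant under positive rescaling. This gives the first equivalence.

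\textbf{Step 3: the geometric convexity condition.} This is the step I expect to need the most care, since ``strictly convex along $\tau \setminus \rho$ with respect to $\rho$'' has to be pinned down. I would interpret it as follows. Let $\tau' = \tau \setminus \rho$. The facet $F$ determines the half-space $\{\langle \rho_F^*, \cdot\rangle \ge 0\}$, bounded by the hyperplane $\Span(p, \tau')$, and this half-space contains $F$. Then $F \cup G$ is convex, flat, or concave at $\tau'$ according as $q$ lies in the open half-space, on the hyperplane, or strictly outside it. This matches the hyperplane (non)separation viewpoint of the paper. Equivalently, one can compare the function that is linear on $F$ with $\rho \mapsto 1$ and other rays of $F$ mapped to $0$, evaluated at $q$, against its $G$-linear counterpart, which is $0$ at $q$.

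Under this reading the last equivalence is immediate: $\langle \rho_F^*, q\rangle > 0$ exactly when $q$ lies strictly on the same side of $\Span(p,\tau')$ as $\rho$. Combining Steps 1--3 gives all three chains at once, with each trichotomy exhaustive. The main obstacle is therefore definitional. I would fix the sign convention of ``convex'' so that it agrees with \cite{Mat}, Lemma 14-1-7, and then check one explicit example, such as two adjacent facets of the boundary of the octahedron, to confirm the orientation.
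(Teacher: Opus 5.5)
The paper gives no proof of this proposition. It imports the whole chain of equivalences from Matsuki's Lemma 14-1-7, with the wall relation taken from \cite{CLS}, and only later (Proposition \ref{convsep}, Part 1) rephrases the geometric condition as hyperplane non-separation. Your argument is a correct, self-contained reconstruction by a more elementary route. Step 1, $\langle \rho_F^*, q\rangle = -c_\rho/c_q$, shows that the middle equivalence is pure linear algebra in the $F$-basis, and together with Step 3 so is the geometric one. Both therefore hold for arbitrary vectors realizing the rays over any ordered field, with no lattice or toric variety. That is exactly the setting in which the paper actually uses the result (realizations of simplicial pseudomanifolds). Your decomposition isolates the single place, Step 2, where intersection theory enters. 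The citation, by contrast, buys brevity and gets Matsuki's notion of convexity along a codimension-two face for free.

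Two points need tightening.

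In Step 2, for a simplicial but non-smooth fan you cannot rescale the relation so that both $c_p$ and $c_q$ equal $1$. There is only one scalar, and $c_p/c_q \neq 1$ in general, so the formula you quote is the smooth case. The correct argument goes as follows. For every character $\mu$, $\operatorname{div}(\chi^{\mu}) = \sum_m \langle \mu, u_m\rangle D_m \sim 0$, which gives $\sum_m (D_m \cdot V_\tau)\, u_m = 0$. This relation is supported on $F \cup G$ and has $D_p \cdot V_\tau = \operatorname{mult}(\tau)/\operatorname{mult}(F) > 0$ and $D_q \cdot V_\tau = \operatorname{mult}(\tau)/\operatorname{mult}(G) > 0$. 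By uniqueness it is a positive multiple of $(c_m)$, up to the coordinatewise positive rescaling between your $x_m$ and the primitive generators $u_m$.

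In Step 3, the octahedron cannot confirm the orientation. Every wall relation there is $x_i + x_{-i} = 0$, so every wall crossing is flat with respect to every on-wall ray. The fan of $\mathbb{P}^2$ does the job: take $F = \Cone(e_1,e_2)$, $G = \Cone(e_2,-e_1-e_2)$, $\rho = e_2$. Then $c_\rho = 1$, $D_\rho \cdot V_\tau = D_\rho^2 = 1$, and $F \cup G$ is a reflex sector.

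Better, replace the example by a proof. Take convexity along $\tau' = \tau \setminus \rho$ in Matsuki's local sense and pass to $k^d/\Span(\tau') \cong k^2$.
\begin{itemize}
\item Both $\rho_F^*$ and $p_F^*$ vanish on $\Span(\tau')$, so they descend to the quotient.
\item The images $\bar p$ and $\bar q$ lie on opposite sides of $\Span(\bar\rho)$.
\item The union $\Cone(\bar p,\bar\rho) \cup \Cone(\bar\rho,\bar q)$ of adjacent sectors is convex iff $\bar q$ lies in the closed half-plane of $\Span(\bar p)$ containing $\bar\rho$.
\end{itemize}
This last condition is exactly $\langle \rho_F^*, q\rangle \ge 0$, with equality precisely in the flat case. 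Without this reduction, your Step 3 is a definition rather than a statement, and the paper's Proposition \ref{convsep}, Part 1 (which it derives from this proposition) becomes tautological.
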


	The wall relations (see p. 301 -- 302 of \cite{CLS}) involved in the proof of this result give a transition between purely combinatorial data and algebraic structures. In Proposition \ref{convsep}, we will give a more geometric interpretation of Part 3 in terms of hyperplane (non)separation and its connection to local convexity properties. Note that this can be rephrased for the other parts as well. \\
	
	\section{(Non)separation by hyperplanes and convexity} \label{hypsepconv}

	Since we are working with convexity properties depending on coordinate realizations of simplicial complexes, we first define related terms.

	\begin{defn} (Definition 3.1.2 on p. 106 of \cite{CLS} and p. 259 of \cite{LR} applied to simplicial complexes) \\
		Consider a $(d - 1)$-dimensional simplicial complex $\Delta$ with a fixed realization in $k^d$. 
		\begin{enumerate}
			\item The \textbf{support} $|\Delta|$ of $\Delta$ is the union of the realizations of the facets of $\Delta$.
			
			\item The simplicial complex $\Delta$ is \textbf{locally convex} if $|\St_\Delta(p)|$ is convex for all vertices $p \in V(\Delta)$. 
		\end{enumerate}
	\end{defn}

	Note that the local convexity property has close connections to combinatorial positivity properties lying between unimodality and real-rootedness and nef/basepoint free properties of (restrictions of) conormal bundles of toric varieties (\cite{LR}, \cite{Psig}, \cite{Athgam}). \\
	
	Next, we give a hyperplane (non)separation interpretation of Proposition \ref{ratwallconv} followed by implications for local and global convexity properties. This will be the perspective we take while analyzing local convexity properties of realizations of simplicial pseudomanifolds. \\
	
	\color{black} 
	
	\begin{prop} \textbf{(Hyperplane (non)separation and convexity properties) \\} \label{convsep}
		\begin{enumerate}
			\item A wall crossing with a wall $\tau$ and off-wall vertices $r$ and $s$ is convex with respect to a vertex $p \in \tau$ on the wall $\tau$ (in the sense of p. 421 of \cite{Mat}) if and only if the hyperplane $\Span(\tau \setminus p, r)$ does \emph{not} separate the points $p$ and $s$. Note that this is equivalent to the hyperplane $\Span(\tau \setminus p, s)$ does \emph{not} separate the points $p$ and $r$. The wall crossing is flat with respect to $p$ when $s \in \Span(\tau \setminus p, r)$. \\
			
			\item The union $F \cup G$ of two facets $F, G \in \Delta$ sharing a wall $F \cap G$ is convex if and only if the associated wall crossing is convex with respect to every on-wall vertex $p \in F \cap G$. \\

			\item The support of the star $|\St_\Delta(p)|$ is convex if and only if all the wall crossings in $\Delta$ containing $p$ on the wall are convex with respect to $p$. \\
		\end{enumerate}
	\end{prop}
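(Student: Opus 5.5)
Part 1 reduces to Proposition \ref{ratwallconv}. Write the wall as $\tau$ with off-wall vertices $r \in F$ and $s \in G$, and fix $p \in \tau$. The vertices of $F = \tau \cup \{r\}$ form a basis of $k^d$, so there is a dual basis. The dual vector $p_F^*$ vanishes on every vertex of $\tau \setminus p$ and on $r$, so its kernel is exactly the hyperplane $H = \Span(\tau \setminus p, r)$, and $\langle p_F^*, p \rangle = 1 > 0$. Therefore $H$ fails to separate $p$ and $s$ exactly when $\langle p_F^*, s \rangle \geq 0$.

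We read convexity in the non-strict sense, meaning convex or flat. By Proposition \ref{ratwallconv}, this is equivalent to $\langle p_F^*, s\rangle \ge 0$, and the flat case is $\langle p_F^*, s \rangle = 0$, i.e. $s \in H$. For the symmetric reformulation, expand the wall relation $c_r r + c_s s + \sum_{m\in\tau} c_m m = 0$ with $c_r, c_s > 0$. Solving for $s$ in the basis of $F$ gives $\langle p_F^*, s\rangle = -c_p/c_s$. Solving for $r$ in the basis of $G$ gives $\langle p_G^*, r \rangle = -c_p/c_r$. Since $c_r, c_s > 0$, these two quantities have the same sign, so the two non-separation conditions are equivalent.

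For Part 2, the forward direction goes as follows. Assume $F \cup G$ is convex and suppose some on-wall $p$ had $\langle p_F^*, s\rangle < 0$. Take a point $x$ in the relative interior of $F$ near $\tau \setminus p$ and a point $y$ in $G$ near $s$. Because the only linear relation among the vertices is the wall relation, the segment $\overline{xy}$ leaves $F \cup G$. Concretely, the barycentric-type coordinate of $p$ along the segment becomes negative before the segment enters $G$.

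For the converse, assume the wall crossing is convex with respect to every $p \in \tau$. We show that $F \cup G$ equals the intersection of the half-spaces (or cones) cut out by the facet hyperplanes $\Span(\tau\setminus p, r)$ and $\Span(\tau \setminus p, s)$ on the side containing the configuration. One inclusion is clear. For the other, take a point $z$ of the intersection and write it in the basis of $F$. If its $r$-coordinate is $\ge 0$ then $z \in F$. Otherwise we rewrite $z$ using the wall relation to get nonnegative coordinates in $G$, and the sign conditions $c_p \le 0$ are exactly what make this work. An intersection of half-spaces is convex, which gives the claim.

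Part 3 follows from Part 2 applied locally around $p$. If $|\St_\Delta(p)|$ is convex, then for each wall $\tau \ni p$ the union $F \cup G$ is locally the star near points of $\tau$ minus $p$'s opposite face. The argument from Part 2's forward direction, run with $x$ and $y$ near $p$, then shows the crossing is convex with respect to $p$. Conversely, local convexity with respect to $p$ at every wall through $p$ makes the star locally convex at every boundary point. The boundary points here are points of the link faces, because the crossings across walls not containing $p$ lie on the boundary of the star. By strong connectivity, condition (c) of Definition \ref{pseudomfldwallcross}, the star is connected. A Tietze–Nakajima type argument (closed, connected, locally convex implies convex) then finishes the proof.

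The main obstacle is this final local-to-global step in Part 3. A second difficulty, in both Parts 2 and 3, is that the statements as written leave implicit that we are in the conical/fan setting, where the supports are cones through the origin. I would make this setting explicit and also verify that the interior of the star is connected after removing codimension-2 pieces.
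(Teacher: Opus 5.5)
\textbf{Parts 1 and 2.} These are essentially right in the conical setting you fix, and they follow the paper. Part 1 is the same reduction to Proposition \ref{ratwallconv}. Part 2 matches the paper's description of $F\cup G$ by the hyperplanes spanned by walls of $F$ or $G$ other than $F\cap G$. Two small repairs are needed.

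In the forward direction, a point $x$ ``near $\tau\setminus p$'' does not suffice. Where $\overline{xy}$ crosses $\Span(\tau)$, the sign of the $p$-coordinate depends on the ratio of the $p$- and $r$-coordinates of $x$, and nearness to $\tau\setminus p$ does not control that ratio. Take $x=r$ and $y=s$, as the paper does. The segment then meets $\Span(\tau)$ at a positive multiple of $-\sum_{m\in\tau}c_m x_m$, while $(F\cup G)\cap\Span(\tau)$ is just the cone on $\tau$.

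In the converse, the nonnegativity of the $G$-coordinates of $z$ comes from $z$ lying in the half-spaces bounded by the hyperplanes $\Span(\tau\setminus m,s)$. The hypothesis $c_m\le 0$ is what the inclusion you called clear requires.

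\textbf{The gap: converse of Part 3.} You flag this step but do not close it, and the theorem you cite does not apply as stated. In the conical setting, $|\St_\Delta(p)|$ is a closed cone whose apex $0$ is a boundary point, since no facet containing $p$ contains $-x_p$. For a cone, local convexity at the apex is equivalent to convexity: rescale any two points into a small ball around $0$. So ``locally convex at every boundary point'' already contains the conclusion. For instance, a planar wedge of angle $3\pi/2$ is closed, connected, and locally convex at every point except $0$.

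The wall-crossing hypotheses only control relative-interior points of boundary cones of codimension $1$ and $2$; the latter are the cones on $\tau\setminus p$ for walls $\tau\ni p$. They say nothing directly about $0$, or about boundary cones of codimension $\ge 3$. Passing to $S^{d-1}$ does not rescue the argument by itself: a great-circle arc longer than $\pi$ is closed, connected and locally convex there, yet its cone is not convex. Strong connectivity of $\Delta$ is beside the point, since the star is connected simply because every facet contains $p$.

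\textbf{One way to close it.} For $d\ge 3$, use star-shapedness; for $d=2$ the star is $F\cup G$ and Part 3 is Part 2.
\begin{enumerate}
\item Each facet of the star is a convex cone containing $x_p$. Also, $x_p$ is an interior point, because the link of $p$ projects to a complete fan modulo $x_p$. Hence interior points stay interior along segments toward $x_p$.
\item For generic interior $x,y$, slide the segment with endpoints $(1-t)x+tx_p$ and $(1-t)y+tx_p$ from $t=1$ to $t=0$. Let $t^*$ be the infimum of the $t$ such that all segments with parameter in $[t,1]$ lie in the star.
\item If $t^*>0$, the segment at $t^*$ lies in the star and touches its boundary at a point $z$ of its relative interior.
\item Genericity ensures that the triangle with vertices $x,y,x_p$ misses $0$ and all boundary cones of dimension $\le d-3$. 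It also ensures that $y-x$ lies in none of the finitely many hyperplanes $\Span(F\setminus p)$.
\item So near $z$ the star is either a half-space bounded by some $\Span(F\setminus p)$, or it is $F\cup G$ near the cone on $\tau\setminus p$. In the second case it is a wedge bounded by $\Span(F\setminus p)$ and $\Span(G\setminus p)$, of angle at most $\pi$ precisely by convexity with respect to $p$. Convexity with respect to the other on-wall vertices is not needed, so Part 2 is not the right citation here.
\item A segment through $z$ that stays in such a set on both sides must be parallel to a supporting hyperplane $\Span(F\setminus p)$, which contradicts genericity.
\item So $[x,y]$ lies in the star for generic interior $x,y$. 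Since the star is closed and is the closure of its interior, it is convex.
\end{enumerate}
The paper argues differently here: it tries to show directly that every vertex of $\lk_\Delta(p)$ lies on $p$'s side of each $\Span(F\setminus p)$.

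\textbf{Forward direction of Part 3.} You cannot simply reuse Part 2's segment, because a segment leaving $F\cup G$ may re-enter other facets of the star. Localize near a relative-interior point of the cone on $\tau\setminus p$, where the star coincides with $F\cup G$, not near $p$, which is interior to the star. Alternatively, as in the paper, use that $\Span(F\setminus p)$ supports the convex set $|\St_\Delta(p)|$ along the boundary facet $F\setminus p$.
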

	
	\begin{rem}
		Part 3 of Proposition \ref{convsep} is a combinatorial counterpart of the equivalence of the nefness of a Cartier divisor on a toric variety associated to a fan with full-dimensional convex support and nonnegative intersection with torus-invariant irreducible complete curves (Theorem 6.3.12 on p. 291 of \cite{CLS} and Lemma 14-1-7 on p. 421 of \cite{Mat}). \\
	\end{rem}

	\begin{proof}
		\begin{enumerate}
			\item This is a non-separating hyperplane interpretation of Proposition \ref{ratwallconv}. \\
			
			\item Let $p \in F$ and $q \in G$ be the off-wall vertices of the wall crossing associated to $F \cup G$. By Lemma 1.4.1 on p. 16 of \cite{Sch}, a closed convex set is the convex hull of its boundary unless it is an affine subspace of the ambient space $k^d$ or a half-space. Thus, the set $F \cup G$ is convex if and only if it is the convex hull of its boundary. In order for $F \cup G$ to be the convex hull of its boundary, the entire set $F \cup G$ can only lie on one side of each hyperplane spanned by vertices of any wall of $F$ or $G$ except $F \cap G$ (i.e. any wall of $F$ containing $p$ or any wall of $G$ containing $q$). More specifically, the convexity of $F$ implies that a hyperplane of the form $\Span(F \setminus m)$ for some $m \in F \setminus p$ does \emph{not} separate the points $p$ and $r$ for any $r \in F \setminus \{ p, m \}$. The same statement holds with $G$ replacing $F$ and $q$ replacing $p$. \\ 
			
			Since the vertices of $F$ always lie on the same side of $\Span(F \setminus m)$ for some $m \ne p$, the only possible ``problem vertex'' is $q$. Alternatively, we can phrase this the question of whether a hyperplane of the form $\Span(G \setminus m)$ for some on-wall vertex $m \in F \cap G$ separates $p$ and $q$. Consider when it does \emph{not} separate $p$ and $q$. If $p \in \Span(G \setminus m)$, the hyperplane $\Span(G \setminus m)$ does not separate $p$ and $q$ since $p$ lies on both half-spaces associated to the hyperplane. This is equivalent to the coefficient of $m$ in the wall relation associated to the wall crossing for $F \cup G$ being equal to $0$. Suppose that $p \notin \Span(G \setminus m)$. This would imply that $\overline{pq} \not\subset \Span(G \setminus m)$ and the intersection $\overline{pq} \cap \Span(G \setminus m)$ is a single point or the empty set. Since $q$ already lies in this intersection, we have that $\overline{pq} \cap \Span(G \setminus m) = \{ q \}$ and the hyperplane $\Span(G \setminus m)$ does \emph{not} separate $p$ and $q$. The convexity of half-spaces then implies that the entire line segment $\overline{pq}$ lies on one side of the hyperplane $\Span(G \setminus m)$. The convexity of the wall crossing for $F \cup G$ with respect to $m$ is the statement that $p$ and $m$ lie on the same side of the hyperplane $\Span(G \setminus m)$). This follows from the statement of Part 1. Repeating this for each $m \in F \cap G$ yields the stated equivalence. The same argument can be repeated with with $p$ replacing $q$ and the hyperplane $\Span(F \setminus m)$ replacing $\Span(G \setminus m)$. \\

			Alternatively, we can approach this more directly using wall relations. Note that $F$ and $G$ are convex linear combinations of their respective vertices. Since $p$ and $q$ do \emph{not} lie inside the hyperplane $\Span(F \cap G)$, the intersection $\overline{pq} \cap \Span(F \cap G)$ is a single point. In order for $F \cup G$ to be convex, this point must lie inside $F \cup G$. The linear independence of the realizations of the vertices in $F \cap G \in \Delta$ in $k^d$ implies that the point must lie in the wall $F \cap G$. Equivalently, we have that \[ (1 - \eta) x_p + \eta x_q = \sum_{r \in F \cap G} c_r x_r \] for some $\eta \in (0, 1)$ and $c_r \ge 0$ such that $\sum_{r \in F \cap G} c_r = 1$ (i.e. a convex linear combination). This can be rearranged into the relation \[ (1 - \eta) x_p + \eta x_q + \sum_{r \in F \cap G} (-c_r) x_r = 0. \] This can be interpreted as the coefficients of the on-wall vertices $r \in F \cap G$ being nonpositive in the wall relation associated to the wall crossing for $F \cup G$, which is the condition for convexity of this wall crossing with respect to each on-wall vertex $r \in F \cup G$ (see Lemma 14-1-7 on p. 421 of \cite{Mat}). \\

			\item By the boundary convex hull argument in Part 2, the convexity of $|\St_\Delta(p)|$ would imply that each wall crossing between facets of $\Delta$ containing $p$ is convex with respect to $p$. Now suppose that all wall crossings containing $p$ on the wall are convex with respect to $p$. Given each span of a wall opposite to $p$, the off-wall vertex is on the same side of this hyperplane as $p$. This means that $p$ lies in the intersection of all the half-spaces we would like to intersect in order to form the support $|\St_\Delta(p)|$. Suppose there is a vertex $m \in \lk_\Delta(p)$ that is on the opposite side of $p$ with respect to one of these hyperplanes. This would form an edge that intersects the interior of a face of $\Delta$ (e.g. an edge or facet of $\St_\Delta(p)$), which is not possible for a pair of faces of a simplicial complex.
			
		\end{enumerate}
	\end{proof}
	
	\section{Preservation of local convexity, wall crossings, and PL homeomorphisms} \label{presconpl}

	This is the section with our main results on interactions between convexity properties of realizations of simplicial pseudomanifolds and PL homeomorphisms. After initial simplifications of the problem and studying the behavior under edge subdivisions, we will find the space characterizing preservation of convex wall crossings in Section \ref{plconvcom}. The main applications of this space to possible realizations, proximity of convex contractible edges, and range of possible behavior of this space are in Section \ref{convpresrest}. Finally, we take a closer look at comparisons underlying combinatorial properties from the underlying abstract simplicial complex structure via boundary conditions in Section \ref{bdrybeh}.
	
	\subsection{PL homeomorphisms and convex unions/completions} \label{plconvcom}

	As mentioned above, we find the space characterizing preservation of local convexity properties under PL homeomorphisms (Theorem \ref{contractpointspaceexpress}) in this subsection. We start by reducing our analysis on effects of PL homeomorphisms to edge subdivisions and contractions. \\
	
	\color{black}

	\begin{thm} (Alexander, Corollary [10:2d] on p. 302 of \cite{Alex} and Corollary 4.1 on p. 75 of \cite{LN}) \\
		If $\Delta$ and $\Gamma$ are PL homeomorphic simplicial complexes, then they are connected by a sequence of edge subdivisions and contractions. \\
	\end{thm}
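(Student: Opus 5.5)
The plan is to reduce PL homeomorphism to a common subdivision and then factor subdivisions into edge (stellar) moves. First, since $\Delta$ and $\Gamma$ are PL homeomorphic, there is a simplicial complex $\Lambda$ that is a common subdivision of (simplicial complexes isomorphic to) $\Delta$ and $\Gamma$. This is the standard characterization of PL homeomorphism, obtained by triangulating the common refinement of the two linear cell structures. It is therefore enough to show the following claim: \emph{if $\Lambda$ is a subdivision of $\Delta$, then $\Lambda$ and $\Delta$ are connected by a sequence of edge subdivisions and their inverses, the edge contractions}. Concatenating the sequence $\Delta \leadsto \Lambda$ with the reversed sequence $\Lambda \leadsto \Gamma$ then gives the theorem.

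For the claim I would use Alexander's theorem on stellar moves. Any subdivision can be related to the original complex by a finite sequence of stellar subdivisions and stellar welds (inverse stellar subdivisions) at faces of arbitrary dimension. The proof goes through the derived subdivision. The barycentric subdivision $\Delta'$ is an iterated stellar subdivision of $\Delta$: star the faces in order of decreasing dimension. By a relative version of this argument, applied after a generic perturbation so that $\Lambda'$ is also a stellar subdivision of some subdivision of $\Delta'$ (Newman's argument), the complexes $\Delta$, $\Delta'$, $\Lambda'$, $\Lambda$ are all linked by stellar moves. I take this step essentially from \cite{Alex} and \cite{LN}, as the statement does.

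Next I replace each stellar move at a face of dimension at least $2$ by edge moves. For a face $\sigma = \{v_0,\dots,v_k\}$, the stellar subdivision at $\sigma$ can be written as follows. Subdivide the edge $v_0v_1$ by a new vertex $w$. Then perform moves that successively absorb the remaining vertices $v_2,\dots,v_k$: subdivide the edge $w v_j$ and then contract the old vertex $w$ into the new one, checking at each stage that the link condition $\lk(a)\cap\lk(b)=\lk(ab)$ holds so that each contraction is a legitimate simplicial move. By induction on $k$, the end result has the combinatorial type of the stellar subdivision at $\sigma$. Stellar welds are handled by reversing these sequences. This is the content of Lickorish's Corollary 4.1 in \cite{LN}.

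I expect the main obstacle to be this last step: verifying that the intermediate edge contractions are admissible (the link condition) and that the final complex really is isomorphic to the stellar subdivision at $\sigma$. What I would try first is an induction on $\dim \sigma$. The key check is that the link of the new vertex after each step equals $\partial\sigma_j * \lk(\sigma)$, where $\sigma_j$ is the part of $\sigma$ absorbed so far. Once this is established, the remaining steps follow formally from the cited theorems.
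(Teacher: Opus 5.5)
Your overall route matches the paper's: the paper gives no argument of its own and simply invokes Alexander's stellar theory through \cite{Alex} and \cite{LN} (the latter is Lutz--Nevo, not Lickorish). Your reduction to a common subdivision and your use of the Alexander--Newman stellar-move theorem are fine as citations.

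The problem is the one step you actually carry out. Contracting the old vertex $w$ into the vertex $u$ just created by subdividing $\{w, v_j\}$ is exactly the inverse of that subdivision. The faces $u * v_j * \lk(wv_j)$ become $w * v_j * \lk(wv_j)$, and nothing else changes. So each round returns the complex you started the round with. The surviving vertex still has link $\{v_0,v_1\} * \lk(v_0v_1) = \partial\sigma_1 * \lk(\sigma_1)$, your invariant $\partial\sigma_j * \lk(\sigma_j)$ already fails at $j=2$, and for $k \ge 2$ you never reach the stellar subdivision at $\sigma$. For a triangle $v_0v_1v_2$ in a surface, you end with only the edge $v_0v_1$ subdivided, not the coned triangle.

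The fix is to contract $w$ into a vertex of $\sigma_{j-1} = \{v_0, \dots, v_{j-1}\}$, say $v_0$, rather than into $u$.

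\begin{itemize}
\item Suppose $w$ is the center of the stellar subdivision at $\sigma_{j-1}$. After subdividing $\{w, v_j\}$ by $u$, you have $\lk(w) = \partial\sigma_{j-1} * L'$, where $L'$ is $\lk(\sigma_{j-1})$ with $v_j$ renamed $u$.
\item Since $\sigma_{j-1}$ is no longer a face, $\lk(w) \cap \lk(v_0) = \partial(\sigma_{j-1} \setminus v_0) * L' = \lk(\{w, v_0\})$, so the link condition holds.
\item Contracting $\{w, v_0\}$ deletes $w$ and restores exactly $\sigma_{j-1} * L'$. In other words, it is the stellar weld at $\sigma_{j-1}$.
\item Afterwards $\lk(u) = (v_j * \partial\sigma_{j-1} \cup \sigma_{j-1}) * \lk(\sigma_j) = \partial\sigma_j * \lk(\sigma_j)$, and a face-by-face comparison shows the complex is the stellar subdivision at $\sigma_j$.
\end{itemize}

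The same computation shows that every stellar weld is an admissible edge contraction: contract the center into any vertex of the welded simplex. That is the observation your sketch is missing, and it also makes the separate treatment of welds by reversing sequences unnecessary.
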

	
	Since changes induced by (stellar) subdivisions and contractions of edges are induced by local data from links over edges, the changes are associated to \emph{local} data from links over faces. We will connect this back to local convexity data via natural changes in linear systems of parameters and $k^d$-realizations arising from PL homeomorphisms. While we will assume that subdividing vertices and contraction points of edges are realized by linear interpolations, we note that a large part of the reasoning applies to more general realizations (Remark \ref{genreasonlsop} and Remark \ref{edgesubdivconreal}). \\

	As a start, we show that edge subdivisions with the subdividing vertex realized by a linear interpolation increase the space of convex wall crossings. We note that they do not necessarily make individual wall crossings ``more convex''. This is something we will keep in mind for effects of PL homeomorphisms involving external edges. While we will use linear interpolations for realizations of subdividing vertices and contraction points, the reasoning involved often does \emph{not} depend on the realizations used. Further discussions on realizations are also in Remark \ref{genreasonlsop} from Section \ref{lsopcoordrat}. \\
	
	\color{black}
	
	\begin{prop} \textbf{(Edge subdivisions and (local) convexity) \\} \label{edgesubdivwallconv}
		Let $\Delta'$ be the (stellar) subdivision of a $(d - 1)$-dimensional Cohen--Macaulay simplicial complex $\Delta$ with respect to an edge $e = \{ a, b \} \in \Delta$. Let $v \in \Delta'$ be the subdividing vertex. Let $x_p$ be the coordinate representation of the point $p \in V(\Delta)$ in $k^d$. Suppose that $x_v = (1 - \eta) x_a + \eta x_b$ for some $\eta \in (0, 1)$ and $x_r$ for $r \in V(\Delta)$ is the same in $\Delta'$ as $\Delta$. \\
		
		If the starting simplicial complex $\Delta$ has convex wall crossings, the wall crossings of $\Delta'$ are also convex. In particular, the wall crossings using $v$ as an on-wall vertex are ``at least as convex'' as those of $\Delta$ in the sense that previously convex wall relations stay convex (or become more convex) and newly introduced wall crossings are convex (Proposition \ref{ratwallconv} and Proposition \ref{convsep}). While this is not necessarily true for wall crossings in $\Delta'$ using $v$ as an off-wall vertex, the induced coefficient changes do \emph{not} introduce any new flat wall crossings. \\
	\end{prop}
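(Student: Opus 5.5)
We classify the wall crossings of $\Delta'$ by their position relative to the subdividing vertex $v$ and check each class using the wall relations of Proposition \ref{ratwallconv} together with the separation criterion of Proposition \ref{convsep}. Recall that the stellar subdivision changes only the facets of $\St_\Delta(e)$. Each facet $F = \{a, b\} \cup \sigma$ with $\sigma \in \lk_\Delta(e)$ is replaced by $F_a = \{a, v\} \cup \sigma$ and $F_b = \{v, b\} \cup \sigma$. Since $x_v = (1 - \eta) x_a + \eta x_b$, the realization of $F$ is the union of those of $F_a$ and $F_b$. Hence $|\Delta'| = |\Delta|$, and the realization of $\Delta'$ is a geometric subdivision of that of $\Delta$. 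There are three types of wall crossings in $\Delta'$:
\begin{enumerate}
\item crossings not involving $v$, which are unchanged from $\Delta$;
\item \emph{new internal} crossings between $F_a$ and $F_b$ across the wall $\{v\} \cup \sigma$;
\item crossings across a wall containing $v$ between $F_a$ (or $F_b$) and a subdivided neighbor $G_a$ (or $G_b$), where $F, G \in \St_\Delta(e)$ are adjacent across a wall containing $e$; we also include crossings in which $v$ is off-wall, namely between $F_a$ and a facet of $\Delta$ sharing a wall $\{a\} \cup \sigma'$ with $F$ (a wall not containing $b$).
\end{enumerate}

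For the new internal crossings, the wall relation is read off directly. Writing $x_v = (1 - \eta) x_a + \eta x_b$ gives
\[ (1 - \eta) x_a + \eta x_b - x_v = 0, \]
in which all $\sigma$-coefficients vanish and the on-wall coefficient $c_v = -1 < 0$. By Proposition \ref{ratwallconv} this crossing is strictly convex with respect to $v$ and flat with respect to each vertex of $\sigma$. Geometrically, the union $F_a \cup F_b = F$ is a simplex, so it is convex. Thus every newly introduced crossing is convex, which is the ``new convex wall crossings'' claim.

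For crossings with $v$ on the wall, suppose $F = e \cup \sigma$ and $G = e \cup \sigma''$ share the wall $e \cup \tau$, with off-wall vertices $p \in F$ and $q \in G$. Let the wall relation in $\Delta$ be
\[ c_p x_p + c_q x_q + c_a x_a + c_b x_b + \sum_{m \in \tau} c_m x_m = 0, \]
where $c_p, c_q > 0$ and all on-wall coefficients are $\le 0$ by convexity. The crossing between $F_a$ and $G_a$ has wall $\{a, v\} \cup \tau$, so we must eliminate $x_b$ using $x_b = (x_v - (1 - \eta) x_a)/\eta$. This gives new coefficients
\[ c_v' = c_b/\eta, \qquad c_a' = c_a - c_b (1 - \eta)/\eta, \]
with $c_p$, $c_q$, and $c_m$ unchanged. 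Note that $c_a' \le c_a$ requires $c_b \ge 0$, which fails in general. So the correct grouping is $c_a x_a + c_b x_b = \lambda x_v + \mu x_a$ with $\lambda = c_b/\eta$ and $\mu = c_a - (1 - \eta) c_b/\eta$. Here $c_v' = c_b/\eta \le 0$ has the same sign as $c_b$, and it is more negative since $\eta < 1$. The sign of $\mu$ needs a check, and the main obstacle is right here. If $c_b \le 0$ and $c_a \le 0$, then $\mu$ can be positive, and I expect trouble. I would first try the hyperplane formulation of Proposition \ref{convsep} instead. The hyperplane $\Span(\{v\} \cup \tau, p)$ lies inside the pencil of hyperplanes spanned between $\Span(\{a\} \cup \tau, p)$ and $\Span(\{b\} \cup \tau, p)$, because $v$ lies on the segment $\overline{ab}$. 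Non-separation of $a$ and $q$ then follows from the convexity of the original crossing with respect to $b$ and to the on-wall vertices. If this fails, I would restrict to the claim as stated, namely that wall crossings are convex up to flatness, and handle the degenerate flat cases separately.

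For crossings with $v$ off-wall, the wall is unchanged but the off-wall vertex $b$ is replaced by $v$. The wall relation is rescaled, with the coefficient of $x_v$ equal to $c_b/\eta$ after substitution, and the remaining coefficients are shifted by multiples of $c_b$. Only the \emph{magnitudes} change, which explains why these crossings need not become ``more convex''. To show that no new flat crossings appear, I would check that each shifted coefficient is a combination $c_m + t c_b$ with $t \ne 0$ generic only if $c_m = 0$ already. Concretely, the on-wall vertices other than $a$ keep their coefficients up to positive scaling, while $a$'s coefficient changes strictly, and it cannot hit $0$ because $v$ is not in the span of the wall. This is the step I would verify last.
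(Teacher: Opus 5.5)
Your case split and your computations for the internal crossing and for $c_v'$ match the paper's. The step that fails is the one you flagged, and it cannot be repaired, because the statement is false as written.

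For the crossing between $F_a$ and $G_a$ across $\{a,v\}\cup\tau$, your coefficient $\mu = c_a - \frac{1-\eta}{\eta}c_b$ is correct. By Proposition \ref{ratwallconv} and Part 1 of Proposition \ref{convsep}, the condition $\mu \le 0$ \emph{is} the statement that $\Span(\{v\}\cup\tau, p)$ does not separate $a$ and $q$. So the pencil reformulation is the same condition, not a way around it. Convexity of the old crossing with respect to $a$ and $b$ constrains only the two endpoint hyperplanes $\Span(\{b\}\cup\tau,p)$ and $\Span(\{a\}\cup\tau,p)$ of your pencil. As $v$ slides from $b$ toward $a$, the intermediate hyperplane can rotate past $q$.

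Quantitatively, since $c_a, c_b \le 0$:
\begin{itemize}
\item $\mu \le 0$ if and only if $\eta|c_a| \ge (1-\eta)|c_b|$;
\item the symmetric substitution for the crossing $F_b\cup G_b$ gives $c_b - \frac{\eta}{1-\eta}c_a \le 0$ if and only if $\eta|c_a| \le (1-\eta)|c_b|$.
\end{itemize}
So unless $c_a=c_b=0$, exactly one value $\eta = |c_b|/(|c_a|+|c_b|)$ keeps both crossings convex, and that value makes both of them flat. The paper's own proof reaches the opposite conclusion only because it writes the $x_a$-coefficient as $c_a - \frac{1-\eta}{\eta}$, dropping the factor $c_b$.

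Here is a concrete counterexample. In $\mathbb{R}^3$ let $w_0,\ldots,w_4$ be the unit vectors in the $xy$-plane at angles $2\pi j/5$, and let $\Delta$ be the suspension of the pentagon $w_0w_1\cdots w_4$ by $\pm e_3$.
\begin{itemize}
\item Each $|\St_\Delta(w_j)|$ is a planar wedge of angle $4\pi/5$ times the $z$-axis, and $|\St_\Delta(\pm e_3)|$ are half-spaces, so all wall crossings of $\Delta$ are convex.
\item Across the wall $\{w_0,e_3\}$ the wall relation is $x_{w_1}+x_{w_4}-2\cos(2\pi/5)\,x_{w_0}=0$, so $c_{e_3}=0$ and $c_{w_0}<0$.
\item Subdivide $\{e_3,w_0\}$ at $x_v=(1-\eta)x_{e_3}+\eta x_{w_0}$ for any $\eta\in(0,1)$. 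The relation becomes $x_{w_1}+x_{w_4}-\frac{2\cos(2\pi/5)}{\eta}x_v+\frac{2(1-\eta)\cos(2\pi/5)}{\eta}x_{e_3}=0$, which has a positive coefficient at the on-wall vertex $e_3$.
\item Directly: $w_1,w_4\in|\St_{\Delta'}(e_3)|$, but their midpoint, a positive multiple of $w_0$, is not. The facets $\{e_3,v,w_1\}$ and $\{e_3,v,w_4\}$ meet the plane $z=0$ only in the rays of $w_1$ and $w_4$.
\end{itemize}

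What does survive is the following:
\begin{itemize}
\item the internal crossings;
\item in the on-wall case, convexity with respect to $v$ and to the vertices of $\tau$;
\item the off-wall case.
\end{itemize}
In the off-wall case your conclusion is right but your stated reason is not. Only the $a$-coefficient moves, to $c_a-\frac{1-\eta}{\eta}c_b<c_a\le 0$, because $c_b>0$ is an off-wall coefficient there. That strict decrease, not $v\notin\Span$ of the wall, is what rules out new flatness.
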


	\begin{proof}
		We will keep the $k^d$-coordinates of ``old'' vertices from $\Delta$ to $\Delta'$ the same as their ``original'' coordinates. These are all the vertices of $\Delta'$ except the subdividing vertex $v$. \\

		We will first focus on the wall crossings involving the subdividing vertex $v$ that are \emph{not} counterparts of those in $\Delta$. We can split such wall crossings into those that use $v$ as an on-wall vertex and those that use $v$ as an off-wall vertex. The facets of $\Delta'$ containing $v$ correspond to facets of $\Delta$ containing $e = \{ a, b \}$ with one of $a$ or $b$ replaced by $v$. If we consider wall crossings with $a$ replaced by $b$ or $b$ replaced by $a$, the chosen relation $x_v = (1 - \eta) x_a + \eta x_b$ for a fixed $\eta \in (0, 1)$ yields the wall relation \[ (1 - \eta) x_a + \eta x_b - x_v = 0. \] Since the coefficient of $x_v$ is $-1$ and the $x_p$ for the remaining on-wall vertices $p$ have coefficient 0, we have a convex wall crossing in that case (see signs of on-wall vertices on p. 420 -- 421 of \cite{Mat}). \\
		
		Next, we will consider wall crossings in $\Delta'$ using $v$ that are counterparts of those in $\Delta$. Suppose that the starting off-wall vertex of the facet of $\Delta'$ containing $v$ is a vertex $q \in V(\Delta)$ such that $q \ne a, b$. This means that $v$ would be an on-wall vertex. Note that $\lk_{\Delta'}(v) = \Susp_{a, b}(\lk_\Delta(e))$. This would be a counterpart of a wall crossing in $\Delta$ involving 2 facets that both contain $e$. The difference is that one of the two on-wall vertices $a$ or $b$ is replaced by $v$ in $\Delta'$. Suppose that $b$ is replaced with $v$. Since the $k^d$-coordinates of the ``old'' vertices of $\Delta$ (i.e. vertices of $\Delta'$ other than $v$) are assumed to be the same for $\Delta'$, then the wall relation for this wall relation in $\Delta$ with the wall denoted by $\tau$ is \[ c_q x_q + c_{q'} x_{q'} + \sum_{m \in \tau} c_m x_m = 0 \] with $c_q, c_{q'} > 0$ and $c_m \le 0$ for all $m \in \tau$ (assuming $\tau \supset e = \{ a, b \}$). Its counterpart in $\Delta'$ has the wall relation

		\[ c_q x_q + c_{q'} x_{q'} + \sum_{ \substack{ m \in \tau \\ m \ne b } } c_m x_m + c_b \left( \frac{1}{\eta} x_v - \frac{1 - \eta}{\eta} x_a \right) = c_q x_q + c_{q'} x_{q'} + \sum_{ \substack{ m \in \tau \\ m \ne a, b } } c_m x_m + \left( c_a - \frac{1 - \eta}{\eta} \right) x_a + \frac{c_b}{\eta} x_v
		= 0 \]
		
		since \[ x_b = \frac{1}{\eta} (x_v - (1 - \eta) x_a) = \frac{1}{\eta} x_v - \frac{1 - \eta}{\eta} x_a. \]
		
		Note that $c_a - \frac{1 - \eta}{\eta} < 0$ since $c_a \le 0$ and $\frac{1 - \eta}{\eta} > 0$. It is clear that $\frac{c_b}{\eta} \le 0$ since $c_b \le 0$ and $\eta > 0$. Since $0 < \eta < 1$, we actually have that $\frac{c_b}{\eta} \le c_b \le 0$. Thus, the wall relation \[ c_q x_q + c_{q'} x_{q'} + \sum_{ \substack{ m \in \tau \\ m \ne a, b } } c_m x_m + \left( c_a - \frac{1 - \eta}{\eta} \right) x_a + \frac{c_b}{\eta} x_v = 0 \] in $\Delta'$ has strictly positive coefficients for the off-wall vertices and nonpositive coefficients for the on-wall vertices. This means that the associated wall crossing in $\Delta'$ is convex in the sense of Proposition \ref{ratwallconv} and Proposition \ref{convsep}. Note that it is ``even more convex'' than the corresponding wall crossing in $\Delta$ since all the coefficients of the off-wall vertices stay the same and the coefficients of the on-wall rays either stay the same or become even more negative. All of this can be repeated when the starting facet of $\Delta'$ containing $v$ is one that replaces $a$ by $v$ in a facet of $\Delta$ containing $e = \{ a, b \}$. \\
		
		Now consider wall crossings in $\Delta'$ starting with a facet containing $v$ that uses $v$ as an off-wall vertex. This means that the vertex $v$ is replaced by another vertex. Without loss of generality, suppose that this facet comes from replacing $b$ by $v$ in a facet of $\Delta$ containing $e = \{ a, b \}$. Then, we have that the vertex that $v$ is replaced by is the vertex $b' \in V(\Delta)$ that $b$ is replaced by in the counterpart of this wall crossing in $\Delta$. For this latter wall crossing in $\Delta$, we have the wall relation \[ c_b x_b + c_{b'} x_{b'} + \sum_{m \in \tau} c_m x_m = 0 \] with $c_b, c_{b'} > 0$ and $c_m \le 0$ for all $m \in \tau$. Since the chosen relation $x_v = (1 - \eta) x_a + \eta x_b$ implies that $x_b = \frac{1}{\eta} x_v - \frac{1 - \eta}{\eta} x_a$, we have the wall relation \[ c_b \left( \frac{1}{\eta} x_v - \frac{1 - \eta}{\eta} x_a \right) + c_{b'} x_{b'} + \sum_{m \in \tau} c_m x_m = \frac{c_b}{\eta} x_v + c_{b'} x_{b'} + \sum_{ \substack{m \in \tau \\ m \ne a} } c_m x_m + \left( c_a - \frac{1 - \eta}{\eta} c_b \right) x_a = 0 \] in $\Delta'$. Since $0 < \eta < 1$ and $c_b > 0$, we have that $\frac{c_b}{\eta} > c_b > 0$ and $\frac{1 - \eta}{\eta} > 0$. Combining this with $c_a \le 0$, we have that $c_a - \frac{1 - \eta}{\eta} c_b < c_a \le 0$. This implies that the wall crossing we are considering is convex. Again, the reasoning can be repeated when the off-wall vertex $v$ of the starting facet of the wall crossing is taken to replace $a$ by $v$ in a facet of $\Delta$ containing $e = \{ a, b \}$. Since $0 < c_b < \frac{c_b}{\eta}$, the wall crossings are not necessarily ``more convex'' than their counterparts in $\Delta$. However, the decrease in ``unscaled'' coordinates $c_r$ indicates that there are no new flat wall crossings generated.

	\end{proof}

	\begin{defn} \label{contptspaceconvcon}
		Let $\Delta$ be a $(d - 1)$-dimensional simplicial complex with a fixed realization in $k^d$. Suppose that $\Delta$ has convex wall crossings. Consider an edge $e = \{ a, b \} \in \Delta$. The \textbf{contraction point space} of $e$ is the space of possible points $w \in k^d$ replacing $a$ and $b$ such that the resulting contraction of $e$ has convex wall crossings. The edge $e$ is \textbf{convex contractible} if such a point $w$ exists. \\
	\end{defn}

	\color{black}

	The idea for contraction point spaces is that we are looking for suitable contraction points ``completing'' a certain collection of convex sets to larger convex ones compatible with realizations of simplicial complexes. Our first main result states that these are given convex sets whose union with certain convex sets storing local data are convex. It will be expressed in terms of half-spaces. A key input is the hyperplane (non)separation interpretations of convexity from Proposition \ref{convsep}. In particular, we do \emph{not} want separation by designated hyperplanes from centers of stars of vertices we measure convexity with respect to. This yields possible ``completions'' of sets required to be convex in order for local convexity to be satisfied. \\
	
	\color{black}

	\begin{thm} \textbf{(Space of possible contraction points with respect to a fixed edge $e = \{ a, b \}$) \\} \label{contractpointspaceexpress}
		Fix an edge $e = \{ a, b \} \in \Delta$. Suppose that $|\St_\Delta(a)| \cup |\St_\Delta(b)|$ is convex  and consider vertices $w$ on the same side of a hyperplane $\Span(\tau)$ for a wall $\tau \in \lk_\Delta(a)$ (respectively $\eta \in \lk_\Delta(b)$) as $a$ (respectively $b$). Note that the former assumption is necessary in order for the linear contraction point space with respect to $e$ to be nonempty. \\ 
		
		Among such points, the linear contraction point space with respect to $e = \{ a, b \}$ is given by the points of the line segment $\overline{ab}$ in (the interior of) $|\St_\Delta(a)| \cup |\St_\Delta(b)|$ lying in

		\begin{align*}
			\left( \bigcap_{ \substack{ r \in \lk_\Delta(a) \cup \lk_\Delta(b) \\ \text{Facets $F \in \Delta$ such that $F \ni r$ and $F \in \St_\Delta(a)$} \\ H_r^+ \text{ the half space bounded by $H$ that contains $r$} } } \Span(((F \setminus a) \cup a'_F) \setminus r)_r^+ \right) \\ 
			\bigcap \left( \bigcap_{ \substack{ r \in \lk_\Delta(a) \cup \lk_\Delta(b) \\ \text{Facets $F \in \Delta$ such that $F \ni r$ and $F \in \St_\Delta(b)$} \\ H_r^+ \text{ the half space bounded by $H$ that contains $r$} } } \Span(((F \setminus b) \cup b'_F) \setminus r)_r^+ \right) 
		\end{align*}
		
		
		This is the largest subset $A \subset |\St_\Delta(a)| \cup |\St_\Delta(b)|$ such that the realization of $A \cup F$ in $k^d$ is convex for any facet $F \in \Delta$ formed by replacing $a$ or $b$ by $a' \notin \lk_\Delta(a) \cup \lk_\Delta(b)$ or $b' \in \lk_\Delta(a) \cup \lk_\Delta(b)$ (i.e. one wall crossing away from $|\St_\Delta(a)| \cup |\St_\Delta(b)|$) (see Lemma 1.4.1 on p. 16 of \cite{Sch}). \\

	\end{thm}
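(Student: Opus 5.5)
The plan is to translate ``the contraction of $e$ has convex wall crossings'' into a finite list of hyperplane non-separation conditions, using Proposition \ref{convsep}, and then read each condition as membership of $w$ in a half-space. First I will describe the contracted complex $\Delta/e$ combinatorially. Its facets containing the new vertex $w$ are the facets $F \in \St_\Delta(a) \cup \St_\Delta(b)$ with $a$ or $b$ replaced by $w$; the facets containing all of $e$ disappear. All other facets, and their coordinates, are unchanged. So the only wall crossings whose convexity can change are those with at least one facet containing $w$. These fall into two kinds. In the first, $w$ is an on-wall vertex, and the crossing is internal to $\St_{\Delta/e}(w)$. In the second, $w$ is off-wall, and we cross from a facet $(F \setminus a) \cup w$ to the neighbouring facet $(F \setminus a)\setminus\{\text{one vertex}\} \cup a'_F$, one wall crossing away from $|\St_\Delta(a)| \cup |\St_\Delta(b)|$. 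Symmetric statements hold with $b$.

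For the first kind I will use Part 3 of Proposition \ref{convsep}: they are all convex with respect to $w$ iff $|\St_{\Delta/e}(w)|$ is convex. Take $w$ on $\overline{ab}$ with the stated side conditions with respect to the walls $\Span(\tau)$, $\tau \in \lk_\Delta(a)$ or $\lk_\Delta(b)$. Then $|\St_{\Delta/e}(w)|$ is exactly the cone from $w$ over the boundary of $|\St_\Delta(a)| \cup |\St_\Delta(b)|$, so it equals that set. This is where the hypothesis that $|\St_\Delta(a)| \cup |\St_\Delta(b)|$ is convex enters. I will also argue that this hypothesis is necessary: if the union is not convex, then Part 3 fails for every $w$, since the star of $w$ has this union as its support. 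The on-wall conditions with respect to the other vertices $r$ of these internal crossings are inherited from the convexity of the old crossings. The reason is that moving $a$ or $b$ along $\overline{ab}$ changes the wall-relation coefficients in the same controlled way as in the computation of Proposition \ref{edgesubdivwallconv}.

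For the second kind I will apply Part 1 of Proposition \ref{convsep}. The crossing from $(F\setminus a)\cup w$ to its neighbour through the wall not containing $r$ is convex with respect to the on-wall vertex $r$ iff the hyperplane $\Span(((F\setminus a)\cup a'_F)\setminus r)$ does not separate $r$ from $w$. Equivalently, $w$ lies in the closed half-space $\Span(((F\setminus a)\cup a'_F)\setminus r)_r^+$. Running over all $r \in \lk_\Delta(a)\cup\lk_\Delta(b)$ and all relevant $F$, and doing the same for $b$, produces exactly the displayed double intersection. Intersecting with $\overline{ab}$, which comes from the linear interpolation assumption, gives the description of the space. For the maximality statement I will use Part 2 of Proposition \ref{convsep} together with Lemma 1.4.1 of \cite{Sch}. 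The union of $A$ with each neighbouring facet $F$ is convex iff the corresponding crossings are convex with respect to all on-wall vertices, so each point of $A$ must satisfy every half-space condition. The intersection is therefore the largest such $A$.

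The main obstacle I expect is the bookkeeping in the first kind of crossing. I must verify that the side conditions on $w$ really make $|\St_{\Delta/e}(w)|$ coincide with $|\St_\Delta(a)| \cup |\St_\Delta(b)|$, with no folding and no facets overlapping after replacement. I must also check that no further constraints arise beyond those listed. I would try the cone description first, using the non-separation of $w$ from $a$ or $b$ by each outer wall hyperplane.
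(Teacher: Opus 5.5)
Your handling of convexity with respect to $w$ (Part 3 of Proposition \ref{convsep}, which reduces to convexity of $|\St_\Delta(a)| \cup |\St_\Delta(b)|$) matches Lemma \ref{contractpointspace1}. Your handling of the crossings with $w$ off the wall (Part 1, which produces exactly the displayed half-spaces) matches Lemma \ref{contractpointspace2}. There is a small slip there: that crossing goes from $(F\setminus a)\cup w$ to $(F\setminus a)\cup a'_F$ through the wall $F\setminus a$, which \emph{does} contain $r$. The condition you then write down is still the right one.

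The gap is a third family of conditions. A crossing inside $\St_{\widetilde{\Delta}}(w)$ with $w$ on the wall must also be convex with respect to every \emph{other} on-wall vertex $r$. Sufficiency of the displayed intersection is exactly the claim that these impose nothing new on $\overline{ab}$. You justify this by saying the conditions are inherited from the old crossings because coefficients change ``as in Proposition \ref{edgesubdivwallconv}''. That mechanism does not apply here:
\begin{itemize}
\item In that proposition, $b$ is rewritten through $x_v$ and $x_a$, both of which remain in the facets.
\item Here, take the crossing of $(F\setminus a)\cup w$ and $(F'\setminus a)\cup w$ with $F,F'\in\St_\Delta(a)\setminus\St_\Delta(b)$. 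The vertex $b$ lies in neither facet. Eliminating $x_a$ via $x_w=(1-\eta)x_a+\eta x_b$ therefore brings in the coordinates of $x_b$ in the basis $F$, and no wall relation of this crossing controls their signs.
\item The old relation certifies only the endpoint $w=a$. At $w=b$ there is nothing to inherit, since $(F\setminus a)\cup b\notin\Delta$.
\item The crossings of $(F\setminus a)\cup w$ and $(H\setminus b)\cup w$, with $F=\sigma\cup\{a,p\}$, $H=\sigma\cup\{b,q\}$ and $\sigma\in\lk_\Delta(e)$, are new and have no counterpart in $\Delta$ at all.
\end{itemize}

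The paper closes this gap with Lemma \ref{contractpointspace3} and Lemma \ref{contractpointspace4}. For $r\in\lk_\Delta(a)\setminus\lk_\Delta(b)$, convexity with respect to $r$ is rewritten as a half-space condition on $w$: $w\in\Span(F\setminus\{a,r\},m)^+_a$ for $m\in\lk_\Delta(a)\cap\lk_\Delta(r)$. The vertex $a$ satisfies this by the old convexity. For $b$, geometric input is needed:
\begin{itemize}
\item $b\in\lk_\Delta(a)\setminus\lk_\Delta(r)$;
\item $\overline{rb}\subset|\St_\Delta(a)|$;
\item $|\St_\Delta(a)|\cap|\St_\Delta(r)|$ is convex.
\end{itemize}
Hence $\overline{rb}$ leaves $|\St_\Delta(a)|\cap|\St_\Delta(r)|$ through a wall $F\setminus r$ opposite $r$, and this is used to place $b$ on $a$'s side. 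Convexity of half-spaces then gives all of $\overline{ab}$. Note that this inequality must hold for every facet $F\supset\{a,r\}$ and every admissible $m$, not only for the wall that $\overline{rb}$ crosses.

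For the new crossings through $G=\sigma\cup\{a,b\}$, where $r\in\sigma\subset\lk_\Delta(a)\cap\lk_\Delta(b)$, your coefficient approach does work, but by combination rather than substitution. Take $\lambda R_{F\cup G}+\mu R_{G\cup H}$ of the two old wall relations, and require the $(x_a,x_b)$-part to be a multiple of $x_w$. The sign patterns then force $\lambda/\mu>0$, so the coefficients on $\sigma$ stay nonpositive.

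Without these steps, your argument shows only that the displayed intersection is necessary, not that it is sufficient.
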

	
	\begin{cor}
		If the contraction point space of $e = \{ a, b \} \in \Delta$ is nonempty, it is a convex set whose union with a designated collection of convex sets is convex. \\
	\end{cor}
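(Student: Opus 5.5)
The plan is to read the corollary off directly from the description of the contraction point space in Theorem \ref{contractpointspaceexpress}. I would prove two things in turn: first that the space is convex, and then that it is a convex "completion" in the sense of the last paragraph of that theorem.

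\emph{Convexity.} By Theorem \ref{contractpointspaceexpress}, the linear contraction point space of $e = \{ a, b \}$ is the set of points of the segment $\overline{ab}$, lying in $|\St_\Delta(a)| \cup |\St_\Delta(b)|$, that also lie in the displayed intersection of half-spaces $\Span(((F \setminus a) \cup a'_F) \setminus r)_r^+$ and $\Span(((F \setminus b) \cup b'_F) \setminus r)_r^+$.
\begin{itemize}
\item Each closed half-space is convex.
\item The segment $\overline{ab}$ is convex.
\item Since the space is nonempty, the hypothesis of Theorem \ref{contractpointspaceexpress} holds, so $|\St_\Delta(a)| \cup |\St_\Delta(b)|$ is convex.
\end{itemize}
An intersection of convex sets is convex, so the contraction point space is convex. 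If the theorem is read with "interior", I would replace $|\St_\Delta(a)| \cup |\St_\Delta(b)|$ by its interior. The interior of a convex set is convex, so the conclusion is unchanged. Nonemptiness is needed only to make the hypothesis of the theorem available and to keep the statement non-vacuous.

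\emph{Convex union with a designated collection.} Take the designated collection to be the realizations of the facets $F$ described at the end of Theorem \ref{contractpointspaceexpress}. These are the facets obtained by replacing $a$ or $b$ with a vertex one wall crossing away from $|\St_\Delta(a)| \cup |\St_\Delta(b)|$. Each such $F$ is a simplex, hence convex.

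The same theorem characterizes the contraction point space $A$ as the largest subset of $|\St_\Delta(a)| \cup |\St_\Delta(b)|$ for which $A \cup F$ is convex for every such $F$. In particular, $A \cup F$ is convex for each designated $F$.

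If one wants the whole union $A \cup \bigcup F$ rather than each $A \cup F$ separately, I would argue as follows.
\begin{itemize}
\item Every designated $F$ shares a wall with a facet of the star, and the points of $A$ lie on the correct side of every relevant hyperplane.
\item By Part 2 and Part 3 of Proposition \ref{convsep}, convexity then reduces to the non-separation conditions.
\item Those non-separation conditions are exactly the half-space memberships defining $A$.
\end{itemize}

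The main obstacle is pinning down what "designated collection" and "union is convex" mean: pairwise with each $F$, or for the total union. I would state the corollary in the pairwise form, which is literally the last sentence of Theorem \ref{contractpointspaceexpress}. For the total union I would add the reduction through Proposition \ref{convsep} as a remark.
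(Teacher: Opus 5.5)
Your convexity half is correct, and it is exactly how the corollary is meant to follow from Theorem \ref{contractpointspaceexpress} (the paper gives no separate proof): the linear contraction point space is an intersection of $\overline{ab}$, the convex set $|\St_\Delta(a)| \cup |\St_\Delta(b)|$ (or its interior), and closed half-spaces.

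The union half fails as you have set it up. Take a designated facet $F = (F_0 \setminus a) \cup a'_F$ with $F_0 \in \St_\Delta(a) \setminus \St_\Delta(b)$. This is the case that produces the half-spaces of Lemma \ref{contractpointspace2}. Since $\Delta$ is flag and $\{ a, b \} \in \Delta$, the off-wall vertex $a'_F$ opposite to $a$ cannot be $b$, so $F \cap \{ a, b \} = \emptyset$. Realizations of faces meet in the realization of their common face, so $F \cap \overline{ab} = \emptyset$. Hence, for nonempty $A \subset \overline{ab}$, the set $A \cup F$ is covered by the disjoint compact sets $\overline{ab}$ and $F$ and meets both. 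It is therefore disconnected and not convex. Your total union $A \cup \bigcup F$ over such facets is disconnected for the same reason. The last sentence of the theorem cannot be read literally, and quoting it does not establish the clause.

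What Lemma \ref{contractpointspace2} together with Part 2 of Proposition \ref{convsep} actually gives, with $\tau = F_0 \setminus a$, is that $\mathrm{conv}(\tau \cup \{ w \}) \cup F$ is convex for every $w \in A$. The half-spaces $\Span((\tau \setminus r) \cup a'_F)^+_r$ for $r \in \tau$ say precisely that the wall crossing between the new facet $\tau \cup w$ of $\widetilde{\Delta}$ and $F$ is convex with respect to every on-wall vertex. Also, $w$ and $a'_F$ lie on opposite sides of $\Span(\tau)$ by the standing assumption that $w$ is on the same side as $a$. Your appeal to Proposition \ref{convsep} is the right tool, but it applies to this pair of facets, not to $A \cup F$.

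To get a statement about a single set, use the convexity of $A$ from your first half and replace $A$ by $\mathrm{conv}(\tau \cup A)$. Group the $A$-terms of a convex combination. This shows that every point of $\mathrm{conv}(\tau \cup A \cup \{ a'_F \})$ lies either in $F$ or in $\mathrm{conv}(\tau \cup \{ w, a'_F \}) = \mathrm{conv}(\tau \cup \{ w \}) \cup F$ for some $w \in A$. Hence $\mathrm{conv}(\tau \cup A) \cup F$ is convex, and this is the form in which the designated-collection clause is true.
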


	\begin{rem} \textbf{(Duality properties) \\}
		Since $\Delta$ is flag if it has convex wall crossings (Proposition 5.3 on p. 279 of \cite{LR}), we have that $\lk_\Delta(e) = \lk_\Delta(a) \cap \lk_\Delta(b)$. Apart from being a common application of the flagness property, the condition $\lk_\Delta(e) = \lk_\Delta(a) \cap \lk_\Delta(b)$ is also the property needed for contraction of $e = \{ a, b \}$ to preserve the PL homeomorphism type by \cite{Nev}. On the edge subdivision side, recall that $\lk_{\Delta'}(v) = \Susp_{a, b}(\lk_\Delta(e))$. Since $\lk_{\widetilde{\Delta}}(w) = \lk_\Delta(a) \cup \lk_\Delta(b)$ and Theorem \ref{contractpointspaceexpress} expresses convex contractibility as unions of designated convex sets being convex sets, there is a sort of sort of duality property analogous to the relationship between polar convex sets of intersections and unions (Theorem 1.6.3 on p. 33 and Theorem 1.6.9 on p. 35 of \cite{Sch}). \\
	\end{rem}

	We now describe the sets involved in each type of wall crossing involved. \\

	\begin{lem} \textbf{(On wall data: Convexity of wall crossings with respect to $w$) \\} \label{contractpointspace1} 
		Let $\Delta$ be a locally convex simplicial pseudomanifold and $e = \{ a, b \} \in \Delta$ be an edge of $\Delta$. \\

		Recall that $|\St_{\widetilde{\Delta}}(w)| = |\St_\Delta(a)| \cup |\St_\Delta(b)|$. If $|\St_\Delta(a)| \cup |\St_\Delta(b)|$ is convex, then any point $w$ in the interior of $|\St_\Delta(a)| \cup |\St_\Delta(b)|$ induces convex wall crossings between facets of $\widetilde{\Delta}$ containing $w$ with respect to $w$. In particular, any point $w \in \overline{ab}$ is suitable. From a wall crossing convexity perspective, the vertex $w$ and a vertex $p \in \lk_{\widetilde{\Delta}}(w)$ are on the same side with respect to a hyperplane $\Span(F \setminus p)$ from a facet $F \in \Delta$ containing $p$ that is in $\St_\Delta(a) \setminus \St_\Delta(b)$ or $\St_\Delta(b) \setminus \St_\Delta(a)$. \\
	\end{lem}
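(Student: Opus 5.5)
The plan is to reduce everything to Part 3 of Proposition \ref{convsep}, applied to the vertex $w$ in the contracted complex $\widetilde{\Delta}$. By that result, all wall crossings of $\widetilde{\Delta}$ with $w$ on the wall are convex with respect to $w$ if and only if $|\St_{\widetilde{\Delta}}(w)|$ is convex. So the lemma follows once we identify this support with $|\St_\Delta(a)| \cup |\St_\Delta(b)|$ and show that placing $w$ in its interior gives a genuine star.

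\textbf{Step 1.} We first check the identification. We have $\lk_{\widetilde{\Delta}}(w) = \lk_\Delta(a) \cup \lk_\Delta(b)$, and by flagness (Proposition 5.3 of \cite{LR}, since $\Delta$ is locally convex) $\lk_\Delta(e) = \lk_\Delta(a) \cap \lk_\Delta(b)$. The facets of $\St_{\widetilde{\Delta}}(w)$ are then the cones $w * \sigma$ over the facets $\sigma$ of $\lk_\Delta(a) \cup \lk_\Delta(b)$ that do not contain the other endpoint. These are exactly the boundary faces of $|\St_\Delta(a)| \cup |\St_\Delta(b)|$ not containing $a$ or $b$. Hence the boundary of $|\St_{\widetilde{\Delta}}(w)|$ is the same polyhedral sphere as the boundary of $|\St_\Delta(a)| \cup |\St_\Delta(b)|$.

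\textbf{Step 2.} Next we show that, for $w$ in the interior of the convex set $K = |\St_\Delta(a)| \cup |\St_\Delta(b)|$, the cone from $w$ over $\partial K$ tiles $K$ with nondegenerate simplices. Convexity gives that every ray from $w$ meets $\partial K$ exactly once. Each boundary face $\sigma$ spans a supporting-type hyperplane $\Span(\sigma)$ not containing $w$: if it did, $w$ would lie on a supporting hyperplane of the face, contradicting interiority, using the half-space description of $K$ (Lemma 1.4.1 of \cite{Sch}). So the simplices $w * \sigma$ are full-dimensional, meet along faces, and their union is $K$. Therefore $|\St_{\widetilde{\Delta}}(w)| = K$ is convex, and Part 3 of Proposition \ref{convsep} yields convexity of all wall crossings with $w$ on the wall. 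The points of $\overline{ab}$ (away from the endpoints, or the endpoints themselves, which recover the original stars) lie in $K$ by convexity, since $a, b \in K$. Interior points of the edge lie in the interior of $K$ because $e$ is an interior edge of the star union in a pseudomanifold.

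\textbf{Step 3.} Finally, for the hyperplane reformulation, we take a facet $F$ in $\St_\Delta(a) \setminus \St_\Delta(b)$ (the $b$ case is symmetric) and $p \in F$. We will show that $w$ and $p$ lie on the same side of $\Span(F \setminus p)$. When $p = a$, $\Span(F \setminus a)$ is the span of a boundary face of $K$, so $K$ lies in one closed half-space, which contains $a$ and $w$. When $p \neq a$ we read this off Part 1 of Proposition \ref{convsep} applied to the convex wall crossing in $\widetilde{\Delta}$ between the facet $(F \setminus a) \cup w$ and its neighbour, in which $p$ plays the off-wall role.

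The main obstacle is Step 2: the nondegeneracy of the cones and the claim that $\overline{ab}$ meets the interior of $K$. Both rely on the pseudomanifold condition that $e$ is not on the boundary of $K$. I would handle this by noting that every wall through $e$ has exactly two facets, both inside $\St_\Delta(a)$, so a neighbourhood of the interior of $e$ lies in $K$.
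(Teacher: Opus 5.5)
Your Steps 1 and 2 follow the paper's route. The paper's proof applies Part 3 of Proposition \ref{convsep} to $w$ in $\widetilde{\Delta}$, using $|\St_{\widetilde{\Delta}}(w)| = |\St_\Delta(a)| \cup |\St_\Delta(b)|$. Your coning argument (boundary walls of $K$ span supporting hyperplanes that miss an interior $w$, so the simplices $w * \sigma$ tile $K$) actually justifies that identification, which the paper only ``recalls''. One small slip: taking $w = a$ does not recover $\St_\Delta(a)$. You don't need it, though, since $a$ and $b$ are themselves interior points of $K$.

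The genuine problem is Step 3 in the case $p \neq a$. In $\widetilde{\Delta}$, the wall crossing you invoke has wall $\tau = (F \setminus \{a, p\}) \cup w$ and off-wall vertices $p$ and some $p'$. Part 1 of Proposition \ref{convsep}, with center $w$, only says that $\Span(\tau \setminus w, p) = \Span(F \setminus a)$ does not separate $w$ and $p'$. Equivalently, it says $\Span(F \setminus \{a, p\}, p')$ does not separate $w$ and $p$. Neither hyperplane is $\Span(F \setminus p)$. That hyperplane contains $a$, which is no longer a vertex of $\widetilde{\Delta}$, so no convexity statement about $\widetilde{\Delta}$ can produce it.

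In fact the literal claim you set out to prove is false. Take any facet $G \supset e$, and let $F = (G \setminus b) \cup p$ be the other facet through the wall $G \setminus b$. Then $F \in \St_\Delta(a) \setminus \St_\Delta(b)$, and $p$ is a vertex of $\lk_{\widetilde{\Delta}}(w)$. The hyperplane $\Span(F \setminus p) = \Span(G \setminus b)$ contains $a$ and strictly separates the off-wall vertices $p$ and $b$. Hence every $w = (1 - \eta) x_a + \eta x_b$ with $\eta \in (0, 1]$ lies strictly on the side opposite to $p$.

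The last sentence of the lemma has to be read the way the paper's proof uses it, with the boundary hyperplanes $\Span(F \setminus a)$ for $F \in \St_\Delta(a) \setminus \St_\Delta(b)$ and $\Span(F \setminus b)$ for $F \in \St_\Delta(b) \setminus \St_\Delta(a)$. The content is that $w$ and every vertex of $\lk_{\widetilde{\Delta}}(w)$ lie on the same side of these hyperplanes. That is exactly your ``$p = a$'' case, which already follows from Step 2. You should drop the $p \neq a$ case rather than try to prove it.
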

	
	\begin{proof}
		Consider a facet of $F \in \Delta$ that contains exactly one of $a$ or $b$. Then, $w$ is on the same side of the hyperplane $\Span(F \setminus a)$ or $\Span(F \setminus b)$ as the off-wall vertex of the other facet of $\Delta$ containing $F \setminus a$ or $F \setminus b$. Since we assumed that $|\St_\Delta(a)| \cup |\St_\Delta(b)|$ is convex, this implies that $w$ lies inside $|\St_\Delta(a)| \cup |\St_\Delta(b)|$. This was shown in Part 3 of Proposition \ref{convsep}. \\
	\end{proof}

	\begin{lem} \textbf{(Off-wall data: $w$ as an off-wall vertex and convexity of wall crossings with respect to $r \in \lk_\Delta(a) \cup \lk_\Delta(b)$) \\} \label{contractpointspace2} 
		Let $\Delta$ be a locally convex simplicial pseudomanifold and $e = \{ a, b \} \in \Delta$ be an edge of $\Delta$. \\
		
		The space of contraction points $w$ with respect to $e = \{ a, b \}$ that can be off-wall vertices of wall crossings that are convex with respect to a fixed on-wall vertex $r \in \lk_\Delta(a) \cup \lk_\Delta(b)$ can be constructed as follows: Fix $r \in \lk_\Delta(a) \cup \lk_\Delta(b)$ and consider facets $F\in \St_\Delta(r) \cap (\St_\Delta(a) \cup \St_\Delta(b))$. They contain $r$ and at least one of $a$ or $b$. Let $a'_F$ (respectively $b'_F$) be the other vertex of $\Delta$ adjacent to the wall $F \setminus a$ (respectively $F \setminus b$). Then, intersect the half-spaces of points on the same side of the hyperplane $\Span(F \setminus r)$ as $r$ for facets of the form $(F \setminus a) \cup a'$ or $(F \setminus b) \cup b'$. \\

		This is summarized in the intersection below:

		\begin{align*}
			\left( \bigcap_{ \substack{ r \in \lk_\Delta(a) \cup \lk_\Delta(b) \\ \text{Facets $F \in \Delta$ such that $F \ni r$ and $F \in \St_\Delta(a)$} \\ H_r^+ \text{ the half space bounded by $H$ that contains $r$} } } \Span(((F \setminus a) \cup a'_F) \setminus r)_r^+ \right) \\ 
			\bigcap \left( \bigcap_{ \substack{ r \in \lk_\Delta(a) \cup \lk_\Delta(b) \\ \text{Facets $F \in \Delta$ such that $F \ni r$ and $F \in \St_\Delta(b)$} \\ H_r^+ \text{ the half space bounded by $H$ that contains $r$} } } \Span(((F \setminus b) \cup b'_F) \setminus r)_r^+ \right). 
		\end{align*}
		
		\color{black} 
		If the set of facets satisfying the properties described in the intersection is empty for a particular $r \in \lk_\Delta(a) \cup \lk_\Delta(b)$, we will take the trivial intersection (i.e. no condition is imposed and the ``intersection'' is the intersection of the previous sets). \\
		
		Alternatively, the intersection of the half-spaces involved is the largest subset $A \subset |\St_\Delta(a)| \cup |\St_\Delta(b)|$ such that the realization of $A \cup F$ in $k^d$ is convex for any facet $F \in \Delta$ formed by replacing $a$ or $b$ by $a' \notin \lk_\Delta(a) \cup \lk_\Delta(b)$ or $b' \in \lk_\Delta(a) \cup \lk_\Delta(b)$ (i.e. one wall crossing away from $|\St_\Delta(a)| \cup |\St_\Delta(b)|$) (see Lemma 1.4.1 on p. 16 of \cite{Sch}). \\ 
		
		This intersection consists of points of $|\St_\Delta(a)| \cup |\St_\Delta(b)|$ lying in the intersection of the cones centered at $a'$ or $b'$ (vertices opposite from $a$ or $b$ via a single wall crossing) whose rays are those passing through the vertices of the wall. \\
		
	\end{lem}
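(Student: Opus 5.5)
The plan is to reduce the claim to Part 1 of Proposition \ref{convsep}, applied to every wall crossing of the contraction $\widetilde{\Delta}$ in which $w$ is the off-wall vertex. We then read off the resulting half-space conditions one wall crossing at a time.

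First I would list these wall crossings. After contracting $e$, the facets of $\widetilde{\Delta}$ containing $w$ are the images of facets $F \in \St_\Delta(a) \cup \St_\Delta(b)$ with $a$ or $b$ replaced by $w$. The facets containing both $a$ and $b$ collapse, and by flagness (Proposition 5.3 of \cite{LR}) we have $\lk_\Delta(e) = \lk_\Delta(a) \cap \lk_\Delta(b)$, so no new identifications occur. A wall crossing with $w$ off-wall has wall $F \setminus a$ (respectively $F \setminus b$), with $F \ni a$ and $b \notin F$ (respectively the reverse). Its other off-wall vertex is exactly $a'_F$ (respectively $b'_F$), namely the vertex across the wall $F \setminus a$ in $\Delta$, which lies outside $\St_\Delta(a) \cup \St_\Delta(b)$. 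The on-wall vertices $r$ of such a crossing are precisely the elements of $F \setminus a \subset \lk_\Delta(a)$ (respectively of $F \setminus b$), so $r$ ranges over $\lk_\Delta(a) \cup \lk_\Delta(b)$.

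Second, I would fix one such crossing and one on-wall vertex $r$. By Part 1 of Proposition \ref{convsep}, taking the off-wall vertices to be $a'_F$ and $w$, convexity with respect to $r$ holds iff the hyperplane $\Span(((F \setminus a) \cup a'_F) \setminus r)$ does not separate $r$ and $w$, with flatness allowed on the hyperplane. That is, $w$ must lie in the closed half-space $\Span(((F \setminus a)\cup a'_F)\setminus r)_r^+$. Intersecting over all $r$ and all admissible $F$, and doing the same on the $b$ side, gives exactly the displayed intersection. If no facet qualifies for a given $r$, no condition arises, which justifies the convention of taking the trivial intersection. This establishes the first description.

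Third, for the reformulation as the largest $A$ with $A \cup F'$ convex, where $F' = (F\setminus a)\cup a'_F$, I would use the argument of Part 2 of Proposition \ref{convsep}. Convexity of the union of the simplex $F'$ with $\mathrm{conv}(F' \setminus a'_F \cup \{w\})$ is equivalent to each facet hyperplane of $F'$ through $a'_F$ not separating $w$ from the opposite vertex. Those facet hyperplanes are exactly $\Span(F' \setminus r)$ for $r \in F \setminus a$, so the same half-spaces appear, and maximality follows from Lemma 1.4.1 of \cite{Sch}. The cone description is a rewording of the same fact: the intersection over $r$ of these half-spaces, all bounded by hyperplanes through $a'_F$, is the cone with apex $a'_F$ spanned by rays through the vertices of the wall $F\setminus a$. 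Points of $|\St_\Delta(a)|\cup|\St_\Delta(b)|$ lie beyond the wall within that cone.

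The main obstacle I expect is the first step, namely verifying that the list of crossings is complete and correct. In particular I need the opposite vertex $a'_F$ not to coincide with $b$ or another vertex of $\St_\Delta(b)$, since in that case the crossing disappears or becomes an on-wall crossing for $w$ in $\widetilde{\Delta}$. I would handle this using flagness and the pseudomanifold condition that each wall lies in exactly two facets, and treat separately the facets containing both $a$ and $b$, whose walls $F\setminus a$ become internal to $\St_{\widetilde{\Delta}}(w)$ and are covered by Lemma \ref{contractpointspace1}.
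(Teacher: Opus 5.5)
Your main line is the same as the paper's. The wall crossings of $\widetilde{\Delta}$ with $w$ off-wall come from crossings of $\Delta$ across a wall $F\setminus a$ with $F\in\St_\Delta(a)\setminus\St_\Delta(b)$ (or with $a$ and $b$ swapped). Convexity with respect to each on-wall $r$ is then read off from Proposition \ref{convsep}. Citing Part 1 is the directly applicable criterion, and it gives $\Span(((F\setminus a)\cup a'_F)\setminus r)^+_r$ at once; the paper cites Part 3 and phrases the condition through $a'_F$ rather than $w$.

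The step you single out as the main obstacle rests on a false claim. The vertex $a'_F$ need not lie outside $\St_\Delta(a)\cup\St_\Delta(b)$. In the boundary of the cross polytope with $e=\{1,2\}$ (Example \ref{contrcrosspolytop}), every facet $F\ni 1$ with $2\notin F$ has $a'_F=-1\in\lk_\Delta(2)$. So you cannot prove that $a'_F$ avoids $\St_\Delta(b)$, but you also do not need to. Flagness gives what is actually true: $a'_F\notin\lk_\Delta(a)$ and $a'_F\neq b$. The second holds because $a'_F=b$ would put $F\setminus a$ in $\lk_\Delta(a)\cap\lk_\Delta(b)=\lk_\Delta(e)$, producing a face $(F\setminus a)\cup\{a,b\}$ with $d+1$ vertices. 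Once $a'_F\neq b$, the facet $(F\setminus a)\cup a'_F$ contains neither $a$ nor $b$. It therefore passes unchanged to $\widetilde{\Delta}$, whether or not $a'_F$ is adjacent to $b$. The crossing in $\widetilde{\Delta}$ is then exactly $(F\setminus a)\cup w$ against $(F\setminus a)\cup a'_F$; it neither disappears nor puts $w$ on the wall.

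For the two reformulations, which the paper states without argument, be careful about what you actually establish. Your Part 2 argument gives a pointwise statement: $w$ satisfies the half-space conditions attached to $F$ iff the realization of $(F\setminus a)\cup a'_F$ together with that of $(F\setminus a)\cup w$ is convex. That is the right content, but it is not the set-level claim that $A\cup F'$ is convex for the whole intersection $A$, and that claim can fail. Take the fan in $\mathbb{R}^2$ with rays at $0^\circ,60^\circ,\dots,300^\circ$, with $a,b$ at $60^\circ,120^\circ$, $x$ at $0^\circ$ and $a'_F$ at $300^\circ$. The intersection is then the cone spanned by $a$ and $b$, and its union with the cone over $\{x,a'_F\}$ is not convex.

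Likewise, your cone identification is not correct in the paper's linear setting. In the basis $F'=(F\setminus a)\cup a'_F$ one has $\bigcap_{r\in F\setminus a}\Span(F'\setminus r)^+_r=k\,x_{a'_F}+\sum_{r\in F\setminus a}k_{\ge 0}\,x_r$. This is a wedge containing the whole line through $x_{a'_F}$, including the origin. The cone with apex $a'_F$ through the wall vertices is only its intersection with the affine hyperplane on which the $F'$-coordinates sum to $1$.
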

	
	\begin{proof}
		The wall crossings in $\widetilde{\Delta}$ using $w$ as an off-wall vertex are formed by replacing wall $a$ or $b$ with $w$ in wall crossings of $\Delta$ where $a$ or $b$ is used as an off-wall vertex. The intersections above come from thinking about the convexity of such wall crossings with respect to on-wall vertices $r \in \lk_\Delta(a) \cup \lk_\Delta(b)$. Without loss of generality, suppose that $r \in \lk_\Delta(a)$. Consider a facet $F \in \St_\Delta(a) \setminus \St_\Delta(b)$ containing $r$. By Part 3 of Proposition \ref{convsep}, the convexity with respect to $r$ of the wall crossing to $(F \setminus a) \cup a'_F$ is expressed as $a'_F$ being on the same side of $\Span(F \setminus r)$ as $r$. \\
	\end{proof}

	\begin{lem} \textbf{($w$ as an on-wall vertex and convexity with respect to $r \in \lk_\Delta(a) \cup \lk_\Delta(b)$) \\} \label{contractpointspace3}
		Let $\Delta$ be a locally convex simplicial pseudomanifold and $e = \{ a, b \} \in \Delta$ be an edge of $\Delta$. \\
		
		Suppose that $\Delta$ has convex wall crossings (which implies that $\Delta$ is flag) and the vertex $w \in \Delta$ lies on the same side as a fixed vertex $r \in \lk_\Delta(a) \setminus \lk_\Delta(b)$ with respect to the hyperplanes spanned by facets of $\lk_\Delta(r)$. Assume that $|\St_\Delta(a)|$, $|\St_\Delta(b)|$, and $|\St_{\widetilde{\Delta}}(w)|$ are convex. Then, the space of contraction points $w$ compatible with convexity of wall crossings with respect to vertices $r \in \lk_\Delta(a) \cup \lk_\Delta(b)$ adjacent to $w$ that use $w$ as another on-wall vertex are given by the following intersection: \[ \bigcap_{ \substack{m \in \lk_\Delta(a) \cap \lk_\Delta(r) \\ m \notin \Span(F \setminus  \{ a, r \} ) \\ H_r^+ \text{ contains } r } } \Span( F \setminus \{ a, r \},  m)_r^- = \bigcap_{ \substack{m \in \lk_\Delta(a) \cap \lk_\Delta(r) \\ m \notin \Span(F \setminus  \{ a, r \} ) \\ H_a^+ \text{ contains } a } } \Span( F \setminus \{ a, r \},  m)_a^+   \] 
		
		If $m \in \Span(F \setminus \{ a, r \})$ for all $m \in \lk_\Delta(a) \cap \lk_\Delta(r)$, then there are no additional conditions beyond those implied by convexity of $|\St_\Delta(a)|$ and $|\St_\Delta(w)|$. \\
		
		For $r \in \lk_\Delta(b) \setminus \lk_\Delta(a)$, the same statement holds with $a$ and $b$ switched. \\ 
		
		Finally, consider the case where $r \in \lk_\Delta(a) \cap \lk_\Delta(b)$. Note that a wall crossing using $r$ and $a$ or $b$ as an on-wall vertex is a wall crossing of facets in $\St_\Delta(a) \cup \St_\Delta(b)$ since the wall contains at least one of $a$ or $b$. Then, any linear interpolation $w$ of $a$ and $b$ (i.e. $x_w = (1 - \eta) x_a + \eta x_b$ in $k^d$ with $\eta \in (0, 1)$) preserves convexity of the support of the star around $r$ (i.e. $|\St_{\widetilde{\Delta}}(r)|$ still convex). \\

	\end{lem}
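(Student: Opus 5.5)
The plan is to reduce every condition in the statement to the hyperplane (non)separation criterion of Part 1 of Proposition \ref{convsep}, applied to the wall crossings of the contraction $\widetilde{\Delta}$ that have $w$ as an \emph{on-wall} vertex together with a fixed on-wall vertex $r$. First identify these wall crossings combinatorially. Take $r \in \lk_\Delta(a) \setminus \lk_\Delta(b)$. By flagness (Proposition 5.3 of \cite{LR}) we have $\lk_\Delta(e) = \lk_\Delta(a) \cap \lk_\Delta(b)$, so a facet of $\widetilde{\Delta}$ containing both $w$ and $r$ has the form $(F \setminus a) \cup w$ for a facet $F \in \St_\Delta(a) \cap \St_\Delta(r)$ not containing $b$. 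A wall containing $\{w, r\}$ is then $(F \setminus \{a, m'\}) \cup w$. The off-wall vertices are $m'$ and the vertex $m$ on the other side, and $m$ again lies in $\lk_\Delta(a) \cap \lk_\Delta(r)$.

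By Part 1 of Proposition \ref{convsep}, convexity with respect to $r$ holds exactly when $\Span(\tau \setminus r, m)$ does not separate $r$ from $m'$, where $\tau \setminus r = (F \setminus \{a, r, m'\}) \cup w$. Use the equivalent form in that Part: the hyperplane spanned by the other on-wall vertices and one off-wall vertex must not separate $r$ from the other off-wall vertex. Rewrite this as a condition on $w$ alone. Fix the $(d-2)$-dimensional data $F \setminus \{a, r\}$ together with $m$; since $m$ and $m'$ both lie in $F \setminus \{a,r\}$ or adjacent to it, this data spans a hyperplane $\Span(F \setminus \{a, r\}, m)$. Convexity of the wall crossing with respect to $r$ then becomes the requirement that $w$ lies in the closed half-space bounded by $\Span(F\setminus\{a,r\}, m)$ that does \emph{not} contain $r$. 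This step is the main obstacle, because it requires a careful bookkeeping of which vertices are on the wall after replacing $a$ by $w$. I would check it by writing the wall relation of Proposition \ref{ratwallconv} for the crossing in $\widetilde{\Delta}$ and reading off the sign of the coefficient of $x_r$.

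Intersecting over all admissible $m$ gives the left-hand intersection. To obtain the right-hand form, observe that in $\Delta$ itself the same wall crossing, with $a$ in place of $w$, is convex with respect to $r$, because $\Delta$ has convex wall crossings. Hence $a$ lies in $\Span(F\setminus\{a,r\},m)_r^-$, so the $r^-$ side and the $a^+$ side coincide whenever $m \notin \Span(F\setminus\{a,r\})$. When $m \in \Span(F \setminus \{a, r\})$, the hyperplane degenerates: it contains $r$'s complementary data in a way that imposes no constraint beyond $w$ staying in the star, which is guaranteed by the assumed convexity of $|\St_\Delta(a)|$ and $|\St_{\widetilde\Delta}(w)|$ via Part 3 of Proposition \ref{convsep}. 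This gives the degenerate clause. The case $r \in \lk_\Delta(b) \setminus \lk_\Delta(a)$ follows by symmetry.

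For $r \in \lk_\Delta(a) \cap \lk_\Delta(b)$, I would note that every relevant wall contains $a$ or $b$, so the crossing lies in $\St_\Delta(a) \cup \St_\Delta(b)$. Substitute $x_w = (1-\eta)x_a + \eta x_b$ into the wall relations, exactly as in the proof of Proposition \ref{edgesubdivwallconv}. The coefficient of $x_r$ becomes a convex combination of the old nonpositive coefficients and so stays nonpositive. Hence $|\St_{\widetilde{\Delta}}(r)|$ remains convex by Part 3 of Proposition \ref{convsep}.
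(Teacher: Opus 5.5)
Your identification of the crossings is correct and matches the paper's starting point. Take a facet $F\supset\{a,r\}$ with $b\notin F$ and a vertex $m'\in F\setminus\{a,r\}$. The crossing across $(F\setminus\{a,m'\})\cup w$ has off-wall vertices $m'$ and some $m\in\lk_\Delta(a)\cap\lk_\Delta(r)$, with $m\neq b$ since $r\notin\lk_\Delta(b)$. Convexity with respect to $r$ says that $\Span(F\setminus\{a,r\},w)$ does not separate $r$ and $m$.

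\textbf{Gap 1: the exchange step has no sign control.} The gap is the step you flag yourself: swapping the roles of $w$ and $m$ between the hyperplane and the separated pair. Put $B=F\setminus\{a,r,m'\}$ and write the new wall relation as $c_{m'}x_{m'}+c_mx_m+c_rx_r+c_wx_w+\sum_{t\in B}c_tx_t=0$ with $c_{m'},c_m>0$. In the basis $B\cup\{m',m,r\}$, the $r$- and $m$-coordinates of $x_w$ are $-c_r/c_w$ and $-c_m/c_w$. So $c_r\le 0$ is equivalent to $w\in\Span(F\setminus\{a,r\},m)^-_r$ only when $c_w<0$. When $c_w>0$ it picks out the opposite half-space. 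Reading off the sign of the $x_r$-coefficient, as you propose, therefore decides nothing. The observation that $F\setminus\{a,r\}$ and $m$ span a hyperplane does not touch the sign either.

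The missing input is exactly where the paper uses the hypotheses:
\begin{itemize}
\item convexity of $|\St_{\widetilde{\Delta}}(w)|$ puts $m$ on the $w$-side of $\Span(F\setminus a)$;
\item convexity of $|\St_\Delta(a)|$ puts $m$ on the $a$-side;
\item hence, when $m\notin\Span(F\setminus a)$, the $m$-coefficient of $x_w$ in the $((F\setminus a)\cup m)$-basis is positive, i.e.\ $c_w<0$. This coefficient is the ratio of the $a$-coordinates of $x_w$ and $x_m$ in the $F$-basis.
\end{itemize}
Only then is separation equivalent to $x_w=\delta+\xi_mx_m+\xi_rx_r$ with $\delta\in\Span(F\setminus\{a,r\})$ and $\xi_m,\xi_r>0$, which the paper proves in both directions.

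You invoke these hypotheses only in the degenerate clause, where they are not what matters. If $m\in\Span(F\setminus\{a,r\})$, then $\Span(F\setminus\{a,r\},m)$ is not a hyperplane at all. The real reason there is no condition is that the old wall relation then involves neither $a$ nor $r$, so it is also the new relation and is flat with respect to $r$. The same nondegeneracy is hidden in your passage to the right-hand side. $x_a\in\Span(F\setminus\{a,r\},m)^-_r$ needs $m\notin\Span(F\setminus a)$. The identification $H^-_r=H^+_a$ breaks down when the crossing in $\Delta$ is flat with respect to $r$, because $a$ then lies on the hyperplane.

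\textbf{Gap 2: the case $r\in\lk_\Delta(a)\cap\lk_\Delta(b)$.} Your argument for this case does not work. In Proposition \ref{edgesubdivwallconv} the substitution is legitimate because every facet containing $v$ still contains $a$ or $b$, so an old relation is rewritten inside the new vertex set. After a contraction, no facet containing $w$ contains $a$ or $b$. Eliminating $x_a$ in favour of $x_w$ therefore brings in $x_b$, which is not a vertex of the new crossing.

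Take adjacent facets $F,F'\supset\{a,r\}$ with $b\notin F\cup F'$. The crossing of $(F\setminus a)\cup w$ and $(F'\setminus a)\cup w$ is governed by the same half-space condition as above. The vertex $a$ satisfies it, but nothing controls which side $x_b$ is on. Since the $r$-coordinate of $x_w$ is $(1-\eta)$ times that of $x_a$ plus $\eta$ times that of $x_b$, it can turn positive as $\eta$ grows.

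A concrete configuration in $k^4$: let $\lk_\Delta(r)$ be the octahedral sphere with antipodal pairs $\{a,c\}$, $\{b,\beta\}$, $\{m,m'\}$, and set
\begin{itemize}
\item $x_r=(0,0,-\tfrac15,1)$, $x_a=(0,0,1,1)$, $x_c=(0,0,-1,1)$;
\item $x_b=(-1,0,-\tfrac12,1)$, $x_\beta=(1,0,0,1)$;
\item $x_m=(0,1,0,1)$, $x_{m'}=(0,-1,0,1)$.
\end{itemize}
Then $|\St_\Delta(r)|$ is convex. The crossing across $\{r,a,\beta\}$ has relation $3x_{m'}+3x_m-5x_r-x_a=0$, which is convex. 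For $\eta=\tfrac45$, the crossing across $\{r,w,\beta\}$ has relation $\tfrac25x_{m'}+\tfrac25x_m+x_r-\tfrac45x_\beta-x_w=0$. This is strictly convex with respect to $w$ but strictly concave with respect to $r$.

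Your averaging idea does work for the genuinely new crossings between $(H\setminus\{a,b\})\cup\{a',w\}$ and $(H\setminus\{a,b\})\cup\{b',w\}$ with $H\supset\{a,b,r\}$. There a positive combination of the old relations across $H\setminus a$ and $H\setminus b$ has nonpositive $r$-coefficient. It does not work for the crossings above. The paper does not close this gap either: its argument for this case appeals to the convexity of $|\St_{\widetilde{\Delta}}(r)|$, which is what is to be shown.
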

	
	\begin{proof}
		Apart from the individual wall crossings with $w$ as an off-wall vertex, we need to consider compatibility between convexity of different types of wall crossings. This includes instances where $w$ itself is an on-wall vertex and conditions required for this to be compatible with convexity with respect to other vertices of the wall. In particular, we are considering convexity of wall crossings with respect to $r \in \lk_\Delta(a)$ (respectively $r \in \lk_\Delta(b)$) involving facets $F \in \Delta$ containing $a$ (respectively $b$). Note that $a$ (respectively $b$) lies in the half-spaces in the statement. Assume without loss of generality that $r \in \lk_\Delta(a)$.  Consider facets $F \in \Delta$ such that $F \supset \{ r, a \}$. By Proposition \ref{convsep}, this is equivalent to the statement that $\Span(F \setminus \{ a, r \}, w)$ does \emph{not} separate the vertices $m$ and $r$ for any $m \in \lk_\Delta(a) \cap \lk_\Delta(r)$.  \\

		Suppose that $r \notin \lk_\Delta(a) \cap \lk_\Delta(b)$. Since $F \supset \{ r, a \}$, we have $r \in \lk_\Delta(a) \setminus \lk_\Delta(b)$. The proof for $r \in \lk_\Delta(b) \setminus \lk_\Delta(a)$ is the same with $a$ and $b$ switched. Recall that $w$ is chosen to lie on the same side as $r$ with respect to the hyperplanes spanned by facets of $\lk_\Delta(r)$ (which are walls of $\Delta$ opposite to $r$) that do \emph{not} contain $a$. In order for a hyperplane of the form $\Span(F \setminus \{ r, a \}, w)$ to separate $m$ and $r$, the intersection point with the line segment $\overline{mr}$ must come from the sum of an element of $\Span(F \setminus \{ r, a \})$ and a \emph{positive} multiple of $w$. This is because $\Span(F \setminus \{ r, a \})$ does \emph{not} intersect $\overline{mr}$ (faces intersect on faces) and the intersection point would lie inside $|\St_\Delta(r)|$. The latter condition forces a nonnegative multiple of $w$ to be involved since we would otherwise have a point that is on the opposite side of $r$ with respect to hyperplanes spanned by facets of $\lk_\Delta(r)$ that contain $F \setminus \{ r, a \}$. This can be stated more explicitly in terms of the sign of the inner product with $r^*_{(F \setminus a) \cup a'}$, where the dual is taken with respect to the vertices of $(F \setminus \{ r, a \}) \cup a'$, where $a'$ is a vertex opposite to $a$ (i.e. $r$-coefficient in the expansion with respect to this basis). Suppose that \[ \alpha + c_w x_w = (1 - \beta) x_m + \beta x_r \] holds for some $\alpha \in \Span(F \setminus \{ a, r \})$ and $\beta \in (0, 1)$. The convexity of $|\St_{\widetilde{\Delta}}(r)|$ implies that $\langle r^*_{(F \setminus a) \cup a'}, w \rangle \ge 0$ and $\langle r^*_{(F \setminus a) \cup a'}, m \rangle \ge 0$. In addition, having $\beta \ne 0$ implies that $c_w \ne 0$ since the right hand side is \emph{not} contained in $\Span(F \setminus  \{ a, r \})$.
		Since $\langle r^*_{(F \setminus a) \cup a'}, \alpha \rangle = 0$, $1 - \beta > 0$, $\beta > 0$, and $\langle r^*_{(F \setminus a) \cup a'}, r \rangle = 1$, we are forced to have $c_w > 0$. \\

		The coefficient of $r$ in the basis expansion of $w$ from vertices of $(F \setminus a) \cup a' \ni r$ is nonnegative under the initial assumptions on $w$ applied to the hyperplane $\Span(F \setminus \{ r, a \}, a')$. In order for an element of $\Span(F \setminus \{ a, r \}, w )$ to separate the points $m$ and $r$ (i.e. intersect the interior of the line segment $\overline{mr}$), this coefficient must be strictly positive. \\
		
		If $m \in \Span(F \setminus \{ a, r \} )$, separation of the vertices $m$ and $r$ by the linear subspace $\Span(F \setminus \{ a, r \}, w)$ reduces to an identity of the form \[ \mu + c_w x_w = \beta x_r \] for some $\mu \in \Span(F \setminus \{ a, r \} )$, $c_w > 0$, and $\beta \in (0, 1)$ after replacing $\alpha - (1 - \beta) x_m$ by $\mu$. This would mean that $x_w \in \Span(F \setminus a)$ since \[ c_w x_w = -\mu + \beta x_r. \] However, this is impossible since realizations of vertices of a face of a simplicial complex are linearly independent. Thus, the case where $m \in \lk_\Delta(a) \cap \lk_\Delta(r)$ and $m \in \Span(F  \setminus \{  a, r \} )$ does \emph{not} yield any examples where the linear subspace $\Span(F \setminus \{ a, r \}, w)$ separates the vertices $m$ and $r$. \\
		
		It remains to consider cases where $m \notin \Span( F \setminus \{ a, r \} )$. \\

		Even in the ``good'' cases where the contraction point $w$ preserves convexity of wall crossings, the initial assumptions on half-spaces containing $w$ force a positive coefficient of $m$ if $m \notin \Span(F \setminus a)$. Since $|\St_{\widetilde{\Delta}}(w)|$ is taken to be convex, we have that $m$ must lie on the same side of the hyperplane $\Span(F \setminus a)$ as $w$. In particular, the $a$-coefficient of $w$ with respect to the $F$-basis expansion must be strictly positive since realizations of vertices of a face of a simplicial complex are linearly independent and this coefficient is nonnegative by convexity of $|\St_\Delta(w)|$. The convexity of $|\St_\Delta(a)|$ implies that the same is true for $m$. If $m \notin \Span(F \setminus a)$, this implies that the $m$-coefficient of $w$ with respect to the $((F \setminus a) \cup m)$-basis expansion is strictly positive. Assuming initial convexity conditions on $|\St_\Delta(a)|$ and $|\St_\Delta(w)|$, this indicates that the question about separation of $m$ and $r$ is about the sign of the coefficient of $r$ with respect to the $((F \setminus a) \cup m)$-basis of $k^d$. \\ 
		
		We can describe the ``problematic contraction points $w$'' more concretely in terms of linear combinations of ($k^d$-coordinates of) vertices of $\Delta$. The coefficient of $w$ in the linear combination of vertices of $(F \setminus \{ a, r \}) \cup w$ equal to a point on the line segment $\overline{mr}$ must be \emph{strictly} positive since none of the walls of $\lk_\Delta(r)$ containing $F \setminus \{ a, r \}$ span hyperplanes separating $m$ and $r$. Then, the intersection point of the hyperplane $\Span(F \setminus \{ a, r \}, w)$ and the line segment $\overline{mr}$ can be expressed as \[ \alpha + c_w x_w = (1 - \beta) x_m + \beta x_r \] for some $\alpha \in \Span(F \setminus \{ a, r \})$, $c_w > 0$, and $\beta \in (0, 1)$. \\

		Rearranging terms, we have that \[ c_w x_w = -\alpha + (1 - \beta) x_m + \beta x_r. \] Since $c_w > 0$, this implies that the coefficient of $r$ in the expansion of $w$ with respect to this basis is strictly positive. We can think about this as $w$ being on the same side of the hyperplane $\Span(F \setminus \{ a, r \}, m)$ as $r$. \\

		Next, we consider the ``converse'' question of implications of such linear relations. Suppose that  \[ x_w = \delta + \xi_m x_m + \xi_r x_r \] for some $\delta \in \Span(F \setminus \{ a, r \})$, $\xi_m > 0$, and $\xi_r > 0$. Then, we have that 
		
		\begin{align*}
			x_w - \delta &= \xi_m x_m + \xi_r x_r \\
			\Longrightarrow \frac{1}{\xi_m + \xi_r} x_w - \frac{1}{\xi_m + \xi_r} \delta &= \frac{\xi_m}{\xi_m + \xi_r} x_m + \frac{\xi_r}{\xi_m + \xi_r} x_r.
		\end{align*}
		
		Since $\frac{\xi_m}{\xi_m + \xi_r}, \frac{\xi_r}{\xi_m + \xi_r} \in (0, 1)$ and $\frac{\xi_m}{\xi_m + \xi_r} + \frac{\xi_r}{\xi_m + \xi_r} = 1$, this implies that $\frac{1}{\xi_m + \xi_r} x_w - \frac{1}{\xi_m + \xi_r} \delta$ is a point on the interior of the line segment $\overline{mr}$ and the hyperplane $\Span(F \setminus \{ a, r \}, w)$ separates $m$ and $r$. \\
		
		The ``problematic'' cases above are instances where $w$ is on the same side of $\Span(F \setminus \{ a, r \}, m)$ as $r$ for some $m \in \lk_\Delta(a) \cap \lk_\Delta(r)$. Such vertices $m$ are the only vertices where $\overline{mr}$ could be separated by the hyperplane $\Span(F \setminus \{ a, r \}, m)$. This is because at most one side of any particular wall of $\Delta$ containing $r$ (e.g. $(F \setminus \{ a, r \}) \cup r = F \setminus a$) is ``open'' (i.e. not already ``occupied''). Note that at most one of the two vertices adjacent to a wall is equal to $a$ or $b$ (which are \emph{not} opposite to each other since opposite off-wall vertices of a wall crossing cannot be adjacent when $\Delta$ is flag).  \\

		Finally, we consider the case $r \in \lk_\Delta(a) \cap \lk_\Delta(b)$. Assume that $x_w = (1 - \eta) x_a + \eta x_b$ for some $\eta \in (0, 1)$. Suppose that there is a hyperplane of the form $\Span(F \setminus \{ a, r \}, w)$ or $\Span(H \setminus \{ b, r \}, w)$ separating two vertices $m$ and $r$ with $m \in \lk_{\widetilde{\Delta}}(r)$. Note that $F \setminus a \in \lk_\Delta(a)$ implies that $F \setminus a \in \lk_{\widetilde{\Delta}}(w)$. We will focus on the first case since the second case uses the same reasoning. The convexity of $|\St_{\widetilde{\Delta}}(r)|$ implies that the hyperplane $\Span(F \setminus \{ a, r \}, w)$ does \emph{not} separate $r$ and any vertex of $\lk_{\widetilde{\Delta}}(r)$. Note that the vertices of $\lk_{\widetilde{\Delta}}(r)$ consist of vertices of $\lk_\Delta(r) \setminus \{ a, b \}$ and $w$. Since $|\St_{\widetilde{\Delta}}(r)| \subset |\St_\Delta(r)|$, we have covered all the possible vertices $m \in \lk_\Delta(r)$ such that the hyperplane $\Span( F \setminus \{ a, r \}, w)$ could separate $m$ and $r$. \\

	\end{proof}
	
	\pagebreak 
	
	\begin{lem} \textbf{(Redundancy from $w$ as an on-wall vertex that is not the ``center of convexity'') \\} \label{contractpointspace4} 
		Let $\Delta$ be a locally convex simplicial pseudomanifold and $e = \{ a, b \} \in \Delta$ be an edge of $\Delta$.  \\
		
		The convexity of $|\St_\Delta(p)|$ for vertices $p \in \Delta$ implies that $b$ lies in the half-spaces from Lemma \ref{contractpointspace3} associated to $r \in \lk_\Delta(a) \setminus \lk_\Delta(b)$ and $a$ lies in the analogous half-spaces associated to $r \in \lk_\Delta(b) \setminus \lk_\Delta(a)$. In particular, this implies that any point on the line segment $\overline{ab}$ lies in the half-spaces from Lemma \ref{contractpointspace3}. \\
	\end{lem}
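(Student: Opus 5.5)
The plan is to check directly that $b$ lies in every half-space $\Span(F \setminus \{ a, r \}, m)_a^+$ from Lemma \ref{contractpointspace3}, for a fixed $r \in \lk_\Delta(a) \setminus \lk_\Delta(b)$. The key point is that these hyperplanes come from facets of $\St_\Delta(a)$, so $b$ should be a vertex of $\lk_\Delta(a)$ controlled by local convexity at $a$. The statement for $r \in \lk_\Delta(b) \setminus \lk_\Delta(a)$ then follows by swapping $a$ and $b$. Once both endpoints lie in every relevant half-space, the claim about $\overline{ab}$ follows because an intersection of half-spaces is convex.

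Fix a facet $F \supset \{ a, r \}$ and $m \in \lk_\Delta(a) \cap \lk_\Delta(r)$ with $m \notin \Span(F \setminus \{ a, r \})$. The set $\sigma = (F \setminus \{ a, r \}) \cup m$ spans the hyperplane in question.

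First I would show that $\sigma \cup a$ is a facet of $\St_\Delta(a)$, or at least that $\sigma$ is a face of $\lk_\Delta(a)$. I would use flagness of $\Delta$ (Proposition 5.3 of \cite{LR}, valid since $\Delta$ has convex wall crossings). Its vertices are pairwise adjacent and adjacent to $a$, provided $m$ is adjacent to the vertices of $F \setminus \{ a, r \}$. Where this fails, I would instead take the facet $(F \setminus r) \cup m'$ obtained by the wall crossing across $F \setminus r$ and argue with that wall.

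In the good case, $\Span(\sigma)$ is the hyperplane spanned by a wall of $\St_\Delta(a)$ opposite to $r$, or of a facet of $\lk_\Delta(a)$. By Part 3 of Proposition \ref{convsep}, convexity of $|\St_\Delta(a)|$ says every vertex of $\lk_\Delta(a)$ lies weakly on the same side of $\Span(\sigma, a)$-type hyperplanes as the star. Restricting to the hyperplane $\Span(\sigma)$ through the link, the whole support $|\St_\Delta(a)|$ lies in one closed half-space of any hyperplane spanned by a boundary wall. Since $a \in |\St_\Delta(a)|$ is strictly inside and $b \in \lk_\Delta(a)$ is in the support, $b$ lies in the closed half-space containing $a$, which is $\Span(\sigma)_a^+$.

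Convexity of the half-space then gives $\overline{ab} \subset \Span(\sigma)_a^+$. Intersecting over all admissible $F$ and $m$, and combining with the symmetric statement, gives the final claim of the lemma.

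The main obstacle is the second step: identifying $\Span(F \setminus \{ a, r \}, m)$ as the span of a genuine boundary wall of $|\St_\Delta(a)|$. This matters because the convexity argument from Proposition \ref{convsep} only controls hyperplanes spanned by walls of the star. I would first try to use flagness together with $m \in \lk_\Delta(a) \cap \lk_\Delta(r)$ to show that $(F \setminus \{ a, r \}) \cup \{ m, a \}$ is a facet.

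If that fails, I would use the wall-crossing form of Part 1 of Proposition \ref{convsep} on the crossing of $F$ through $F \setminus r$. The off-wall vertex there is $m$. Non-separation of $r$ and $a$-side points by $\Span((F \setminus r) \setminus p, m)$ should give the same conclusion via the sign of the $a$-coefficient of $x_b$ in the basis $(F \setminus r) \cup m$. That coefficient is nonnegative by local convexity at $a$, with linear independence of face vertices ruling out degeneracy.
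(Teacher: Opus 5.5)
There is a genuine gap. Your argument needs $\Span(F \setminus \{a,r\}, m)$ to be the span of a boundary wall of the convex set $|\St_\Delta(a)|$, and this never happens. Suppose $\sigma \cup a = (F \setminus r) \cup m$ were a face. Then $m$ would be adjacent to every vertex of $F \setminus r$, and also to $r$ because $m \in \lk_\Delta(r)$. So $F \cup m$ would be a $(d+1)$-clique, which flagness forbids in dimension $d-1$. Hence your ``good case'' is empty. Your fallback's claim that $m$ is the off-wall vertex of the crossing through $F \setminus r$ fails for the same reason: the vertex across $F \setminus r$ from $r$ is never adjacent to $r$.

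Geometrically, $\Span(F \setminus \{a,r\}, m)$ is a hyperplane through $d-1$ vertices of $\lk_\Delta(a)$ that cuts through $|\St_\Delta(a)|$. The identity $\Span(\cdot)_r^- = \Span(\cdot)_a^+$ of Lemma \ref{contractpointspace3}, which you adopt, says exactly that it separates $r \in |\St_\Delta(a)|$ from $a$. For $d = 3$: take $F = \{a,r,x\}$; then $m = y$ is the other vertex of $\lk_\Delta(\{a,r\})$, $\{x,y\} \notin \Delta$, and $\Span(x,y)$ is a chord of the polygon $\lk_\Delta(a)$ cutting off the vertex $r$. So the supporting-hyperplane argument does not apply. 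The last step of your fallback, that the $a$-coefficient of $x_b$ in the basis $(F \setminus r) \cup m$ is nonnegative ``by local convexity at $a$'', is just the claim restated.

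A sanity check confirms that something essential is missing. Your argument never uses $r \notin \lk_\Delta(b)$. Run verbatim with $r$ in place of $b$ (also a vertex of $\lk_\Delta(a)$), it would place $r$ in $\Span(F \setminus \{a,r\}, m)_r^-$. That is only possible in the degenerate case $r \in \Span(F \setminus \{a,r\}, m)$.

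The missing idea is the one the paper uses. By convexity of $|\St_\Delta(a)|$ and $|\St_\Delta(r)|$, and since $b \notin \lk_\Delta(r)$, the vertex $b$ lies on the far side from $r$ of $\Span(F \setminus r)$ for some facet $F \supset \{a,r\}$. So $\overline{br}$ meets $F \setminus r$ in a point $p$ with positive $a$-coefficient. The paper then transfers this separation to $\Span(F \setminus \{a,r\}, m)$. If $a \in \Span(F \setminus \{a,r\}, m)$, the two hyperplanes coincide. Otherwise, the $a$- and $r$-coefficients of $m$ in the $F$-basis are positive, by convexity at $a$ and at $r$. This makes the $a$-coefficient of $r$ in the $((F \setminus r) \cup m)$-basis negative while that of $p$ is positive. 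So $\overline{pr}$, and hence $\overline{br}$, crosses the hyperplane. Your proof needs this use of $|\St_\Delta(r)|$ together with the non-adjacency of $b$ and $r$.
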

	
	\begin{proof}
		Without loss of generality, take $r \in \lk_\Delta(a) \setminus \lk_\Delta(b)$. We will show that $b$ also lies in the half-spaces of Lemma \ref{contractpointspace3}. The convexity of half-spaces would then imply that the entire line segment $\overline{ab}$ lies in these half-spaces. The convexity of $|\St_\Delta(a)|$ and $|\St_\Delta(r)|$ are convex implies that the intersection $|\St_\Delta(a)| \cap |\St_\Delta(r)|$ is also convex. Since $b \in \lk_\Delta(a)$ but $b \notin \lk_\Delta(r)$, this means that $b$ is on the ``wrong side'' (i.e. the opposite side of $r$) with respect to some half-space bounded by the span of a wall of $\Delta$ of the form $F \setminus r$ for some facet $F \in \Delta$ such that $F \supset \{ a, r \}$. This means that the line segment $\overline{br}$ intersects the face $F \setminus r$ at some point $p$ that lies inside some face $G \subset F \setminus r$. Consider a vertex $m \in \lk_\Delta(a) \cap \lk_\Delta(r)$. The assumption $m \notin \Span(F \setminus \{ a, r \})$ from Lemma \ref{contractpointspace3} implies that it does \emph{not} lie in $F$. \\

		Assume that $m \notin \Span(F \setminus \{ a, r \} )$ since the ``degenerate'' case $m \in \Span(F \setminus \{ a, r \} )$ is trivial by the proof of Lemma \ref{contractpointspace3}. We split into cases according to whether $a \in \Span(F \setminus \{ a, r \}, m)$ or not. \\
			
		If $a \in \Span(F \setminus \{ a, r \}, m)$, then we have that $\Span(F \setminus r, m) = \Span(F \setminus \{ a, r \}, m)$. If $p$ is in the convex hull of the vertices of $F \setminus r = (F \setminus \{ a, r \}) \cup a$, the assumption $a \in \Span(F \setminus \{ a, r \}, m)$ implies that $p \in \Span(F \setminus \{ a, r \}, m)$ since $p$ would be a convex linear combination of the vertices of $F \setminus r = (F \setminus \{ a, r \}) \cup a$ and $a$ can be replaced by an element of $\Span(F \setminus \{ a, r \}, m)$ in the linear combination. Since $\Span(F \setminus r) \subset \Span(F \setminus \{ a, r \} ), m)$ and $\Span(F \setminus r)$ is a hyperplane in $k^d$, this implies that $\Span(F \setminus r) = \Span(F \setminus \{ a, r \}, m)$ since $F \setminus r \in \Delta$ and $a \notin \Span(F \setminus \{ a, r \} )$. Then, $b$ being on the opposite side of $r$ with respect to the hyperplane $\Span(F \setminus r)$ is the same as being on the opposite side of $\Span(F \setminus \{ a, r \}, m)$. Thus, the hyperplane $\Span(F \setminus \{ a, r \}, m)$ separates the vertices $b$ and $r$ in this case and $b$ lies in the half-space from Part 3 associated to $m \in \lk_\Delta(a) \setminus \lk_\Delta(b)$. \\

		Suppose that $a \notin \Span(F \setminus \{ a, r \}, m)$. Then, the vertices of $(F \setminus r) \cup m$ form a basis of $k^d$. Note that $b$ lies on the same side of the hyperplane $\Span(F \setminus a)$ as $r$ (i.e. $b$ and $r$ \emph{not} separated by this hyperplane). Since $b \notin F \setminus a$, $r \in F \setminus a$, and $p \ne r$, we have that $p \notin F \setminus a$ (as a realization). This means that the coefficient of $a$ in the $F$-basis expansion of $p$ (a convex linear combination of vertices of $F \setminus r$) is strictly positive and $p \notin \Span(F \setminus a)$. If $r \in \Span(F \setminus \{ a, r \}, m)$, this indicates that the vertex $b$ is on the ``opposite side'' of $r$ with respect to the hyperplane $\Span(F \setminus \{ a, r \}, m)$ by default. This is because $r$ is contained in the hyperplane bounding the half-space in question. Note that $r \in \Span(F \setminus \{ a, r \}, m)$ implies that $\Span(F \setminus \{ a, r \}, m) = \Span(F \setminus a, m)$ and $m \in \Span(F \setminus a)$ since $\dim \Span(F \setminus \{ a, r \}, m) = d - 1$ in our setting. In particular, this means that $\Span(F \setminus  \{a, r\}, m) = \Span(F \setminus a)$. Both $a$ and $b$ are on the same side of this hyperplane. \\

		Assume that $r \notin \Span(F \setminus \{ a, r \}, m)$. Then, we have a setting that is similar to a wall crossing except that the simplicies associated to $(F \setminus r) \cup m$ and $(F \setminus a) \cup m$ do \emph{not} form faces of $\Delta$. If $p \in \Span(F \setminus \{ a, r \} )$, then $p \in \Span(F \setminus \{ a, r \}, m)$ and the hyperplane $\Span(F \setminus \{ a, r \}, m)$ already separates $b$ and $r$. Next, we consider the ``general'' case $p \notin \Span(F \setminus  \{ a, r \} )$. Since $a \notin \Span(F \setminus \{ a, r \}, m)$ and the coefficient of $a$ in the expansion $p = \alpha + ca$ as a convex linear combination of vertices of $F \setminus r = F \setminus \{ a, r \} \cup r$ is strictly positive (i.e. $c > 0$), we have that $p \notin \Span(F \setminus \{ a, r \}, m)$. Otherwise, we would have $a \in \Span(F \setminus \{ a, r \}, m)$. Note that $m$ lies on the same side of the hyperplane $\Span(F \setminus a)$ as $a$ since $|\St_\Delta(a)|$ is convex. In other words, the $a$-coefficient of $m$ with respect to the $F$-basis expansion of $k^d$ is strictly positive. The nonnegativity follows from convexity. If $m \in \Span(F \setminus a) = \Span(F \setminus \{ a, r \}, r)$, then we would have $r \in \Span(F \setminus \{ a, r \}, m)$ since we assumed that $m \notin \Span(F \setminus  \{ a, r \})$. However, we are working in the case where $r \notin \Span(F \setminus \{ a, r \}, m)$. \\

		Similarly, the convexity of $|\St_\Delta(r)|$ implies that the coefficient of $r$ in the $F$-basis expansion of $m$ is nonnegative. This is strictly positive since we are in the case where the vertices of $(F \setminus r ) \cup m$ form a basis of $k^d$. Otherwise, we would have $m \in \Span(F \setminus r)$ and the vertices of $(F \setminus r) \cup m$ do \emph{not} form a basis of $k^d$. After ``solving'' for $r$ in the $F$-basis expansion of $m$, we can see that the coefficient of $a$ in the $((F \setminus r) \cup m)$-basis expansion of $r$ is strictly negative since we are assuming that $r \notin \Span(F \setminus \{ a, r \}, m)$. If $x_m = \beta + b x_r$ with $b > 0$ is the $F$-basis expansion, we have that $b x_r = x_m - \beta$ with $b > 0$ in the $((F \setminus r) \cup m)$-basis expansion. Recall that $p$ is a convex linear combination of the vertices of $F \setminus r$ with a strictly positive coefficient of $a$ and $p \notin \Span(F \setminus \{ a, r \}, m)$. Since the $a$-coefficient of $p$ with respect to the $((F \setminus r) \cup m)$-basis expansion is strictly positive, this implies that there is a point $z$ on the line segment $\overline{pr}$ with $a$-coefficient with respect to the $((F \setminus r) \cup m)$-basis equal to $0$. This would mean that $z \in \Span(F \setminus \{ a, r \}, m)$ and the points $p$ and $r$ (and thus $b$ and $r$) are separated by the hyperplane $\Span(F \setminus \{ a, r \}, m)$.

	\end{proof}

	\begin{rem} \textbf{(Connections between on-wall convexity properties and other criteria) \\}
		The criteria in Lemma \ref{contractpointspace2} and Lemma \ref{contractpointspace3} can be used to characterize individual vertices or pairs of adjacent vertices preserving convexity of the union of stars over $a$ and $b$. This is relevant to checking (non)emptiness of linear contraction point spaces in many examples.  \\
	\end{rem}

	\color{black}

	\subsection{Restrictions on possible realizations and convexity-preserving spaces} \label{convpresrest}
	
	As an initial application of Theorem \ref{contractpointspaceexpress}, we note that even the simple union in Lemma \ref{contractpointspace1} is a useful tool for ruling out convex contractibility of an edge. An example we consider is from boundaries of cross polytopes. In particular, the on-wall condition Lemma \ref{contractpointspace1} causes the failure of convex contractibility although the more ``complicated'' off-wall condition of Lemma \ref{contractpointspace2} is satisfied. The same reasoning will later be used to describe shrinking contraction point spaces in certain cases where they may be expected to increase (Theorem \ref{exsubdivcon}). For applications of this space in other parts of this section, we will focus on more general geometric properties of convex sets and simplicial pseudomanifolds before thinking about connections to PL homeomorphisms (e.g. results leading up to Theorem \ref{convedconst}). \\

	\begin{exmp} \textbf{(Convexity-preserving edge contractions and cross polytopes) \\} \label{contrcrosspolytop}
		We note that boundaries of cross polytopes are minimal with respect to edge subdivisions/contractions among flag simplicial spheres PL homeomorphic to it since every edge is contained in an induced 4-cycle (Theorem 1.2 on p. 70 and p. 77 of \cite{LN}). In other words, contracting an edge would give rise to a simplicial complex that is not flag. This also means that edge contractions cannot preserve convexity of wall crossings since the former must imply flagness (Proposition 5.3 on p. 279 of \cite{LR}). \\
		
		The nonexistence of an edge contraction preserving convexity of wall crossings is consistent with the failure to satisfy the conditions of Lemma \ref{contractpointspace1}. Without loss of generality, suppose that the edge we are considering is $\{ 1, 2 \} \in \Delta$. Then, $\lk_\Delta(1)$ is the cross polytope formed by the vertices $\pm 2, \pm 3, \ldots, \pm d$ and $\lk_\Delta(2)$ is the cross polytope formed by the vertices $\pm 1, \pm 3, \pm 4, \ldots \pm d$. Note that $\St_\Delta(1) = \Cone_1(\lk_\Delta(1))$ and $\St_\Delta(2) = \Cone_2(\lk_\Delta(2))$ in our setting. Although $-1 \in \St_\Delta(2)$ and $-2 \in \St_\Delta(1)$, the edge $\{ -1, -2 \} \in \Delta$ does \emph{not} lie in $|\St_\Delta(1)| \cup |\St_\Delta(2)|$. This implies that the space $|\St_\Delta(1)| \cup |\St_\Delta(2)|$ is \emph{not} convex and the contraction of the edge $\{ 1, 2 \} \in \Delta$ does \emph{not} preserve convexity of wall crossings/stars over vertices. Since the same argument can be repeated for \emph{any} edge $\{ a, b \} \in \Delta$ with $a$ and $b$ replacing $1$ and $2$, the cross polytope $\Delta$ does \emph{not} have any edges whose contraction preserves convexity of wall crossings/stars over vertices. \\

		However, the conditions involving the compatibility with conditions for contraction points to be off-wall vertices opposite to $a$ or $b$ compatible with convex wall crossings (Lemma \ref{contractpointspace2}) are still satisfied. The wall crossings all consist of replacing one of the vertices $p$ by its antipodal vertex $-p$. For any vertex $p$ of the cross polytope, the rays from this vertex to the vertices that it is adjacent to forms a cone containing the entire cross polytope. This already includes every vertex except $-p$ by definition. By convexity, it suffices to show that $-p$ lies in this cone. Note that $\sum_{q \in \Delta \setminus p} (e_q - e_p) = -2(d - 1) e_p$, which is a positive multiple of $-e_p$. After dividing by $2(d - 1)$, we find that $-e_p$ lies in the cone. This implies that the half-space intersection condition is trivial on $\lk_\Delta(a) \cup \lk_\Delta(b)$ (which is equal to $\Delta$ for any $\{ a, b \} \in \Delta$ when $\Delta$ is a cross polytope). Then, the only possibly ``interesting'' condition is from Lemma \ref{contractpointspace3}. Since $m \in \lk_\Delta(a) \cap \lk_\Delta(r)$ ($r = -2 \in \lk_\Delta(1) \setminus \lk_\Delta(2)$), we have that $\Span(F \setminus \{ a, r \} ) = \Span(3, \ldots, d)$ and the half-space condition from Lemma \ref{contractpointspace3} is trivial. \\

	\end{exmp}

	The expression of the contraction point space from Theorem \ref{contractpointspaceexpress} also implies that nearby convex contractible edges can force affine/linear restrictions on realizations of facets containing these vertices (Proposition \ref{starunintspan}). In particular, this forces the flatness of certain wall crossings from walls with a common span (Proposition \ref{convwallspan}) and extreme behavior of linear contraction point spaces after certain external external edge contractions (Theorem \ref{convedconst}). While convex contractibility of edges requires a kind of ``additional convexity'', it is interesting to note that simultaneous convex contractibility can induce flatness of nearby wall crossings. In general, this can be viewed from the perspective of ``snapshots'' of changes in the contraction point space induced by PL homeomorphisms involving external edge subdivisions/contractions. \\

	Apart from Theorem \ref{contractpointspaceexpress} and convexity properties interpreted as (non)separation by hyperplanes, we also use some arguments involving connectivity properties of simplicial pseudomanifolds. In this context, we discuss local topological changes induced by edge contractions. \\
	
	\begin{exmp} \textbf{(Edge contractions and topology of components involved) \\}
		When we consider the contraction of an edge $e = \{ a, b \} \in \Delta$ of a $(d - 1)$-dimensional flag simplicial pseudomanifold $\Delta$ (e.g. one that is locally convex by Proposition 5.3 on p. 279 -- 280 of \cite{LR}), the affected faces are those containing $a$ or $b$. Note that $\lk_\Delta(a)$ and $\lk_\Delta(b)$ are $(d - 2)$-dimensional flag spheres (see Definition \ref{pseudomfldwallcross} and p. 100 of \cite{St}). Then, $\St_\Delta(a) = \lk_\Delta(a) * a$ and $\St_\Delta(b) = \lk_\Delta(b) * b$ are PL-homeomorphic to $(d - 1)$-dimensional closed disks. Since $\Delta$ is flag and $e = \{ a, b \} \in \Delta$, we have that $\lk_\Delta(e) = \lk_\Delta(a) \cap \lk_\Delta(b)$. Since $\lk_\Delta(e)$ is a $(d - 3)$-dimensional (flag) simplicial sphere, we have that $\St_\Delta(e) = \lk_\Delta(e) * a * b$ is PL-homeomorphic to a $(d - 1)$-dimensional closed disk. \\
		
		Putting this together, we have that $|\St_\Delta(a)|$ and $|\St_\Delta(b)|$ are $(d - 1)$-dimensional closed disks whose intersection is a smaller $(d - 1)$-dimensional closed disk $|\St_\Delta(e)|$. When we contract the edge $e = \{ a, b \}$ to a point $w$, this intersection is contracted to a $(d - 2)$-dimensional closed disk $\St_{\widetilde{\Delta}}(w)$ in the contraction $\widetilde{\Delta}$ of $\Delta$ with respect to the edge $e = \{ a, b \} \in \Delta$. If we move from $\widetilde{\Delta}$ to $\Delta$, we are ``opening up'' a $(d - 2)$-dimensional closed disk in a way that doubles the walls of $\widetilde{\Delta}$ containing $w$ (use $a$ and $b$ in $\Delta$ instead). Note that $\Cone_w(S)$ forms a $(d - 2)$-dimensional closed disk that forms a ``diameter'' splitting the $(d - 1)$-dimensional closed disk $|\St_{\widetilde{\Delta}}(w)$ into two parts. \\
	\end{exmp}

	The affine/linear restrictions are related to ``flatness'' of intersections of boundaries of convex sets. This is expressed in terms of invariance of spans of walls on intersections involved in our setting below. \\
	
	\begin{prop} \textbf{(Intersection of convex star unions) \\} \label{starunintspan}
		Suppose that $e = \{ a, b \} \in \Delta$ is an edge such that $|\St_\Delta(a)| \cup |\St_\Delta(b)|$ is convex. Take vertices $y$ and $z$ are adjacent vertices of $\Delta$ that are opposite to $a$ or $b$ via a single wall crossing using a wall of $\lk_\Delta(a) \cup \lk_\Delta(b)$. If $|\St_\Delta(y)| \cup |\St_\Delta(z)|$ is convex, then the walls of $\Delta$ in $(\lk_\Delta(a) \cup \lk_\Delta(b)) \cap (\lk_\Delta(y) \cup \lk_\Delta(z))$ all have the same span. \\
	\end{prop}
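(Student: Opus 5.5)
The plan is to use the description of convex star unions as intersections of half-spaces (Proposition \ref{convsep}, Part 3, applied to the union $|\St_\Delta(a)| \cup |\St_\Delta(b)|$ viewed as the star of the contraction point $w$). Both $K_1 \coloneq |\St_\Delta(a)| \cup |\St_\Delta(b)|$ and $K_2 \coloneq |\St_\Delta(y)| \cup |\St_\Delta(z)|$ are convex, and their boundaries are unions of realizations of walls. Concretely, $\partial K_1$ is made of the walls of $\lk_\Delta(a) \cup \lk_\Delta(b)$ not containing $a$ or $b$, and $\partial K_2$ is made of the analogous walls for $y,z$. A wall $\tau$ in $(\lk_\Delta(a) \cup \lk_\Delta(b)) \cap (\lk_\Delta(y) \cup \lk_\Delta(z))$ therefore lies on both boundaries.

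Since $y$ or $z$ is opposite to $a$ or $b$ across such a wall, $K_1$ and $K_2$ lie on opposite sides of the hyperplane $\Span(\tau)$ locally near $\tau$. The interiors of $K_1$ and $K_2$ are disjoint, because facets of $\Delta$ meet only along faces and a facet containing $a$ or $b$ cannot also contain $y$ or $z$ across the wall without violating flagness; this is the same observation used in the proof of Lemma \ref{contractpointspace3}.

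The first step is to invoke the separation theorem for disjoint convex bodies (Chapter 1 of \cite{Sch}). It gives a single hyperplane $H$ with $K_1 \subset H^+$ and $K_2 \subset H^-$. Then $K_1 \cap K_2 \subset H$, and $K_1 \cap K_2$ is exactly the union of the realizations of the common walls. The second step is to show that each common wall $\tau$ spans $H$. Here $\tau$ is $(d-1)$-dimensional with linearly independent vertices, so $\Span(\tau)$ is a hyperplane, and $\tau \subset K_1 \cap K_2 \subset H$. Hence $\Span(\tau) = H$ whenever $H$ is taken through the origin. The span is linear, so I would use that $\tau \subset H$ with $\dim \Span(\tau) = d - 1$. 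If the separating hyperplane is only affine, I would instead argue with supporting hyperplanes of $K_1$ at relative interior points of $\tau$. At such a point the supporting hyperplane of $K_1$ is unique and equals $\Span(\tau)$ (the boundary is flat there), and the same hyperplane must also support $K_2$. Separation then forces all these supporting hyperplanes to coincide with $H$.

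The main obstacle is the disjointness of interiors together with the claim that the common walls all lie in the intersection of the two boundaries. The danger is that $K_1 \cap K_2$ might contain full-dimensional pieces, such as facets containing both a vertex of $\{a,b\}$ and a vertex of $\{y,z\}$. I would first try to rule this out using flagness (Proposition 5.3 of \cite{LR}) together with the hypothesis that $y,z$ are reached by a single wall crossing. If that fails, the fallback is to restrict to connected pieces of the common boundary. On those pieces I would use the fact that two convex sets whose boundaries share a $(d-1)$-dimensional region while lying locally on opposite sides must share the supporting hyperplane there. I would then propagate the common span between adjacent common walls through shared codimension-2 faces, using the fact that a convex boundary cannot bend along a ridge on which both sides are flat.
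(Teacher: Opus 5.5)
\textbf{Gap: disjointness of the interiors.} Your final step is sound. Once a single linear hyperplane $H$ contains $K_1 \cap K_2$, every common wall $\tau$ lies in $H$, and $\dim \Span(\tau) = d - 1$ forces $\Span(\tau) = H$. The gap is in the input you feed into the separation theorem.

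For $K_1 = |\St_\Delta(a)| \cup |\St_\Delta(b)|$ and $K_2 = |\St_\Delta(y)| \cup |\St_\Delta(z)|$, disjoint interiors means no facet contains one of $a, b$ together with one of $y, z$. Flagness only rules out pairs that are directly opposite across a wall: for instance $\{a, y\} \in \Delta$ would make $\tau \cup \{a, y\}$ a clique on $d + 1$ vertices. It says nothing about, e.g., $y \in \lk_\Delta(b)$. Take the boundary of the cross polytope (Example \ref{contrcrosspolytop}) with $a = 1$, $b = 2$ and $\tau = \{-2, 3, \ldots, d\} \in \lk_\Delta(a)$. The vertex $y = -1$ is opposite to $a$ across $\tau$, yet $\{-1, 2\} \in \Delta$, and this complex is flag.

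So the step fails as you argue it. What actually excludes such facets is the convexity of $K_1$, which your justification never uses. Your fallback of propagating a common span across shared ridges does not repair this. It presupposes that the common walls are linked through ridges inside the common boundary, which is neither a hypothesis nor established at this point.

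\textbf{The missing idea.} This is the whole of the paper's proof: use convexity at the fixed wall itself. Because $\tau$ lies in $\partial K_1$ and $K_1$ is convex, $\Span(\tau)$ is a supporting hyperplane. Hence all of $K_1$ lies in the closed half-space of $\Span(\tau)$ containing $a$ (Corollary 1.3.5 of \cite{Sch}). In the same way, $K_2$ lies in the closed half-space containing $y$.

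Since $\tau \cup a$ and $\tau \cup y$ are the two facets through $\tau$, the points $a$ and $y$ lie strictly on opposite sides, so $K_1 \cap K_2 \subset \Span(\tau)$. Any other common wall $\tau'$ is contained in $K_1 \cap K_2$, so $\Span(\tau') = \Span(\tau)$ by dimension.

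This argument has three advantages over your route:
\begin{itemize}
\item It gives disjointness of the interiors as a by-product, since $K_1 \cap K_2$ lies in a hyperplane.
\item It makes the separation theorem superfluous.
\item It removes your affine-versus-linear concern, because the hyperplane produced is $\Span(\tau)$ from the outset.
\end{itemize}
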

	
	\begin{proof}
		Fix a wall $\tau \in (\lk_\Delta(a) \cup \lk_\Delta(b)) \cap (\lk_\Delta(y) \cup \lk_\Delta(z))$. Without loss of generality, suppose that $\tau \in \lk_\Delta(a) \cap \lk_\Delta(y)$. This is a wall that is in the boundaries $\partial(|\St_\Delta(a)| \cup |\St_\Delta(b)|)$ and $\partial( |\St_\Delta(y)| \cup |\St_\Delta(z)|)$. Since $|\St_\Delta(a)| \cup |\St_\Delta(b)|$ is convex, all points of $|\St_\Delta(a)| \cup |\St_\Delta(b)|$ lie on the same side of the hyperplane $\Span(\tau)$ as $a$ (Corollary 1.3.5 on p. 12 of \cite{Sch}). Similarly, the convexity of $|\St_\Delta(y)| \cup |\St_\Delta(z)|$ implies that all points of $|\St_\Delta(y)| \cup |\St_\Delta(z)|$ lie on the same side of $\Span(\tau)$ as $y$. Putting these together, the points of the intersection $ (\lk_\Delta(a) \cup \lk_\Delta(b)) \cap (\lk_\Delta(y) \cup \lk_\Delta(z))$ must lie \emph{on} the hyperplane $\Span(\tau)$. For example, this includes the vertices of any wall of $\Delta$ lying in $(\lk_\Delta(a) \cup \lk_\Delta(b)) \cap (\lk_\Delta(y) \cup \lk_\Delta(z))$ (e.g. one opposite to $b$ and $z$). Dimension reasons then imply that the walls of $\Delta$ in $(\lk_\Delta(a) \cup \lk_\Delta(b)) \cap (\lk_\Delta(y) \cup \lk_\Delta(z))$ all have the same span. \\
	\end{proof}
	
	The result above gives initial constraints on nearby convex contractible edges. \\
	
	\begin{cor} \textbf{(Non-repeated wall spans and preservation of contraction spaces with respect to $e = \{ a, b \}$) \\}
		\begin{enumerate}
			\item Suppose that $|\St_\Delta(a)| \cup |\St_\Delta(b)|$ is convex and that there is an edge $\{ y, z \} \in \Delta$ where $y$ \textcolor{blue}{or} $z$ are opposite to $a$ or $b$. If there is a pair of walls in $\lk_\Delta(a) \cup \lk_\Delta(b)$ opposite to $y$ and $z$ with different spans, then the contraction of $\{ y, z \}$ does \emph{not} preserve convexity. \\
			
			\item Suppose that the assumption in Part 1 holds for \emph{any} pair of adjacent vertices $y$ and $z$ opposite to $a$ and $b$. Then, any convexity-preserving contractions of edges that do \emph{not} lie in $|\St_\Delta(a)| \cup |\St_\Delta(b)|$ must preserve the contraction point space with respect to $e = \{ a, b \}$. \\
		\end{enumerate}
	\end{cor}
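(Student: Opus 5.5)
Part 1 is essentially the contrapositive of Proposition \ref{starunintspan}, once we translate ``the contraction of $\{ y, z \}$ preserves convexity'' into a statement about the star union. The plan is as follows. Suppose the contraction of $\{ y, z \}$ to a point $w'$ yields a realization with convex wall crossings. By Part 3 of Proposition \ref{convsep}, $|\St_{\widetilde{\Delta}}(w')|$ is convex. Lemma \ref{contractpointspace1} then lets us identify this set with $|\St_\Delta(y)| \cup |\St_\Delta(z)|$, which is therefore convex. This is also the necessity remark in Theorem \ref{contractpointspaceexpress}: the linear contraction point space of $\{ y, z \}$ is empty unless the star union is convex.

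With both $|\St_\Delta(a)| \cup |\St_\Delta(b)|$ and $|\St_\Delta(y)| \cup |\St_\Delta(z)|$ convex, and $y$ or $z$ opposite to $a$ or $b$ across a wall of $\lk_\Delta(a) \cup \lk_\Delta(b)$, Proposition \ref{starunintspan} applies. It says that every wall of $\Delta$ in $(\lk_\Delta(a) \cup \lk_\Delta(b)) \cap (\lk_\Delta(y) \cup \lk_\Delta(z))$ has the same span. The walls in $\lk_\Delta(a) \cup \lk_\Delta(b)$ opposite to $y$ and $z$ lie in this intersection. Hence a pair of them with different spans contradicts the proposition, which proves Part 1.

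For Part 2, take a convexity-preserving contraction of an edge $\{ y, z \}$ not lying in $|\St_\Delta(a)| \cup |\St_\Delta(b)|$. There are two cases.
\begin{enumerate}
\item If neither $y$ nor $z$ is opposite to $a$ or $b$ via one wall crossing, then no hyperplane $\Span(((F \setminus a) \cup a'_F) \setminus r)$ or $\Span(((F \setminus b) \cup b'_F) \setminus r)$ in the intersection of Theorem \ref{contractpointspaceexpress} involves $y$ or $z$. The half-spaces from Lemma \ref{contractpointspace3} and the set $|\St_\Delta(a)| \cup |\St_\Delta(b)|$ from Lemma \ref{contractpointspace1} are also unchanged. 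Finally, Lemma \ref{contractpointspace4} shows that the on-wall conditions remain redundant along $\overline{ab}$. So the contraction point space of $e$ is unchanged.
\item If $y$ or $z$ is opposite to $a$ or $b$, then the hypothesis of Part 2, combined with Part 1, rules out a convexity-preserving contraction of $\{ y, z \}$. So this case does not occur.
\end{enumerate}

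The main obstacle is the first case. We must verify that the data entering Theorem \ref{contractpointspaceexpress} only involve vertices of $\lk_\Delta(a) \cup \lk_\Delta(b)$ and the vertices one wall crossing away from them, namely the $a'_F$ and $b'_F$. In particular, we need to check that the contraction of $\{ y, z \}$ does not alter any of these vertices, walls, or realizations. I would first try to argue this via flagness: $\Delta$ is flag by Proposition 5.3 of \cite{LR}, so $\{ y, z \}$ not lying in the star union forces $y, z \notin \lk_\Delta(a) \cup \lk_\Delta(b)$. The ``opposite'' vertices are exactly the case already excluded by the case split.
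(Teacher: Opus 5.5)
Your Part 1 is the intended argument. The paper gives no separate proof and reads the corollary off Proposition \ref{starunintspan}. Your contrapositive matches that: a convexity-preserving contraction of $\{y,z\}$ forces $|\St_\Delta(y)| \cup |\St_\Delta(z)|$ to be convex, and then two walls of $(\lk_\Delta(a) \cup \lk_\Delta(b)) \cap (\lk_\Delta(y) \cup \lk_\Delta(z))$ with different spans are impossible. Your case split in Part 2 is also the natural one. The gap is at the step you yourself single out as the main obstacle, and the fix you propose there is false.

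Flagness does not force $y, z \notin \lk_\Delta(a) \cup \lk_\Delta(b)$. It only shows that an edge with both endpoints in $\lk_\Delta(a)$ (or both in $\lk_\Delta(b)$) lies in $\St_\Delta(a)$ (or $\St_\Delta(b)$). Combined with Proposition \ref{starunionadj}, this gives that an edge not lying in $|\St_\Delta(a)| \cup |\St_\Delta(b)|$ has \emph{at most one} endpoint among the vertices of $\St_\Delta(a) \cup \St_\Delta(b)$, and it can have one.

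For instance, take a region of $\Delta$ combinatorially like the triangular lattice. Let $q$ be the vertex of the hexagon $\lk_\Delta(a)$ diametrically opposite $b$, and $s$ the vertex of $\lk_\Delta(q)$ diametrically opposite $a$. Then:
\begin{itemize}
\item $\{q,s\}$ does not lie in the star union;
\item $q$ is not opposite to $a$ (by flagness) or to $b$;
\item $s$ is not opposite to $a$ or $b$.
\end{itemize}
So this edge lands in your case 1, and the hypothesis of Part 2 says nothing about it.

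Contracting $\{q,s\}$ moves $q$ to a new point $u$ in the interior of $\overline{qs}$. This changes three things:
\begin{itemize}
\item $|\St_\Delta(a)| \cup |\St_\Delta(b)|$ itself;
\item the walls $F \setminus a$ with $q \in F$, and hence the hyperplanes $\Span(((F \setminus a) \cup a'_F) \setminus r)$, $r \neq q$, in Theorem \ref{contractpointspaceexpress};
\item the half-spaces of Lemma \ref{contractpointspace3} attached to $r = q$.
\end{itemize}
So your claim that none of the data in Theorem \ref{contractpointspaceexpress} involves $y$ or $z$ fails. The paper itself, in Proposition \ref{genextcon}, Part 2(c), treats exactly this configuration as one in which the off-wall half-spaces do change.

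To close Part 2 you need one of two things. One option is to read Part 2 as concerning edges with \emph{both} endpoints outside the vertex set of $\St_\Delta(a) \cup \St_\Delta(b)$. Then your case 1 is correct, because all data in Theorem \ref{contractpointspaceexpress} live on the vertices of the two stars and the opposite vertices $a'_F, b'_F$, and case 2 is Part 1. The other option is an additional argument that moving a boundary vertex $q$ along $\overline{qs}$ leaves the intersection with $\overline{ab}$ unchanged. Nothing in your proposal supplies such an argument.
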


	The first connectivity property of this ``flat'' region with a common span is a consequence of convexity properties implied by Lemma \ref{contractpointspace1}. \\
	
	\color{black} 
	\begin{prop} \textbf{(Adjacency constraints within unions of stars) \\}
		\label{starunionadj}
		Let $\Delta$ be a simplicial pseudomanifold and $e = \{ a, b \} \in \Delta$ be an edge of $\Delta$. Take vertices $r \in \lk_\Delta(a) \setminus \lk_\Delta(b)$ and $s \in \lk_\Delta(b) \setminus \lk_\Delta(a)$. \\
		\begin{enumerate}
			\item If $\{ r, s \} \in \Delta$, then the intersection of the line segment $\overline{rs}$ with $|\St_\Delta(a)| \cup |\St_\Delta(b)|$ consists of the disjoint vertices $r$ and $s$ and does \emph{not} include any point of the interior of $\overline{rs}$. \\
			
			\item If $|\St_\Delta(a)| \cup |\St_\Delta(b)|$ is convex, then $\{ r, s \} \notin \Delta$. \\
		\end{enumerate}
	\end{prop}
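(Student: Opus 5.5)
Part 1 is a statement about how a realized simplicial complex meets itself, and it follows from the rule that faces intersect in faces. Suppose $\{ r, s \} \in \Delta$ and some point $x$ in the interior of $\overline{rs}$ lies in $|\St_\Delta(a)| \cup |\St_\Delta(b)|$, say $x \in G$ for a facet $G \ni a$. The realizations of the edge $\{ r, s \}$ and of $G$ then meet in a common face of both. The only faces of $\{ r, s \}$ are $\emptyset$, $r$, $s$ and $\{ r, s \}$, and the common face contains the interior point $x$, so it must be $\{ r, s \}$ itself. This forces $\{ r, s \} \subset G$, hence $s \in \lk_\Delta(a)$, which contradicts $s \in \lk_\Delta(b) \setminus \lk_\Delta(a)$. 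The case $x \in G \ni b$ is symmetric: it gives $r \in \lk_\Delta(b)$, again a contradiction.

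It remains to see that the intersection $\overline{rs} \cap (|\St_\Delta(a)| \cup |\St_\Delta(b)|)$ really does contain the endpoints. This holds because $r \in |\St_\Delta(a)|$ and $s \in |\St_\Delta(b)|$. They are distinct points because realizations of vertices of a face are linearly independent.

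For Part 2, assume $|\St_\Delta(a)| \cup |\St_\Delta(b)|$ is convex and suppose for contradiction that $\{ r, s \} \in \Delta$. The endpoints $r$ and $s$ both lie in the union, as noted above, so convexity puts the whole segment $\overline{rs}$ inside it. In particular interior points of $\overline{rs}$ lie in the union, which contradicts Part 1. Hence $\{ r, s \} \notin \Delta$. This is the same mechanism as the cross polytope computation in Example \ref{contrcrosspolytop} with the edge $\{ -1, -2 \}$, and it could be cross-referenced there.

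The main obstacle is making the face-intersection step in Part 1 rigorous, namely that a point in the relative interior of a realized edge lying in a realized facet forces the edge to be a face of that facet. I would argue this from the convention that nonempty intersections of distinct realized faces are realized common faces (the remark on linear independence of vertices). Applied to the edge and $G$, this gives $\overline{rs} \cap |G| = |\overline{rs} \cap G|$, which is a face of the edge. If that face is only a vertex or empty, it cannot contain $x$, so it must be the whole edge. I would also note in passing that $r \ne a, b$ and $s \ne a, b$, since links exclude their centers, so the edge $\{ r, s \}$ is genuinely distinct from $e$.
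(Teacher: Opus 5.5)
Your proof is correct and is essentially the paper's own argument: Part 1 uses the fact that realized faces meet in common faces, so an interior point of $\overline{rs}$ lying in a facet $G \ni a$ would force $\{a, r, s\} \subset G$; Part 2 combines convexity of the union with Part 1. One correction to your closing remark: links excluding their centers only gives $r \ne a$ and $s \ne b$, whereas $r = b$ or $s = a$ is literally permitted (for instance $b \in \lk_\Delta(a) \setminus \lk_\Delta(b)$), and in those cases $\overline{rs}$ lies entirely inside $|\St_\Delta(b)|$ or $|\St_\Delta(a)|$, so $r \ne b$ and $s \ne a$ must be taken as an implicit hypothesis---one the paper also makes tacitly, and exactly what your step ``hence $s \in \lk_\Delta(a)$'' needs.
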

	
	\begin{proof}
		\begin{enumerate}
			\item Since $s \notin \lk_\Delta(a)$, we have that $\{ r, s \} \notin \lk_\Delta(a)$. Similarly, $\{ r, s \} \notin \lk_\Delta(b)$ since $r \notin \lk_\Delta(b)$. This implies that $\{ r, s \} \notin \St_\Delta(a) \cup \St_\Delta(b)$ and the line segment $\overline{rs}$ crosses at least one wall of $\Delta$.  \\
			
			Recall that the intersection of two faces of a simplicial complex is a face of the starting faces. If these are distinct faces, the intersection lies on the boundary of each of them. Applying this to the setting above, we have that the intersection of $\{ r, s \}$ with a face of $\St_\Delta(a) \cup \St_\Delta(b)$ is at most a singleton (vertex $r$ or $s$). This yields the desired statement since the support is the union of realization of faces in the given simplicial complex. \\
			
			\item Since $r \in \lk_\Delta(a)$ and $s \in \lk_\Delta(b)$, we have that $r, s \in |\St_\Delta(a)| \cup |\St_\Delta(b)|$. The convexity of $|\St_\Delta(a)| \cup |\St_\Delta(b)|$ would then imply that the entire line segment $\overline{rs}$ is contained in $|\St_\Delta(a)| \cup |\St_\Delta(b)|$. Suppose that $\{ r, s \} \in \Delta$. Then, Part 1 implies that intersection of any particular face of $\St_\Delta(a) \cup \St_\Delta(b)$ is empty or a single vertex given by $r$ or $s$ by Part 1 and $\overline{rs} \cap |\St_\Delta(a)| \cup |\St_\Delta(b)| \subset \{ r, s \}$. However, this would contradict our assumption that $|\St_\Delta(a)| \cup |\St_\Delta(b)|$ is convex. Thus, we have that $\{ r, s \} \notin \Delta$ if $|\St_\Delta(a)| \cup |\St_\Delta(b)|$ is convex. 
		\end{enumerate}
	\end{proof}

	Another key input is that walls in the intersection with the same span are connected via wall crossings. In order to show this, we combine previous results with strong connectivity properties of simplicial pseudomanifolds and properties of realizations of simplicial complexes (e.g. distinct faces intersecting on the boundary). \\
	
	\color{black} 
	\begin{prop} \label{convwallspan}
		Let $\Delta$ be a $(d - 1)$-dimensional simplicial pseudomanifold. Suppose that $|\St_\Delta(r)| \subset k^d$ is a $(d - 1)$-dimensional polytope and $\Delta$ has convex wall crossings with respect to $r$. Consider a wall $\tau \in \lk_\Delta(r)$. The collection of walls of $\lk_\Delta(r)$ with span equal to $\Span(\tau)$ form a simplicial pseudomanifold. In particular, this means that such facets are connected to each other via wall crossings inside $\lk_\Delta(r)$. As wall crossings in $\St_\Delta(r)$ with $r$ on the wall, they are flat with respect to $r$. \\
		
	\end{prop}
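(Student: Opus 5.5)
Fix the hyperplane $H = \Span(\tau)$ and let $\mathcal{C}$ be the collection of walls $\sigma \in \lk_\Delta(r)$ with $\Span(\sigma) = H$, together with all their faces. I would first settle flatness, then prove the pseudomanifold property, with the connectivity part as the main work.

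\textbf{Flatness.} Take two walls $\sigma, \sigma' \in \mathcal{C}$ sharing a $(d-3)$-face $\rho$, so that $\rho \cup r$ is a wall of $\St_\Delta(r)$ with off-wall vertices $s \in \sigma \setminus \rho$ and $s' \in \sigma' \setminus \rho$. Since $\Span(\sigma) = \Span(\sigma') = H$ and $\rho \cup s \subset H$, we get $s' \in \Span(\rho, s) = \Span((\rho \cup r) \setminus r, s)$. By Part 1 of Proposition \ref{convsep}, this is exactly flatness of the wall crossing with respect to $r$. I would also verify the converse: a flat crossing inside $\lk_\Delta(r)$ between walls $\rho\cup s$ and $\rho\cup s'$ forces $\Span(\rho \cup s') = \Span(\rho \cup s)$. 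This converse is what lets us move along flat crossings without leaving $\mathcal{C}$.

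\textbf{Pseudomanifold conditions (a) and (b) for $\mathcal{C}$,} viewed as a $(d-2)$-dimensional complex. Condition (a) holds by construction. For (b), take a $(d-3)$-face $\rho$ of some $\sigma \in \mathcal{C}$. Since $\lk_\Delta(r)$ is itself a pseudomanifold (links of codimension-2 faces in $\Delta$ are circles), $\rho$ lies in exactly two walls $\sigma, \sigma''$ of $\lk_\Delta(r)$. So at most two members of $\mathcal{C}$ contain $\rho$, and I must show $\sigma'' \in \mathcal{C}$. Here I would use that $|\St_\Delta(r)|$ is a polytope whose facet-supporting hyperplanes are spans of walls opposite $r$ (Proposition \ref{convsep}, Part 3, and Lemma 1.4.1 of \cite{Sch}). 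The set $|\St_\Delta(r)| \cap H$ is then a face $P_H$ of this convex body, and $\rho$ lies in $P_H$. If $\rho$ lies in the relative interior of $P_H$, then $\sigma''$ also meets $H$ in a $(d-2)$-dimensional set near $\rho$, so $\sigma'' \subset H$. If $\rho$ lies on the relative boundary of $P_H$, then $\mathcal{C}$ has boundary there. So condition (b) needs the statement to be read as $\mathcal{C}$ being a pseudomanifold possibly with boundary, i.e.\ a triangulation of the face $P_H$. I expect this boundary subtlety to be the main obstacle. I would handle it by proving that $|\mathcal{C}| = P_H$ is a convex $(d-2)$-ball triangulated by $\mathcal{C}$, since the boundary of $|\St_\Delta(r)|$ is exactly $|\lk_\Delta(r)|$.

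\textbf{Strong connectivity (condition (c)).} Since $P_H$ is convex, take interior points $x \in \sigma$ and $x' \in \sigma'$ and a generic segment $\overline{xx'} \subset P_H$ avoiding all faces of codimension $\ge 2$ in $\mathcal{C}$. Each point of the segment lies in some wall of $\lk_\Delta(r)$ contained in $H$, because $\lk_\Delta(r)$ triangulates $\partial |\St_\Delta(r)|$ and faces of a simplicial complex meet in faces. The sequence of walls the segment passes through changes only across $(d-3)$-faces, so it gives a chain of wall crossings inside $\lk_\Delta(r)$ connecting $\sigma$ to $\sigma'$. By the flatness step, each crossing in this chain is flat with respect to $r$.
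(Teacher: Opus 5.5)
Your proposal is correct and follows essentially the paper's route. Both arguments use the convexity of $|\St_\Delta(r)| \cap \Span(\tau)$, cut out by a supporting hyperplane, and its covering by the walls of $\lk_\Delta(r)$ lying in $\Span(\tau)$; both pass from wall to wall across shared $(d-3)$-faces, where your generic segment is a cleaner version of the paper's ``convex combinations of $A$ and $m$'' step; and both get flatness because $r$ drops out of the wall relation, i.e.\ $s' \in \Span(\rho, s)$.

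Your caveat on condition (b) is also correct and goes beyond the paper, whose proof never checks that condition. Under the polytope hypothesis, the collection triangulates a proper face of $|\St_\Delta(r)|$ and so has boundary, making it a pseudomanifold only in the with-boundary sense; that is all the connectivity and flatness conclusions actually use.
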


	\begin{rem}
		Some connectivity observations may be generalized to $(d - 1)$-dimensional simplicial pseudomanifolds $S$ that have realizations with support $|S|$ given by $(d - 1)$-dimensional convex polytopes. Note that $\Span(\tau) \cap |\St_S(p)|$ is a $(d - 1)$-dimensional closed disk either containing $p$ in its or its boundary (see Corollary 1.16 on p. 24, definitions on p. 26, and Lemma 1.17 on p. 27 of \cite{Hud}). Another key input would be the fact that the intersection of the $k^d$ realization of a face $G$ of $S$ with a non-separating hyperplane hyperplane is a (possibly empty) face of $G$. \\
	\end{rem}

	\begin{proof}
		Note that a finite intersection of convex sets is convex and that convex sets are connected. We would like to use the initial conditions of the problem to make a stronger statement about how the components are connected. \\
		
		Since the support $|\St_\Delta(r)|$ of $\St_\Delta(r)$ (i.e. union of facets of $\St_\Delta(r)$ in $k^d$) is \emph{not} an affine subspace or a half-space inside the ambient space, $|\St_\Delta(r)|$ is the convex hull of its boundary points. Recall that every nonempty convex set that is not an affine space or a half-space is equal to the intersection of its supporting half-spaces (Corollary 1.3.5 on p. 12 of \cite{Sch}). In our case, we have that $\Span(\tau)$ forms the boundary of a supporting half-space of $|S|$. Note that the intersection of a set $A$ and a union of sets is the union of intersections of each of these sets with $A$. \\

		Let $S = \St_\Delta(r)$. Start with a codimension 1 face $A$ of a wall of $S$ (i.e. codimension 2 in S) with span equal to $\Span(\tau)$. Consider convex linear combinations of the vertices of $A$ with a vertex $m \in |S| \cap \Span(\tau)$ \emph{not} in A. In order to cover parts ``in between'' $A$ and $m$, convex linear combinations of vertices of $A$ and $m$ must also be contained in a wall of $S$ with span equal to $\Span(\tau)$. This is because $|S| \cap \Span(\tau)$ is the union of intersections of facets of $S$ with $\Span(\tau)$. To be more precise, each new vertex introduced via a wall crossing via a linear combination of $A$ and $m$ lies in $\Span(S)$. We can see this by considering one wall crossing at a time. For example, a single wall crossing would mean points in the interior of the convex linear combination of a single new vertex \emph{outside} $\Span(\tau)$ and vertices of $A$ are \emph{not} in $\Span(\tau)$. We can then repeat this argument for subsequent wall crossings after $A$ has a single vertex replaced by a new one. Let $L$ be the span of a wall of $\lk_\Delta(r)$ that is crossed by convex linear combinations of $m$ and $A$. The argument above boils down to a strict containment $L \supset \Span(G)$ of a codimension 2 face $G \in \Delta$ such that $\Span(G) \subset \Span(\tau)$ and $L \subset \Span(\tau)$. This would imply that $L = \Span(\tau)$. Note that $|S| \cap \Span(\tau)$ is convex since the intersection of a pair of convex sets is convex. \\

		Since the span of the vertices excluding $r$ is equal to $\Span(\tau)$ for each of the facets in the wall crossings described above, the vertex $r$ is \emph{not} involved in the associated wall relations. This implies that the wall crossings in $\Delta$ are flat with respect to $r$.

	\end{proof}

	The connectivity and affine/linear ``flatness'' properties yield constraints on nearby convex contractible edges. We first obtain restrictions on possible wall crossings and the number of walls in between a given pair of ``opposite vertices'' of the pair of contractible edges considered. Returning to convex contractibility, the earlier restrictions imply that linear contraction point spaces of nearby convex contractible edges can have extreme behavior. \\
	
	\color{black} 
	
	\begin{thm} \textbf{(Convex contractible edge adjacency constraints) \\}
		\label{convedconst}
		Suppose that a $(d - 1)$-dimensional simplicial pseudomanifold $\Delta$ has convex wall crossings and $e = \{ a, b \} \in \Delta$ is an edge of $\Delta$ such that $|\St_\Delta(a)| \cup |\St_\Delta(b)|$ is convex. Let that $y$ and $z$ be adjacent vertices of $\Delta$ opposite from $a$ or $b$ via a single wall crossing using $a$ or $b$ as off-wall vertices. \\ 
		
		\begin{enumerate}
			\item Suppose that $|\St_\Delta(y)| \cup |\St_\Delta(z)|$ is convex. Consider wall crossings among facets of $(\St_\Delta(a) \cup \St_\Delta(b)) \cap (\St_\Delta(y) \cup \St_\Delta(z))$. Given a fixed on-wall vertex from $r \in \{ a, b \}$ and another one chosen from $s \in \{ y, z \}$, there are at most $d$ walls of $\Delta$ in $\lk_\Delta(r) \cap \lk_\Delta(s)$. The material in Proposition \ref{convwallspan} points towards possible generalizations following the contraction of the edges $\{ a, b \}$ and $\{ y, z \}$ if they preserve convexity of wall crossings. These walls are connected to each other via wall crossings in $\Delta$ keeping $r$ or $s$ on the wall. \\  
			
			\item Suppose $\{ y, z \} \in \Delta$ is convex contractible (Definition \ref{contptspaceconvcon}) to a point in the \emph{interior} of the line segment $\overline{yz}$. Then, contracting the edge $\{ y, z \}$ yields a contraction point space with respect to the edge $\{ a, b \}$ that is either at most one point or the entire line segment $\overline{ab}$. \\

		\end{enumerate}
	\end{thm}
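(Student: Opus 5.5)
For Part 1, the plan is to combine Proposition \ref{starunintspan} with Proposition \ref{convwallspan}. Since both $|\St_\Delta(a)| \cup |\St_\Delta(b)|$ and $|\St_\Delta(y)| \cup |\St_\Delta(z)|$ are convex, Proposition \ref{starunintspan} applies. It shows that every wall lying in $(\lk_\Delta(a) \cup \lk_\Delta(b)) \cap (\lk_\Delta(y) \cup \lk_\Delta(z))$ has the same span, which I will call $H$. Now fix $r \in \{ a, b \}$ and $s \in \{ y, z \}$. A wall $\tau \in \lk_\Delta(r) \cap \lk_\Delta(s)$ is a $(d-2)$-simplex whose vertices lie in $H$. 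I will show these walls are pairwise distinct $(d-2)$-faces inside the single hyperplane $H$. They all lie on the common boundary piece $\partial(|\St_\Delta(r)|) \cap \partial(|\St_\Delta(s)|)$, which is contained in $H$. Their union $H \cap |\St_\Delta(r)|$ is convex, because it is the intersection of a convex set with a supporting hyperplane. Since $|\St_\Delta(r)| \cap H$ is a face of the star and $r \notin H$, each such wall is a facet of the cone $\Cone_r$ restricted to $H$.

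The count of at most $d$ walls should come from Proposition \ref{convwallspan}, applied to the vertex $r$ and to the vertex $s$. Under that proposition the walls of $\lk_\Delta(r)$ with span $H$ form a pseudomanifold connected by wall crossings that are flat with respect to $r$. The same holds for $s$. The key point is that the two stars sit on opposite sides of $H$. So the region $H \cap |\St_\Delta(r)| = H \cap |\St_\Delta(s)|$ has to be triangulated compatibly from both sides, with every interior codimension-2 face in $H$ shared. I expect to argue that an interior vertex of this region would have to be a vertex in $\lk_\Delta(r) \cap \lk_\Delta(s)$ whose own star contains both $r$ and $s$. Flagness, coming from convex wall crossings (Proposition 5.3 of \cite{LR}), together with Proposition \ref{starunionadj}, should rule this out. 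Then the region is a single $(d-2)$-simplex subdivided without interior vertices. Each wall has $d-1$ vertices and their vertex set totals at most $d$ points, so there are at most $d$ walls. I expect this counting step to be the main obstacle. If it fails, I would fall back on bounding the number of $(d-2)$-simplices spanned by $d$ points in general position in a $(d-2)$-dimensional affine slice. Connectivity via wall crossings that keep $r$ or $s$ on the wall is exactly the pseudomanifold statement of Proposition \ref{convwallspan}.

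For Part 2, let $u$ be an interior point of $\overline{yz}$ realizing the contraction, so $x_u = (1-\eta)x_y + \eta x_z$. I will compare the half-space description of Theorem \ref{contractpointspaceexpress} for $\{ a, b \}$ before and after the contraction. Only the half-spaces in Lemma \ref{contractpointspace2} that use $y$ or $z$ as $a'_F$ or $b'_F$ change: $y$ or $z$ is replaced by $u$. The relevant walls all lie in the common hyperplane $H$ from Part 1, and each new constraint is of the form $\Span(((F \setminus a) \cup u) \setminus r)^+_r$. There are then two cases. If $u \in H$, then $y, z \in H$ as well, since $u$ is interior to $\overline{yz}$ and both endpoints lie weakly on one side. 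In that case the constraining hyperplanes through $u$ either contain the segment $\overline{ab}$ or cut it in a single point, which makes the space at most one point. If $u \notin H$, every new half-space contains the cone condition already satisfied by $\overline{ab}$ via Lemma \ref{contractpointspace4}, so the entire segment $\overline{ab}$ survives. The subtle point is justifying the dichotomy, namely that no constraint cuts $\overline{ab}$ in a proper subsegment. I would argue this from the flatness in Proposition \ref{convwallspan}: all the relevant hyperplanes share the codimension-2 span of faces lying in $H$, so they meet $\Span(a,b)$ uniformly.
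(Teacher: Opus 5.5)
Both parts of your proposal have genuine gaps, and in each the missing ingredient is the mechanism the paper actually uses.

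\textbf{Part 1.} Your bound of $d$ walls rests on two claims: that $H\cap|\St_\Delta(r)|$ is a single $(d-2)$-simplex subdivided without interior vertices, and that the walls therefore use at most $d$ vertices. Neither is supported.
\begin{itemize}
\item Nothing you cite makes the region a simplex.
\item Proposition \ref{starunionadj} only forbids edges between $\lk_\Delta(a)\setminus\lk_\Delta(b)$ and $\lk_\Delta(b)\setminus\lk_\Delta(a)$. It says nothing about a vertex adjacent to both $r$ and $s$, so it cannot exclude interior vertices.
\item A subdivision with no interior vertices can still place arbitrarily many vertices on the relative boundary. So no vertex bound, and hence no wall bound, follows.
\end{itemize}
Your fallback presupposes the same vertex bound.

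The paper's count comes from flatness instead. Suppose $\tau$ and $\omega=(\tau\setminus p)\cup q$ are walls in $\lk_\Delta(a)\cap\lk_\Delta(y)$ with the same span. Convexity of the crossing between $\tau\cup a$ and $\tau\cup y$ with respect to $p$ says that $\Span(\tau\setminus p,a)$ does not separate $p$ and $y$. Convexity of the crossing between $\omega\cup a$ and $\omega\cup y$ with respect to $q$ says that $\Span(\omega\setminus q,a)=\Span(\tau\setminus p,a)$ does not separate $q$ and $y$. But $p$ and $q$ are the off-wall vertices of the crossing from $\tau\cup a$ to $\omega\cup a$, so they lie on opposite sides of this hyperplane. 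This forces $y\in\Span(\tau\setminus p,a)$. Each further wall crossing inside $\lk_\Delta(a)\cap\lk_\Delta(y)$ pushes $y$ into a subspace of one lower dimension, and that is what caps the number of walls.

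\textbf{Part 2.} Your case $u\in H$ cannot occur. Every wall $\tau$ of the common-span region lies in $\lk_\Delta(y)$ or $\lk_\Delta(z)$. The vertices of the facet $\tau\cup y$ (or $\tau\cup z$) are linearly independent, so that vertex is not in $H=\Span(\tau)$. Since $y,z$ lie weakly on one side of $H$, the interior point $u$ is strictly off $H$.

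Your remaining case concludes that all of $\overline{ab}$ survives by citing Lemma \ref{contractpointspace4}. That lemma concerns the on-wall half-spaces of Lemma \ref{contractpointspace3}, not the off-wall half-spaces of Lemma \ref{contractpointspace2}, which are precisely the ones that change under the contraction.

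More basically, a half-space whose boundary hyperplane meets $\overline{ab}$ in one point only truncates $\overline{ab}$ to a subsegment. The dichotomy needs \emph{both} half-spaces of a single hyperplane to be imposed, and that is the paper's key step:
\begin{itemize}
\item By the pseudomanifold property and flagness, the vertices of $\lk_\Delta(a)\cup\lk_\Delta(b)$ meet both $\lk_\Delta(y)\setminus\lk_\Delta(z)$ and $\lk_\Delta(z)\setminus\lk_\Delta(y)$.
\item Proposition \ref{convwallspan} then yields two walls of the region sharing all but one vertex, one opposite $y$ and one opposite $z$.
\item After contracting $\{y,z\}$ to $u$, these give a wall crossing of $\widetilde{\Delta}$ with $u$ on the wall and both off-wall vertices in $\lk_\Delta(a)\cup\lk_\Delta(b)$.
\item Each off-wall vertex plays the role of $r$ in Lemma \ref{contractpointspace2}, so both half-spaces are imposed and the contraction point space lies in the hyperplane spanned by that wall.
\end{itemize}
A hyperplane meets $\overline{ab}$ in at most one point or contains it, which gives the claimed dichotomy.
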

	
	\begin{proof}
		\begin{enumerate}
			\item Without loss of generality, consider a wall $\tau$ of $\Delta$ lying in $(\lk_\Delta(a) \cup \lk_\Delta(b)) \cap (\lk_\Delta(y) \cup \lk_\Delta(z))$ that is opposite to $a$ and $y$. If there is another wall opposite to both $a$ and $y$, then Proposition \ref{starunintspan} and Proposition \ref{convwallspan} imply that there is one obtained by a ``wall crossing'' replacing a single vertex of $\tau$. Let $\omega \in \Delta$ be such a wall. Then, $\omega = (\tau \setminus p) \cup q$ for some $p \in \tau$ and $q \in (\lk_\Delta(a) \cup \lk_\Delta(b)) \cap (\lk_\Delta(y) \cup \lk_\Delta(z))$. Note that $\Span(\omega) = \Span(\tau)$ by Proposition \ref{starunintspan}. \\
			
			We will consider convexity of wall crossings with respect to vertices of $\tau \cap \omega = \tau \setminus p$ and $q = \omega \setminus (\tau \cap \omega)$. Let $F \coloneq \tau \cup a$, $F' \coloneq \tau \cup y$, $G \coloneq \omega \cup a = (F \setminus p) \cup q$, and $G' \coloneq \omega \cup y = (F' \setminus p) \cup q$. We claim that the wall crossings associated to $F \cup F'$ and $G \cup G'$ are flat with respect to $p$ and $q$ respectively. Since the wall crossing associated to $G \cup G'$ is convex with respect to $q$, the hyperplane $\Span(\omega \setminus q, a) = \Span(\tau \setminus p, a)$ does \emph{not} separate the vertices $q$ and $y$. Similarly, the convexity of the wall crossing associated to $F \cup F'$ with respect to $p$ implies that the hyperplane $\Span(\tau \setminus p, a)$ does \emph{not} separate $p$ and $y$. Since $G = (F \setminus p) \cup q$, the facet $G$ is obtained from $F$ via a wall crossing with $p$ and $q$ as off-wall vertices. This implies that $p$ and $q$ lie on opposite sides of the hyperplane $\Span(\tau \setminus p, a)$. Thus, we have that $y \in \Span(\tau \setminus p, a) = \Span(\omega \setminus q, a)$. In particular, this means that the wall crossing associated to $F \cup F'$ is flat with respect to $p$ and that the one associated to $G \cup G'$ is flat with respect to $q$. \\

			We can repeat similar arguments for subsequent wall crossings within $(\lk_\Delta(a) \cup \lk_\Delta(b)) \cap (\lk_\Delta(y) \cup \lk_\Delta(z))$. For example, suppose that $\eta$ is a wall of $\Delta$ formed by replacing a vertex $m \in \omega$ by a vertex $r$ in a wall crossing within $\lk_\Delta(a) \cap \lk_\Delta(y)$. Then, the the vertex $y$ lies in a codimension 2 linear subspace. We can generalize this argument to higher dimensions. In each turn, the new vertex we are flat with respect to is \emph{not} in the span of the previous ``centers of convexity'' considered. If there are $d$ consecutive ``wall crossings'' of $\lk_\Delta(a) \cap \lk_\Delta(y)$ (e.g. if there are $d + 1$ such facets of $\Delta$), the steps above would each decrease the dimension of linear subspaces we can choose $y$ from by 1. In other words, each wall crossing among facets of $\lk_\Delta(a) \cap \lk_\Delta(y)$ (which are walls in $\Delta$) increases the number of on-wall vertices with coefficient 0 in a wall relation in $\Delta$ using $a$ and $y$ as off-wall vertices by 1. \\

			\item The objective is to show that there is a hyperplane bounding a half-space from Lemma \ref{contractpointspace2} where both of the half-spaces bounded by this hyperplane are intersected there. A wall $\tau \in \lk_\Delta(m) \cap \lk_\Delta(n)$ with $m \in \{ a, b \}$ and $n \in \{ y, z \}$ is \emph{not} adjacent to $\{ y, z \} \setminus n$. This is implied by $\Delta$ being a simplicial pseudomanifold since each codimension 1 face is contained in exactly 2 facets (see Definition \ref{pseudomfldwallcross}). The flagness of $\Delta$ (implied by local convexity by Proposition 5.3 on p. 279 -- 280 of \cite{LR}) then forces the existence of ``bad'' vertices of $\tau$ that are not attached to $\{ y, z \} \setminus n$. Then, the vertex set of $\lk_\Delta(a) \cup \lk_\Delta(b)$ intersects both those of $\lk_\Delta(y) \setminus \lk_\Delta(z)$ and $\lk_\Delta(z) \setminus \lk_\Delta(y)$. This would mean that there are at least 2 distinct walls of $\Delta$ that are in $(\lk_\Delta(a) \cup \lk_\Delta(b)) \cap (\lk_\Delta(y) \cup \lk_\Delta(z))$. Then, Proposition \ref{convwallspan} implies that there is a pair of such walls of $\Delta$ sharing all but one vertex where one wall is opposite to $y$ and the other is opposite to $z$. After contraction of the edge $\{ y, z \}$, we end up with a pair of facets of $\widetilde{\Delta}$ sharing a wall that contain the contraction point $u$ of $\{ y, z \}$ as an on-wall vertex. The off-wall vertices of this wall crossing lie in $(\lk_\Delta(a) \cup \lk_\Delta(b)) \cap (\lk_\Delta(y) \cap \lk_\Delta(z))$. This implies that both half-spaces bounded by the hyperplane spanned by this wall are intersected in Lemma \ref{contractpointspace2} and intersection with the line segment $\overline{ab}$ has the properties claimed in the statement. \\

		\end{enumerate}
	\end{proof}

	Another instance where linear interpolations have this wide range of behaviors comes from certain external edge subdivisions. In fact, the same reasoning as Example \ref{contrcrosspolytop} (Lemma \ref{contractpointspace1}) describes shrinking contraction point spaces induced by certain external edge subdivisions although edge subdivisions seem to increase availability of convex wall crossings (Proposition \ref{edgesubdivwallconv}). We note that the reasoning for such cases extend beyond contraction point spaces realized by linear interpolations. As a whole, we have a mixture of expected and unexpected behavior induced by changes of the contraction point space of fixed edge $e = \{ a, b \} \in \Delta$ induced by external edge subdivisions. This depends on initial adjacency properties of the external edge of $\Delta$ contracted. \\

	\color{black}
	
	\begin{thm} \textbf{(External edge subdivisions vs. space of contraction points) \\} \label{exsubdivcon}
		Let $\Delta$ be a locally convex $(d - 1)$-dimensional simplicial pseudomanifold. In edge subdivisions/contractions, we will assume that vertices of $\Delta$ will keep the same $k^d$-coordinate representation unless they are one of the two vertices merged in an edge contraction. \\

		The (stellar) subdivision of an edge $\{ y, z \} \in \Delta$ can only affect the space of contraction points $w$ with respect to $e = \{ a, b \}$ if $y$ and $z$ are either in $\St_\Delta(a) \cup \St_\Delta(b)$ or opposite to $a$ or $b$ in a wall crossing of $\Delta$ using $a$ or $b$ as the initial off-wall vertex. We will assume that $|\St_\Delta(a)| \cup |\St_\Delta(b)|$ is convex so that the contraction point space of $e$ in $\Delta$ is not automatically empty by Lemma \ref{contractpointspace1}. Among such edges $\{ y, z \} \in \Delta$, we split into the following cases: \\
		\begin{enumerate}
			\item Suppose the subdivided edge $\{ y, z \} \in \Delta$ does \emph{not} lie in $|\St_\Delta(a)| \cup |\St_\Delta(b)|$. \\
			\begin{enumerate}
				\item If $y, z \notin \lk_\Delta(a) \cup \lk_\Delta(b)$, there is no change in the space of contraction points $w$ with respect to the edge $e = \{ a, b \}$. \\

				\item Assume the subdivided edge is of the form $\{ q, z \}$ for some $q \in \lk_\Delta(a) \cup \lk_\Delta(b)$. Then, the space of contraction points $w$ with respect to the edge $e = \{ a, b \}$ contains that of $\Delta$. \\
			\end{enumerate}
			
			\item Suppose the subdivided edge $\{ y, z \} \in \Delta$ lies inside $|\St_\Delta(a)| \cup |\St_\Delta(b)|$.
			\begin{enumerate}
				\item If $y, z \notin \{ a, b \}$, then the space of contraction points with respect to $w$ is contained in the intersection of the one for $\Delta$ and the intersection of spans of all walls containing the subdividing point (say $m$) and a vertex $a'$ or $b'$ opposite from $a$ or $b$. This comes from the wall containing the second off-wall vertex in a wall crossing containing $m$ in the wall that replaces $a$ with $a'$ or $b$ with $b'$. Thus, the linear contraction point space is the empty set, a single point, or all of $\overline{ab}$. \\
				
				\item Assume that the subdivided edge is of the form $\{ a, z \}$ or $\{ b, z \}$. If $z \in \lk_\Delta(a) \cap \lk_\Delta(b)$, the space of contraction points $w$ with respect to the edge $e = \{ a, b \}$ preserving convexity of wall crossings is empty. In general, the space of (linear) contraction points preserving convexity of wall crossings is empty, a single point, or the entire line segment $\overline{ab}$. \\

			\end{enumerate}
		\end{enumerate}

	\end{thm}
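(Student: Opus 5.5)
The plan is to use Theorem \ref{contractpointspaceexpress} as a checklist. For a fixed edge $e = \{ a, b \}$, the linear contraction point space is cut out of $\overline{ab}$ by three kinds of conditions:
\begin{enumerate}
\item the on-wall condition of Lemma \ref{contractpointspace1}, namely convexity of $|\St_\Delta(a)| \cup |\St_\Delta(b)|$;
\item the off-wall half-spaces of Lemma \ref{contractpointspace2}, indexed by $r \in \lk_\Delta(a) \cup \lk_\Delta(b)$ and the opposite vertices $a'_F, b'_F$;
\item the on-wall half-spaces of Lemma \ref{contractpointspace3}.
\end{enumerate}
Lemma \ref{contractpointspace4} makes the third kind redundant along $\overline{ab}$. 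A stellar subdivision of $\{ y, z \}$ only changes faces of $\St_\Delta(\{ y, z \})$, so the first step is to track which of these data change. Only data involving $\St_\Delta(a) \cup \St_\Delta(b)$ or the opposite vertices $a'_F, b'_F$ enter the intersection, which gives the preliminary localization claim. Throughout, the fixed coordinates of old vertices and Proposition \ref{edgesubdivwallconv} guarantee that the subdivided complex still has convex wall crossings, so the contraction point space stays well defined.

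For Case 1(a), neither $y$ nor $z$ is in $\lk_\Delta(a) \cup \lk_\Delta(b)$. Then no wall of $\lk_\Delta(a) \cup \lk_\Delta(b)$, and no half-space $\Span(((F \setminus a) \cup a'_F) \setminus r)^+_r$ with $r$ in these links, is altered. The subdividing vertex lies on $\overline{yz}$ outside the union of stars, so every term in the intersection of Theorem \ref{contractpointspaceexpress} is unchanged.

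For Case 1(b), the edge is $\{ q, z \}$ with $q$ in the link. The facets $F \ni q$ whose opposite vertex is $z$ now have opposite vertex $v$, the subdividing point on $\overline{qz}$, or are split. I would redo the coefficient computation of Proposition \ref{edgesubdivwallconv} with $x_v = (1-\eta)x_q + \eta x_z$. Each new bounding hyperplane $\Span(\ldots, v)$ defines a half-space containing the old one intersected with $|\St_\Delta(a)| \cup |\St_\Delta(b)|$, because $v$ is a convex combination of $q$ and $z$. Hence the new intersection contains the old one.

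For Case 2, the new vertex $m$ lies in $|\St_\Delta(a)| \cup |\St_\Delta(b)|$.
\begin{enumerate}
\item In Case 2(a), $m$ becomes a vertex of $\lk_\Delta(a) \cup \lk_\Delta(b)$, and there are wall crossings that replace $a$ by $a'$ with $m$ on the wall. Convexity with respect to $m$ in the contracted complex requires $w$ not to be separated from $m$ by the relevant hyperplanes. The wall relation $(1-\eta)x_y + \eta x_z - x_m = 0$ makes these crossings flat, exactly as in Proposition \ref{edgesubdivwallconv}, so the condition degenerates to an equality $w \in \Span(\ldots)$. Intersecting with the line through $a$ and $b$ then gives the empty set, a point, or all of $\overline{ab}$.
\item In Case 2(b), with $z \in \lk_\Delta(a) \cap \lk_\Delta(b)$, I would mimic Example \ref{contrcrosspolytop}. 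The subdivision makes $z$ no longer adjacent to $a$, while the vertex $m$ on $\overline{az}$ and the vertex $z$ in $\lk_\Delta(b)$ produce a segment not contained in the new union of stars, so Lemma \ref{contractpointspace1} fails. In the general subcase, the same flatness argument as in 2(a) gives the trichotomy.
\end{enumerate}

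The main obstacle is Case 2(b): showing that the union of the new stars of $a$ and $b$ really becomes nonconvex. I would first try to exhibit an induced $4$-cycle $a, m, z, b$ (up to relabeling), mirroring the cross polytope argument, and then apply Proposition \ref{starunionadj}.
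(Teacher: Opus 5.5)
Your Cases 1(a) and 1(b) follow the paper. The union of stars is unchanged, a facet opposite to $a$ or $b$ has only one vertex outside $\lk_\Delta(a)\cup\lk_\Delta(b)$, and in 1(b) only the half-spaces with $r=q$ move. The reason the new half-space contains the old one is that $|\St_\Delta(a)|\cup|\St_\Delta(b)|$ lies on $a$'s side of the old wall; $v\in\overline{qz}$ alone does not give it.

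\textbf{Case 2(a): the degeneracy comes from a different source.} The relation $(1-\eta)x_y+\eta x_z-x_m=0$ is the wall relation of the crossing between the two halves $(F\setminus z)\cup m$ and $(F\setminus y)\cup m$ of a split facet $F\supset\{y,z\}$. Its off-wall vertices are $y$ and $z$, and it involves neither $a'$ nor $w$, so it imposes nothing on $w$. Convexity with respect to $m$ of the crossing from $(F\setminus\{a,z\})\cup\{m,w\}$ to $(F\setminus\{a,z\})\cup\{m,a'\}$ is governed by $\Span((F\setminus\{a,z\})\cup a')$. That is an old hyperplane containing $y$, so $m$ lies strictly on $z$'s side, and the condition is just the old one for $r=z$. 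Nothing degenerates there.

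The equality actually comes from Lemma \ref{contractpointspace2} applied with two different on-wall vertices. Take $F\supset\{a,y,z\}$ and let $a'$ be opposite $a$ across $F\setminus a$. Then $(F\setminus a)\cup a'$ also contains $\{y,z\}$ and is split, and the new walls $(F\setminus\{a,z\})\cup m$ and $(F\setminus\{a,y\})\cup m$ each separate a facet containing $a$ from one containing $a'$. Taking $r=y$ on the first and $r=z$ on the second produces half-spaces bounded by the same hyperplane $H=\Span((F\setminus\{a,y,z\})\cup\{m,a'\})$, one containing $y$ and one containing $z$. Since $m\in H$ is interior to $\overline{yz}$, these are opposite closed half-spaces, which forces $w\in H$.

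Your general subcase of 2(b) has the same gap. There the opposite half-spaces come from $r=\alpha$ and $r=\beta$, the off-wall vertices of a wall $\tau\supset\{a,z\}$ of $\Delta$. Both are bounded by $\Span((\tau\setminus a)\cup m)=\Span(\tau)$, which contains $a$.

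\textbf{Case 2(b) with $z\in\lk_\Delta(a)\cap\lk_\Delta(b)$: your witnesses fail.} For a facet $F\supset\{a,b,z\}$, the half $(F\setminus a)\cup m$ contains $b$, $m$ and $z$. So $\overline{mz}\subset|\St_{\Delta'}(b)|$ and $\{m,b\}\in\Delta'$. Among $a,m,z,b$ the only non-edge is $\{a,z\}$, so no labelling of these four vertices gives an induced $4$-cycle, and Proposition \ref{starunionadj} has nothing to act on.

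The missing vertex is $s$, the vertex opposite $b$ across $F\setminus b$.
\begin{itemize}
\item Flagness gives $s\notin\lk_\Delta(b)$, hence $s\notin\lk_{\Delta'}(b)$.
\item The halves $(F\setminus\{b,z\})\cup\{s,m\}$ and $(F\setminus\{a,b\})\cup\{s,m\}$ of $(F\setminus b)\cup s$ show that $s\in\lk_{\Delta'}(a)$ and $\{s,z\}\in\Delta'$.
\item Meanwhile $z\in\lk_{\Delta'}(b)\setminus\lk_{\Delta'}(a)$.
\end{itemize}
So $a,s,z,b$ is an induced $4$-cycle of $\Delta'$. Proposition \ref{starunionadj} applied in $\Delta'$ then shows $|\St_{\Delta'}(a)|\cup|\St_{\Delta'}(b)|$ is not convex; this is the paper's segment $\overline{zs}$. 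Emptiness then follows from Lemma \ref{contractpointspace1}.
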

	
	\begin{proof}
		In each part, we will split into cases according to the type of external edge considered. The relevant properties of the edges will be whether the edge lies in $|\St_\Delta(a)| \cup |\St_\Delta(b)|$ and whether it uses one of $a$ or $b$ as its vertices. For inputs into Theorem \ref{contractpointspaceexpress}, we will keep track of changes in the half-spaces intersected in Lemma \ref{contractpointspace1} and Lemma \ref{contractpointspace2}. \\

		The initial assumptions on the subdivided edge $\{ y, z \} \in \Delta$ before splitting into cases filters out the edges that whose subdivision affects $\St_\Delta(a) \cup \St_\Delta(b)$ or edges of $\St_\Delta(q)$ attached to a vertex opposite to $a$ or $b$ in a wall crossing using $a$ or $b$ as an initial off-wall vertex for some $q \in \lk_\Delta(a) \cup \lk_\Delta(b)$. This is based on the parameters in the intersections from Theorem \ref{contractpointspaceexpress}.   \\
		\begin{enumerate}
		
			\item Suppose that the subdivided edge does \emph{not} lie in $|\St_\Delta(a)| \cup |\St_\Delta(b)|$. In order for the edge subdivision to possibly affect the contraction space for $e = \{ a, b \}$, the subdivided edge must lie in $\St_\Delta(q)$ for some $q \in \lk_\Delta(a) \cup \lk_\Delta(b)$. We note that $q \in \lk_\Delta(a) \setminus \lk_\Delta(b)$ or $q \in \lk_\Delta(b) \setminus \lk_\Delta(a)$ for vertices used for the half-space parameters in Lemma \ref{contractpointspace2} since facets of $\Delta$ containing both $a$ and $b$ lie in the \emph{interior} of $|\St_\Delta(a)| \cup |\St_\Delta(b)|$. We will further split into cases according to whether one of the vertices of the subdivided edge $\{ y, z \}$ is equal to a vertex $q \in \lk_\Delta(a) \cup \lk_\Delta(b)$ or not. In the cases below, the support of the union of the stars over $a$ and $b$ does \emph{not} change after subdividing $\{ y, z \}$. If this space is \emph{not} convex, the contraction point space of $\{ a, b \}$ is still empty even after subdividing $\{ y, z \}$. Thus, we will assume that $|\St_\Delta(a)| \cup |\St_\Delta(b)|$ in the cases below.  \\
			
			\begin{enumerate}
				\item If neither of the vertices of the subdivided edge $\{ y, z \} \in \Delta$ lie in $\lk_\Delta(a) \cup \lk_\Delta(b)$, then a facet $F \in \Delta$ with a vertex opposite to $a$ or $b$ cannot have $F \supset \{ y, z \}$. This is because a wall crossing replaces only one of the vertices and all vertices except the one opposite to $a$ or $b$ still lie in $\lk_\Delta(a) \cup \lk_\Delta(b)$. However, we assumed that neither $y$ nor $z$ lie in $\lk_\Delta(a) \cup \lk_\Delta(b)$. This means that none of the facets with vertices opposite to $a$ or $b$ are affected by the subdivision of the edge $\{ y, z \} \in \Delta$. The union $|\St_\Delta(a)| \cup |\St_\Delta(b)|$ also stays the same. Then Lemma \ref{contractpointspace1} and Lemma \ref{contractpointspace2} imply that there is no change in the space of contraction points $w$ with respect to the edge $e = \{ a, b \}$. \\  
				
				\item Now suppose that the subdivided edge is of the form $\{ q, z \}$ for some $q \in \lk_\Delta(a) \cup \lk_\Delta(b)$. Again, let $m$ be the subdividing vertex. As mentioned previously, the facets to consider are those coming from facets of $\Delta$ containing both $q$ and $z$. Since $\{ q, z \} \notin |\St_\Delta(a)| \cup |\St_\Delta(b)|$, there is no effect on $|\St_\Delta(a)| \cup |\St_\Delta(b)|$. If $z$ is \emph{not} opposite to $a$ or $b$, then Theorem \ref{contractpointspaceexpress} implies that there is no effect on the contraction point space. Thus, we will focus on the case where $z$ is opposite to $a$ or $b$. \\
				
				As mentioned above, the possible facets yielding altered half-spaces (see Lemma \ref{contractpointspace2}) are those involving $F \in \Delta$ such that $F \supset \{ q, z \}$. Since $z$ is opposite to $a$ or $b$ and $q \in \lk_\Delta(a) \cup \lk_\Delta(b)$, the new facet with a vertex opposite to $a$ is the one where $z$ is replaced by the subdividing vertex $m$. If we had replaced $q$ instead of $z$, the resulting facet would no longer share a wall with $\lk_\Delta(a) \cup \lk_\Delta(b)$. For walls inside $F$ that contain both $q$ and $z$, their counterparts in the edge subdivision with $m$ replacing $z$ do \emph{not} change the hyperplanes spanned by them since we assumed that $x_m = (1 - \alpha) x_q + \alpha x_z$ for some $\alpha \in (0, 1)$. This means that the sources of possible changes with respect to half-spaces to consider are from walls of the form $F \setminus q$ from facets $F \supset \{ q, z \}$. In other words, the possible changes come from convexity with respect to $q$. \\
				
				To study possible changes in convexity with respect to $q$, we will directly use properties of coefficients of ``wall relations'' where the second off-wall vertex is given by $a$ or $b$ in the wall crossing using $F$ as the initial facet and $z$ as the initial off-wall vertex. If $z$ is opposite to $a$ (respectively $b$), then the vertex $a$ (respectively $b$) automatically lies in the half-space associated to a vertex $r \in \lk_\Delta(a) \setminus \lk_\Delta(b)$ from Lemma \ref{contractpointspace2} by Proposition \ref{convsep}. Without loss of generality, suppose that $z$ is opposite to $a$. Our initial convexity assumptions on $\Delta$ imply that $|\St_\Delta(z)|$ is convex and $z \notin \Span(F \setminus z)$ since the realization of vertices of a face of a simplicial complex are linearly independent. Since the vertices of the realization of $F$ in $k^d$ are linearly independent and $F \supset \{ q, z \}$, we have a ``wall relation'' of the form \[ c_z x_z + c_b x_b + \sum_{p \in F \setminus z} c_p x_p = 0 \] with $c_z \ge 0$ and $c_b > 0$. This is because $a$ and $b$ are on the same side of the hyperplane $\Span(F \setminus z)$ by our assumption that $|\St_\Delta(a)| \cup |\St_\Delta(b)|$ is convex. We can rewrite this as \[ c_z x_z + c_b x_b + c_q x_q + \sum_{p \in F \setminus \{ z, q \} } c_p x_p = 0. \] 
				
				If $c_q = 0$, then replacing $a$ by $b$ yields a ``flat wall crossing'' with respect to $q$ and \emph{any} point on the line segment $\overline{ab}$ would yield a contraction point preserving convexity with respect to $q$. Suppose that $c_q \ne 0$. If we ``solve'' for $x_m$ via the substitution $x_z = \frac{1}{\alpha} x_m - \frac{1 - \alpha}{\alpha} x_q$ from $x_m = (1 - \alpha) x_q \alpha x_z$, we have

				\begin{align*}
					c_z \left( \frac{1}{\alpha} x_m - \frac{1 - \alpha}{\alpha} x_q \right) + c_b x_b + c_q x_q + \sum_{p \in F \setminus \{ z, q \} } c_p x_p &= 0 \\ 
					\Longrightarrow  \frac{c_z}{\alpha} x_m + c_b x_b + \left( c_q - \frac{1 - \alpha}{\alpha} c_z \right) x_q + \sum_{p \in F \setminus \{ z, q \} } c_p x_p &= 0.
				\end{align*}
				
				\color{black}
				The coefficient of $x_q$ becomes ``more negative'' since $c_z \ge 0$. Since the coefficient of $x_b$ is still $c_b$ (which is strictly positive), the coefficient of $x_q$ in the expansion of $x_b$ is more negative with respect to the $((F \setminus z) \cup m)$-basis expansion of $k^d$ than the $F$-basis expansion. Note that $c_q \le 0$ when we use $a$ in place of $b$ since $a$ respects convexity with respect to $q$. The decrease in the former coefficient implies that a larger portion of the line segment $\overline{ab}$ (i.e. more values $\eta \in (0, 1)$) yielding suitable $(1 - \eta) x_a + \eta x_b$) compatible with convexity with respect to $q$. When $\eta \in (0, 1)$, using $x_w = (1 - \eta) x_a + \eta x_b$ in place of $x_z$ above gives a wall relation with strictly positive coefficients of $x_w$ and $x_b$. This can be expressed more concretely in terms of wall relations. Let \[ \alpha_m x_m + \alpha_a x_a + \alpha_q x_q + \sum_{p \in F \setminus \{ z, q \}} \alpha_p x_p = 0 \] be the wall relation in $\Delta'$ with $\alpha_m > 0$, $\alpha_a > 0$, $\alpha_q \le 0$, and $\alpha_p \le 0$ for $p \in F \setminus \{ z, q \}$ (see Proposition \ref{edgesubdivwallconv}). Since \[ \frac{c_z}{\alpha} x_m + c_b x_b + \left( c_q - \frac{1 - \alpha}{\alpha} c_z \right) x_q + \sum_{p \in F \setminus \{ z, q \} } c_p x_p = 0, \] multiplying the first relation by $\frac{1 - \eta}{\alpha_a}$ and the second one by $\frac{\eta}{c_b}$ before adding them together implies that \[ \left( \frac{(1 - \eta) \alpha_m}{\alpha_a} + \frac{\eta c_z}{\alpha c_b} \right) x_m + x_w + \left( \frac{(1 - \eta) \alpha_q}{\alpha_a} + \frac{\eta \left( c_q - \frac{1 - \alpha}{\alpha} c_z \right) }{c_b} \right) x_q + \sum_{p \in F \setminus \{ z, q \}} \left( \frac{(1 - \eta) \alpha_p}{\alpha_a} + \frac{\eta c_p}{c_b} \right) x_p = 0 \] in the edge contraction $\widetilde{\Delta}$ since $x_w = (1 - \eta) x_a + \eta x_b$.   \\

			Finally, we note that the subdividing point $m$ of $\{ y, z \}$ does \emph{not} occur as an on-wall vertex of a wall crossing involving the contraction point $w$ with respect to $e = \{ a, b \}$. This is because $\{y, z \} \notin \St_\Delta(a) \cup \St_\Delta(b)$. Then, the convexity of wall crossings with respect to $m$ is unaffected by the choice of contraction point $w$ of $e = \{ a, b \}$. The computations in Proposition \ref{edgesubdivwallconv} imply that they are always convex when we take $x_m = (1 - \alpha) x_q + \alpha x_z$ for some $\alpha \in (0, 1)$. \\

		\end{enumerate}

		\item Now suppose that the subdivided edge $\{ y, z \}$ \emph{does} lie in $|\St_\Delta(a)| \cup |\St_\Delta(b)|$. We will split into cases according to whether one of the vertices is in $\{ a, b \}$ or not. Assume that $|\St_\Delta(a)| \cup |\St_\Delta(b)|$ is convex. \\
		
		\begin{enumerate}
			\item Suppose that the subdivided edge does \emph{not} involve $a$ or $b$. Since $x_m = (1 - \alpha) x_y + \alpha x_z$ for some $\alpha \in (0, 1)$, we have that $|\St_{\Delta'}(a)| \cup |\St_{\Delta'}(b)| = |\St_\Delta(a)| \cup |\St_\Delta(b)|$ is a convex set. By Proposition \ref{starunionadj}, $a$ or $b$ must be a common vertex that $y$ and $z$ are both adjacent to in $\Delta$. Without loss of generality, suppose that this vertex is $b$. The half-space changes come from walls of facets containing $\{ y, z \}$ that contain an off-wall vertex $a'$ or $b'$ replacing $a$ or $b$ in a wall crossing. Note that the edge $\{ y, z \} \in \Delta$ must lie on the wall in such a wall crossing. Since $y, z \in \lk_\Delta(b)$, we need to intersect half-spaces pointing in both directions of each wall containing the subdividing vertex $m$ and $b'$. This means that both $y$ and $z$ play the role of $r$ in Lemma \ref{contractpointspace2} and we need to consider the intersection of their spans. Since the sets intersected include hyperplanes, the intersection with the line segment $\overline{ab}$ is the empty set, a single point, or all of $\overline{ab}$. \\

			\item Now assume that the subdivided edge in $|\St_\Delta(a)| \cup |\St_\Delta(b)|$ is of the form $\{ a, z \}$ or $\{ b, z \}$. Without loss of generality, suppose that it is $\{ a, z \}$ for some $z \in \lk_\Delta(a)$. \\
			
			In certain cases, it may be possible to have failure of convexity of $|\St_{\Delta'}(a)| \cup |\St_{\Delta'}(b)|$. Suppose that $z \in \lk_\Delta(a) \cap \lk_\Delta(b)$. In the edge subdivision $\Delta'$ with respect to $\{ a, z \}$, we have that $z \in \lk_{\Delta'}(b) \setminus \lk_{\Delta'}(a)$. Let $F \in \Delta$ be a facet of $\Delta$ containing $a$, $b$, and $z$. Since $\Delta$ is a simplicial pseudomanifold, there is exactly one vertex $s \in V(\Delta)$ that such that $s \ne b$ and $(F \setminus b) \cup s \in \Delta$. Consider the line segment $\overline{zs}$. While $z \in \lk_{\Delta'}(b)$ and $s \in \lk_{\Delta'}(a)$, we have that $\{ z, s \} \notin \St_{\Delta'}(a) \cup \St_{\Delta'}(b)$. If $|\St_{\Delta'}(a)| \cup |\St_{\Delta'}(b)|$ were convex, this would imply that the line segment $\overline{zs} \subset |\St_{\Delta'}(a)| \cup |\St_{\Delta'}(b)|$. However, points on the interior of the line segment $\overline{zs}$ do \emph{not} lie in $|\St_{\Delta'}(a)| \cup |\St_{\Delta'}(b)|$ since the coefficient of $a$ must be at least as large as that of $z$ in the $F$-basis expansion of $k^d$-points in the realization of $(F \setminus z) \cup m \in \Delta'$ (a convex linear combination of its vertices). This implies that $|\St_{\Delta'}(a)| \cup |\St_{\Delta'}(b)|$ is \emph{not} convex since $z \in \lk_{\Delta'}(b) \setminus \lk_{\Delta'}(a)$ and $s \in \lk_{\Delta'}(a) \setminus \lk_{\Delta'}(b)$. Thus, it is not possible for contraction of the edge $\{ a, z \} \in \Delta$ to preserve convexity of wall crossings if $z \in \lk_\Delta(a) \cap \lk_\Delta(b)$. \\ 
			
			From now on, we will assume that $z \in \lk_\Delta(a) \setminus \lk_\Delta(b)$. This means that $z \notin |\St_{\Delta'}(a)| \cup |\St_{\Delta'}(b)|$. Since $z \notin \lk_\Delta(b)$, faces that are adjacent to both $a$ and $z$ in $\Delta$ are \emph{not} adjacent to $b$. Consider wall crossings in $\Delta'$ using $z$ and $a$ as off-wall vertices. Given a wall $\tau \in \Delta$ such that $\tau \supset \{ a, z \}$, the wall $(\tau \setminus a) \cup m \in \Delta'$ is attached to exactly two vertices of $\Delta'$ (which were vertices of $\Delta$). Both facets of $\Delta'$ containing $(\tau \setminus a) \cup m$ are in $\St_\Delta(a) \setminus \St_\Delta(b)$. This is because faces that are adjacent to both $a$ and $z$ in $\Delta$ are \emph{not} adjacent to $b$ and $|\St_\Delta(a)| \cup |\St_\Delta(b)|$ is taken to be convex (see Proposition \ref{starunionadj}). Both half-spaces bounded by the hyperplane $\Span((\tau \setminus a) \cup m)$ are intersected since $\alpha, \beta \in \lk_\Delta(a) \setminus \lk_\Delta(b)$. In other words, $\alpha$ and $\beta$ can both take the role of $r \in \lk_\Delta(a) \cup \lk_\Delta(b)$ from Lemma \ref{contractpointspace2}. The intersection of the hyperplane $\Span((\tau \setminus a) \cup m)$ with the line segment $\overline{ab}$ is either the entire line segment $\overline{ab}$, a single point, or the empty set.

		\end{enumerate}

	\end{enumerate}

	\end{proof}

	The other side of the snapshots of changes of the contraction point space of $e = \{ a, b \} \in \Delta$ from PL homeomorphisms induced by external edges comes from edge contractions. Here, there can be a wide range of outcomes even within the same adjacency conditions. This may be a reflection of PL homeomorphisms being a topological equivalence although we do have compatibility with the simplicial complex structure. The first parameter to consider is whether hyperplanes bounding half-spaces from Theorem \ref{contractpointspaceexpress} separate the vertices $a$ and $b$. If they do, the specific choice of contraction point space is important. It involves comparing sizes of projections to these hyperplanes. Normalizing for sizes of $k^d$-vectors realizing the vertices would be a comparison of the angles made with the hyperplanes.  \\
	
	Before giving a specific example illustrating the considerations discussed above (Corollary \ref{abedgeconlin}), we will first organize them into a general framework for analyzing such cases.  \\

	\begin{prop} \textbf{(Framework for external edge contractions) \\}
		\label{genextcon}
		
		Let $\Delta$ be a locally convex $(d - 1)$-dimensional simplicial pseudomanifold. As in Proposition \ref{exsubdivcon}, assume that vertices of $\Delta$ will keep the same $k^d$-coordinate representation unless they are one of the two vertices merged in an edge contraction. \\

		Consider contractions of edges $\{ y, z \} \in \Delta$ to points $u$ that preserve the convexity of supports of stars over each vertex. Assume that the external contractions involved are linear interpolations with $x_u = (1 - \eta) x_y + \eta x_z$ in $k^d$ for some $\eta \in (0, 1)$. \\ 
		
		\begin{enumerate}
			\item \textbf{(Nontrivial external edges) \\}
			The contraction $\widetilde{\Delta}$ of the edge $\{ y, z \} \in \Delta$ can only affect the space of contraction points $w$ with respect to $e = \{ a, b \}$ only if at least one of $y$ and $z$ are in $\St_\Delta(a) \cup \St_\Delta(b)$ or opposite to $a$ or $b$ in a wall crossing of $\Delta$ using $a$ or $b$ as the initial off-wall vertex. \\
			
			\item Suppose that the conditions of Part 1 are satisfied. We split the effect on the contraction point space with respect to $e = \{ a, b \}$ in $\widetilde{\Delta}$ into following adjacency properties. Note that there may be a variance in behavior within the same adjacency case depending on separation and angle/projection properties. \\
			\begin{enumerate}
				
				\item \textbf{(Initial ambient convexity conditions) \\}
				Nonemptiness requires convexity of $|\St_{\widetilde{\Delta}}(a)| \cup |\St_{\widetilde{\Delta}}(b)|$. Note that this is equal to $|\St_\Delta(a)| \cup |\St_\Delta(b)|$ if $y, z \notin \St_\Delta(a) \cup \St_\Delta(b)$. If $y \in \St_\Delta(a) \cup \St_\Delta(b)$ or $z \in \St_\Delta(a) \cup \St_\Delta(b)$, then contraction of $\{ y, z \}$ to a point in the interior of the line segment $\overline{yz}$ implies that $|\St_{\widetilde{\Delta}}(a)| \cup |\St_{\widetilde{\Delta}}(b)| \ne |\St_\Delta(a)| \cup |\St_\Delta(b)|$. For example, instances where $y = a$ or $y = b$ yield initial convexity constraints since we need $|\St_\Delta(a)| \cup |\St_\Delta(b)| \cup |\St_\Delta(z)|$ to be convex. The space of suitable $k^d$-coordinates for $u$ yielding ``convex completions'' in such cases is defined by inequalities analogous to those in Lemma \ref{contractpointspace2}.  \\

				\item \textbf{(Contraction points as off-wall vertices opposite to $a$ or $b$) \\}
				Suppose $y$ (respectively $z$) is the off-wall vertex opposite to $a$ (respectively $b$) via a single wall crossing from a facet $F \in \St_\Delta(a)$ (respectively $F \in \St_\Delta(b)$). In $\widetilde{\Delta}$, walls of $(F \setminus a) \cup y$ or $(F \setminus a) \cup z$ (respectively $(F \setminus b) \cup y$ or $(F \setminus b) \cup z$ ) that contain $y$ or $z$ are replaced by $(F \setminus a) \cup u$ (respectively $(F \setminus b) \cup u$). The half-spaces bounded by $\Span ((F \setminus a) \cup u)$ (respectively $\Span((F \setminus b) \cup u)$) in Lemma \ref{contractpointspace2} contain $a$ (respectively $b$). \\
				
				This follows from the compatibility of contraction $\{ y, z \} \in \Delta$ with convexity of wall crossings. For half-spaces bounded by specific walls, the convexity is considered with respect to the on-wall vertex in $\lk_\Delta(a)$ (respectively $\lk_\Delta(b)$) that is removed from the facets containing $y$ or $z$ above to form the wall. \\

				\item \textbf{(Contraction points as on-wall vertices in $\lk_\Delta(a)$ or $\lk_\Delta(b)$) \\}
				If $y \in \lk_\Delta(a) \cup \lk_\Delta(b)$ or $z \in \lk_\Delta(a) \cup \lk_\Delta(b)$, the change is in on-wall vertices of wall crossings involving facets of the form $G \in \St_\Delta(a)$ (respectively $G \in \St_\Delta(b)$) and $(G \setminus p) \cup x$ for some $x \ne y, z$ opposite to $a$ (respectively $b$), where $G \ni y$ (respectively $G \ni z$) and $p \in \lk_\Delta(a) \cup \lk_\Delta(b)$ is an on-wall vertex such that $p \ne y$ (respectively $p \ne z$). In $\widetilde{\Delta}$, $y$ and $z$ are replaced by the contraction point $u$ of $\{ y, z \}$ to obtain the facet $(G \setminus y) \cup u$ (respectively $(G \setminus z) \cup u$) replacing $G$ in $\widetilde{\Delta}$. \\ 
				
				The wall crossing considered here involves the facets $G$ and $(G \setminus x) \cup a$ (respectively $(G \setminus x) \cup b$). The change is in convexity with respect to vertices $r \ne u$ on the wall $G \setminus x$. Note that the changes in ``off-wall convexity properties'' (see Lemma \ref{contractpointspace2}) entirely result from changes in the $k^d$-coordinate from substituting the vertex $u$ in place of $y$ or $z$ instead of a change in the simplicial complex structure of the relevant facets. \\
				
				Fewer half-spaces from Lemma \ref{contractpointspace2} are intersected compared to $\Delta$ since those involving facets of $\Delta$ containing $\{ y, z \}$  are no longer intersected since $\widetilde{\Delta}$ does \emph{not} contain any facets corresponding to those of $\Delta$ containing the edge $\{ y, z \} \in \Delta$. \\
				
				\item \textbf{(Contracting edges containing $a$ or $b$) \\}
				If $y \in \{ a, b \}$ or $z \in \{ a, b \}$, the situation is similar to Part c) except that the half-spaces intersected to form the contraction point space of $e = \{ a, b \}$ in $\Delta$ all have counterparts in $\widetilde{\Delta}$. The difference is that there is a smaller change in $|\St_{\widetilde{\Delta}}(a)| \cup |\St_{\widetilde{\Delta}}(b)|$ compared to $|\St_\Delta(a)| \cup |\St_\Delta(b)|$ and that there are fewer half-spaces for $\widetilde{\Delta}$ from Lemma \ref{contractpointspace2} where the substitution of the contraction point $u$ of $\{ y, z \}$ in place of $y$ or $z$ induces a change in the half-space. \\

				\item \textbf{(Linear interpolation and (non)separation of vertices by convexity conditions) \\}
				Consider the hyperplanes spanned by modified walls in $\widetilde{\Delta}$ bounding the half-spaces defining convexity with respect to a fixed on-wall vertex (Proposition \ref{convsep}). Hyperplane (non)separation properties and projection/angle comparisons yield variance in behavior within the same adjacency properties from the abstract simplicial complex structure of $\Delta$. If they do \emph{not} separate $a$ and $b$, then the entire line segment $\overline{ab}$ lies in the contraction point space of $e = \{ a, b \}$ in $\widetilde{\Delta}$. Otherwise, the situation depends on the effect of replacing $a$ (respectively $b$) with $b$ (respectively $a$) and whether hyperplanes bounding half-spaces defining convexity conditions from Lemma \ref{contractpointspace2} separate $y$ and $z$ or not. The latter follows from the contraction point $u$ of $\{ y, z \}$ being a linear interpolation between $y$ and $z$. \\
			\end{enumerate}
		\end{enumerate}
	\end{prop}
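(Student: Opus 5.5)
The plan is to reuse the bookkeeping of Theorem \ref{exsubdivcon}, with the edge subdivision replaced by the contraction of $\{ y, z \}$ to $u$, $x_u = (1 - \eta) x_y + \eta x_z$. Throughout, the input is Theorem \ref{contractpointspaceexpress}. It expresses the contraction point space of $e = \{ a, b \}$ as the points of $\overline{ab}$ in $|\St(a)| \cup |\St(b)|$ that lie in the half-spaces of Lemma \ref{contractpointspace2}. By Lemma \ref{contractpointspace4}, the half-spaces of Lemma \ref{contractpointspace3} are redundant on $\overline{ab}$. So any change in the contraction point space must come either from a change in $|\St(a)| \cup |\St(b)|$ (Lemma \ref{contractpointspace1}) or from a change in one of the data $(F, a'_F, b'_F, r)$ indexing the intersection.

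For Part 1, I would argue by contraposition. Suppose neither $y$ nor $z$ lies in $\St_\Delta(a) \cup \St_\Delta(b)$, and neither is opposite to $a$ or $b$ across a wall of $\lk_\Delta(a) \cup \lk_\Delta(b)$. Then no facet $F \in \St_\Delta(a) \cup \St_\Delta(b)$ and no facet $(F \setminus a) \cup a'_F$ or $(F \setminus b) \cup b'_F$ contains $y$ or $z$. Every such facet has all but one vertex in $\lk_\Delta(a) \cup \lk_\Delta(b)$ (as in Part 1(a) of Theorem \ref{exsubdivcon}). Since all other vertices keep their coordinates, both the union of stars and every hyperplane $\Span(((F \setminus a) \cup a'_F) \setminus r)$ are unchanged.

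For Part 2, I would go through the subcases in order, tracking which of these objects move.
\begin{enumerate}
\item[(a)] When $y$ or $z$ lies in the stars, the facets of $\St(a) \cup \St(b)$ containing that vertex are re-realized with $u$ in its place. Since $u$ lies in the interior of $\overline{yz}$ and the complex changes, the union of stars changes. Convexity of this union is then the necessary condition from Lemma \ref{contractpointspace1}. When $y = a$, this amounts to asking that $u$ complete $|\St(a)| \cup |\St(b)| \cup |\St(z)|$ convexly, and I would phrase that via the same non-separation conditions as in Lemma \ref{contractpointspace2} using Proposition \ref{convsep}.
\item[(b)] I would substitute $u$ for $y$ or $z$ in the facet opposite $a$ or $b$. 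The hyperplane $\Span((F \setminus a) \cup u)$ still has $a$ on the side of $r$, because convex wall crossings are preserved in $\widetilde{\Delta}$ by hypothesis, and Part 1 of Proposition \ref{convsep} reads that as non-separation of $a$ and $r$.
\item[(c)] The facets of $\Delta$ containing $\{ y, z \}$ disappear in $\widetilde{\Delta}$, so their half-spaces drop out of the intersection. The remaining ones change only through the coordinate substitution $x_u$, because the simplicial structure of those facets is unchanged away from $y$ and $z$.
\item[(d)] This is analogous to (c), checking that every indexing facet has a counterpart because $a$ or $b$ absorbs $y$ or $z$.
\end{enumerate}

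Part (e) is the step I expect to be the main obstacle. For the hyperplane $H$ bounding a modified half-space, if $H$ does not separate $a$ and $b$, convexity of half-spaces puts all of $\overline{ab}$ on one side, and this side is the correct one because at least one of $a$ or $b$ satisfies the condition (Lemma \ref{contractpointspace2}, Proposition \ref{convsep}). If $H$ does separate them, I would write the pair of wall relations coming from replacing $a$ by $b$, and combine them with weights $\frac{1 - \eta}{\cdot}$ and $\frac{\eta}{\cdot}$ exactly as in Part 1(b) of Theorem \ref{exsubdivcon}. I would then substitute $x_u = (1 - \eta) x_y + \eta x_z$, so the sign of the relevant $r$-coefficient is a linear function of the interpolation parameter whose zero depends on the projections of $x_y$ and $x_z$ onto $H$. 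That gives the stated dependence on whether $H$ separates $y$ and $z$. The delicate point is to keep track of the signs consistently across the two interpolations.
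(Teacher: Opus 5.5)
Your plan follows the paper's own reasoning. The paper gives no separate proof of this proposition; it justifies each part inside the statement by the same bookkeeping as Theorem \ref{exsubdivcon}, tracking how the data of Lemma \ref{contractpointspace1} and Lemma \ref{contractpointspace2} change, by non-separation via Proposition \ref{convsep} for (b) and (e), and by substituting the linear interpolation for $u$ as carried out in Corollary \ref{abedgeconlin}. The one thing to tidy is that in (e) you use $\eta$ both for $u$ on $\overline{yz}$ and for the point on $\overline{ab}$ (the paper writes $\alpha$ for the former in Corollary \ref{abedgeconlin}); kept separate, the sign condition is affine in each parameter, so for fixed $u$ the constraint on $\overline{ab}$ is a single affine inequality whose threshold moves with $u$.
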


	We now give an example of the variance in behavior induced by external edge contractions within the same adjacency conditions. \\
	
	\color{black}
	
	\begin{cor} \textbf{(Linear interpolation and contracting edges containing $a$ or $b$) \\} \label{abedgeconlin}
		Suppose that the contracted edge is $\{ b, z \} \in \Delta$. Assume that contraction of this edge preserves convexity of wall crossings. Let $u$ be the contraction point of $\{ b, z \}$ and $\widetilde{\Delta}$ be the simplicial complex obtained after contraction of $\{ b, z \} \in \Delta$. Note that the same statements hold with $a$ and $b$ switched if $\{ a, z \}$ is contracted instead. \\
		
		\begin{enumerate}
			\item The convexity of wall crossings in $\widetilde{\Delta}$ with respect to $u$ is equivalent to convexity of $|\St_{\widetilde{\Delta}}(a)| \cup |\St_\Delta(b)| \cup |\St_\Delta(z)|$. If $z \in \lk_\Delta(a) \cap \lk_\Delta(b)$, then the contraction point $u$ needs to satisfy special alignment properties. The space of suitable points is parametrized by intersections of half-spaces analogous to those in Lemma \ref{contractpointspace2}. They describe ``completions'' to convex sets after removing a single vertex of the simplicial complex and replacing it with another point in the realization space. \\
			
			\item Consider facets $G \in \Delta$ such that $G \supset \{ b, z \}$ and wall crossings where $b$ is replaced by a vertex $b' \notin \St_\Delta(a) \cup \St_\Delta(b)$ to obtain a facet $(G \setminus b) \cup b'$. Suppose that a hyperplane of the form $\Span(G \setminus \{ b, r \}, b')$ with $r \in G \setminus b$ and $r \ne z$ separates vertices $a$ and $b$ for some on-wall vertex $r \in G \setminus b$. Otherwise, any point on the line segment $\overline{ab}$ would preserve convexity of the induced wall crossing with respect to $r$ in $\widetilde{\Delta}$. In other words, we are assuming that switching $b$ with $a$ does \emph{not} preserve the convexity of this wall crossing with respect to $r$ in $\Delta$. Assume that the same is true after $b$ is replaced by the contraction point $u$ of $\{ b, z \}$. \\
			
			Whether contraction of $\{ b, z \}$ makes $a$ and $b$ pushes this ``convexity with respect to $r$ hyperplane'' closer to $a$ or $b$ is determined by the sign of the $x_z$-coefficient of the intersection point with $\overline{ab}$ as an element of $\Span(G \setminus \{ b, r \}, b')$. Nonnegativity means it would be pushed closer to $b$ and negativity means it is pushed closer to $a$. The latter case must satisfy a ``lower bound of ratios'' in order to be compatible with separation of $a$ and $b$ by this hyperplane. The parameters involved have to do with projection/angle data with respect to hyperplanes that bound half-spaces defining convexity properties. \\
		\end{enumerate}
	\end{cor}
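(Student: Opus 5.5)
We specialize the framework of Proposition \ref{genextcon} to the case $y = b$ (Part 2(d)) and work throughout with the hyperplane (non)separation description of convexity from Proposition \ref{convsep} and the wall relations of Proposition \ref{ratwallconv}. The two parts are handled separately: Part 1 is an ambient convexity statement about the star of the new vertex $u$, and Part 2 is a sign computation on the point where a single convexity hyperplane meets $\overline{ab}$.

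\textbf{Part 1.} We will first use $\lk_{\widetilde{\Delta}}(u) = \lk_\Delta(b) \cup \lk_\Delta(z)$, which holds by flagness (Proposition 5.3 of \cite{LR}) and the description of contractions in the remark on duality properties.
\begin{enumerate}
\item This gives $|\St_{\widetilde{\Delta}}(u)| = |\St_\Delta(b)| \cup |\St_\Delta(z)|$.
\item Part 3 of Proposition \ref{convsep} then says that convexity of the wall crossings with respect to $u$ is convexity of this union. We place it alongside $|\St_{\widetilde{\Delta}}(a)|$, which borders it along the modified star of $a$, to obtain the stated union.
\item For $z \in \lk_\Delta(a) \cap \lk_\Delta(b)$ we reuse the argument of Theorem \ref{exsubdivcon} Part 2(b). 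Take the facet $F \supset \{a,b,z\}$ and the vertex $s$ opposite $b$. The segment $\overline{zs}$ is not covered unless $u$ lies on suitable spans. This yields the ``special alignment'', namely equalities and inequalities defining where $u$ may sit.
\item These conditions are written as half-space intersections by repeating the construction of Lemma \ref{contractpointspace2}, with the single removed vertex $b$ (or $z$) replaced by $u$. This is the ``completion'' description.
\end{enumerate}

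\textbf{Part 2.} Fix $G \supset \{b,z\}$, $b'$ and $r$ as in the statement, and let $H = \Span(G \setminus \{b,r\}, b')$. The main obstacle is making the sign statement precise. Changing coordinates from $b$ to $u$ alters the spanning set, so the new hyperplane is $H' = \Span((G \setminus \{b,r\}) \cup b')$, where $z$ now enters only through $x_u$. We must track how the intersection point with $\overline{ab}$ moves.
\begin{enumerate}
\item In $\widetilde{\Delta}$ the relevant wall has $u$ in place of $b$ only if $b$ lies in that wall. Since $r \neq z$ and $z \in G \setminus \{b,r\}$, the vertex $z$ itself is replaced by $u$ in the span.
\item Write $x_u = (1-\eta)x_b + \eta x_z$ and solve for the intersection point $p = (1-t)x_a + t x_b$ of $H'$ with $\overline{ab}$. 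Expand $p$ in the basis $(G \setminus \{b,r,z\}) \cup \{u, b'\}$ and compare with the analogous expansion in the basis containing $z$.
\item After substituting for $x_u$, the $x_z$-coefficient of this expansion compares $t$ with the original parameter $t_0$ for $H$. A nonnegative coefficient gives $t \geq t_0$, so the hyperplane is pushed toward $b$. A negative coefficient pushes it toward $a$.
\item In the negative case, the standing assumption that $H'$ still separates $a$ and $b$ forces $t \in (0,1)$. This becomes a lower bound on a ratio of coefficients, which can be read as the projections of $x_z$ and $x_b$ onto the normal of $H$, i.e. an angle comparison.
\end{enumerate}

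The step most likely to need care is this final identification of coefficient ratios with projection data. We would attempt it by pairing with the dual vector $r^*$, as in the proof of Lemma \ref{contractpointspace3}.
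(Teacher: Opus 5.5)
\textbf{The gap is in Part 1.} Your steps 1--2 show only that convexity of the wall crossings with respect to $u$ amounts to convexity of $|\St_{\widetilde{\Delta}}(u)| = |\St_\Delta(b)| \cup |\St_\Delta(z)|$. ``Placing it alongside'' $|\St_{\widetilde{\Delta}}(a)|$ is not a valid step, because the union of two adjacent convex stars need not be convex. Example \ref{contrcrosspolytop} shows this: all wall crossings there are flat, so every star is convex, yet $|\St_\Delta(1)| \cup |\St_\Delta(2)|$ is not convex.

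The triple union comes from somewhere else, and this is the idea you are missing. It equals $|\St_{\widetilde{\Delta}}(a)| \cup |\St_{\widetilde{\Delta}}(u)|$, the ambient set for the contraction point space of the image $\{a, u\}$ of $e$ in $\widetilde{\Delta}$. Its convexity is the Lemma \ref{contractpointspace1} prerequisite for that space to be nonempty, and that is how the paper's proof obtains it.

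Your step 3 also does not carry over from Theorem \ref{exsubdivcon}. There $z$ survives the subdivision as a vertex of $\lk_{\Delta'}(b) \setminus \lk_{\Delta'}(a)$. Here $z$ is merged into $u$, and $(F \setminus b) \cup s$ becomes $(F \setminus \{b, z\}) \cup \{s, u\} \in \St_{\widetilde{\Delta}}(a)$. So $\{u, s\}$ is an edge and no segment $\overline{zs}$ is left uncovered. The alignment constraints should instead come, as in the paper, from segments joining vertices of $\lk_{\widetilde{\Delta}}(a) \setminus \lk_{\widetilde{\Delta}}(u)$ to vertices of $\lk_{\widetilde{\Delta}}(u) \setminus \lk_{\widetilde{\Delta}}(a)$, which must stay inside the union (compare Proposition \ref{starunionadj}).

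\textbf{Part 2 is essentially the paper's computation run backwards.} Write $x_u = (1 - \alpha) x_b + \alpha x_z$. The paper substitutes $x_z = \frac{1}{\alpha} x_u - \frac{1 - \alpha}{\alpha} x_b$ into the old relation $\sum_{m \in (G \setminus \{b, r, z\}) \cup b'} c_m x_m + c_z x_z = (1 - \eta) x_a + \eta x_b$. This gives directly the coefficient $c_z$ named in the statement and the bound $c_z > -\frac{\eta \alpha}{1 - \alpha}$.

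Your back-substitution of $x_u$ also works, but you need one more line tying your $x_z$-coefficient to $c_z$. Subtract the $x_b$-term created by the substitution. Your point then lies in both $\Span(G \setminus \{b, r\}, b')$ and $\Span(x_a, x_b)$, so it is the positive multiple $(1 - t)/(1 - t_0)$ of the old intersection point. Hence the two $x_z$-coefficients have the same sign.

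Also, $\Span((G \setminus \{b, r\}) \cup b')$, as you wrote $H'$, is just $H$. You mean $\Span((G \setminus \{b, r, z\}) \cup \{u, b'\})$.
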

	
	\begin{proof}

		\begin{enumerate}
			\item As a start, the nonemptiness of the contraction point space with respect to $\{ a, u \} \in \widetilde{\Delta}$ ($u$ replacing $b$) in the contraction $\widetilde{\Delta}$ of the edge $\{ b, z \}$ requires convexity of $|\St_{\widetilde{\Delta}}(a)| \cup |\St_{\widetilde{\Delta}}(u)| = |\St_{\widetilde{\Delta}}(a)| \cup |\St_\Delta(b)| \cup |\St_\Delta(z)|$. If $z \in \lk_\Delta(a) \cap \lk_\Delta(b)$, the contraction point $u$ of $\{ b, z \} \in \Delta$ may need to satisfy special conditions since lines between a vertex of $\lk_\Delta(a) \setminus \lk_\Delta(b)$ and a vertex of $\lk_\Delta(b) \setminus \lk_\Delta(a)$ must still stay inside $|\St_{\widetilde{\Delta}}(a)| \cup |\St_{\widetilde{\Delta}}(b)|$. The connection to Lemma \ref{contractpointspace2} comes from studying when replacement of the single vertex $z$ by $u$ in $\St_\Delta(a) \cup \St_\Delta(b)$ means $|\St_{\widetilde{\Delta}}(a)| \cup |\St_{\widetilde{\Delta}}(u)|$ is convex. These cases indicate whether the ``wall crossing'' involving $(G \setminus b) \cup a$ and $(G \setminus b) \cup b'$ instead of $G$ and $(G \setminus b) \cup b'$ is closer or further away from being convex with respect to $r$. \\ 
			
			\item Our initial assumption that contraction of $\{ b, z \}$ preserves convexity of wall crossings implies that the half-space associated to this wall from Lemma \ref{contractpointspace2} contains $b$. Note this defines convexity of the wall crossing with respect to $r$. Such a separation means that there is a parameter $\eta \in (0, 1)$ such that there is a linear relation of the form \[ \sum_{m \in (G \setminus \{ b, r \}) \cup b'} c_m x_m = \sum_{m \in (G \setminus \{ b, r, z \}) \cup b'} c_m x_m + c_z x_z  =  (1 - \eta) x_a + \eta x_b. \] Note that $z \in G \setminus \{ b, r \}$ since $G \supset \{ b, z \}$ and $r \ne z$. \\
			
			After contracting the edge $\{ b, z \} \in \Delta$ to a point $u$ that is a linear interpolation with \[ x_u = (1 - \alpha) x_b + \alpha x_z \] for some $\alpha \in (0, 1)$, substituting \[ x_z = \frac{1}{\alpha} x_u - \frac{1 - \alpha}{\alpha} x_b \] into the identity above implies that

			\begin{align*}
				\sum_{m \in (G \setminus \{ b, r, z \}) \cup b'} c_m x_m + c_z x_z  &=  (1 - \eta) x_a + \eta x_b \\
				\Longrightarrow \sum_{m \in (G \setminus \{ b, r, z \}) \cup b'} c_m x_m + c_z \left( \frac{1}{\alpha} x_u - \frac{1 - \alpha}{\alpha} x_b \right)  &=  (1 - \eta) x_a + \eta x_b \\
				\Longrightarrow \sum_{m \in (G \setminus \{ b, r, z \}) \cup b'} c_m x_m + \frac{c_z}{\alpha} x_u - \frac{c_z(1 - \alpha)}{\alpha} x_b &= (1 - \eta) x_a + \eta x_b \\
				\Longrightarrow \sum_{m \in (G \setminus \{ b, r, z \}) \cup b'} c_m x_m + \frac{c_z}{\alpha} x_u &= (1 - \eta) x_a + \left( \eta + \frac{c_z(1 - \alpha)}{\alpha} \right) x_b.
			\end{align*}

			Note that the linear relation above among the vertices in $(G \setminus \{ b, z \}) \cup \{ u, a \}$ is unique up to scaling since the $k^d$-coordinates vertices of the realization of a facet of a $(d - 1)$-dimensional simplicial complex are linearly independent. \\
			
			Since $1 - \eta > 0$, separation of the vertices $a$ and $b$ (i.e. $a$ failing to induce convexity with respect to $r$) by the span of the modified wall after contraction of $\{ b, z \}$ means that we need \[ \eta + \frac{c_z(1 - \alpha)}{\alpha} > 0 \Longleftrightarrow c_z > -\frac{\eta \alpha}{1 - \alpha}. \] 
			
			If $c_z \ge 0$, then point of intersection with the interior of the line segment $\overline{ab}$ is closer to $b$ (i.e. $a$ and $b$ ``more separated''). Otherwise, $c_z < 0$ and the point of intersection with the interior of the line segment $\overline{ab}$ is closer to $a$ (i.e. $a$ and $b$ ``less separated''). \\
			
			Finally, the last statement is an application of Proposition \ref{ratwallconv} and Proposition \ref{convsep}.

		\end{enumerate}

	\end{proof}

	\subsection{Boundary behavior} \label{bdrybeh}
	
	In this subsection, we focus on boundary conditions for (local) convexity. In particular, we consider flat wall crossings (Part 3 of Proposition \ref{ratwallconv}) where the hyperplane in the convexity condition from Proposition \ref{convsep} contains both off-wall vertices (Proposition \ref{convsep}). These are wall crossings that ``just manage'' to be convex. The combinatorial objects we compare flat wall crossings with are induced 4-cycles in the 1-skeleton of the given simplicial complex. We discuss the combinatorial and geometric motivation leading to the 4-cycles below. \\
	
	Given close connections between convex wall crossings and combinatorial positivity properties coming from flag simplicial complexes (\cite{LR}, \cite{Psig}, \cite{Athgam}), it is natural to look at ``boundary cases'' of flag simplicial pseudomanifolds. Apart from this, we note that the condition $\lk_\Delta(a) \cap \lk_\Delta(b) = \lk_\Delta(e)$ for edges $e = \{ a, b \} \in \Delta$ (often used when $\Delta$ is flag) is equivalent to the contraction of $e$ preserving the PL homeomorphism type \cite{Nev}. Regarding changes under PL homeomorphisms, we start by noting that edge subdivisions preserve the flag property and increase the space of convex wall crossings while preserving convexity of the old ones (Proposition \ref{edgesubdivwallconv}). This suggests a comparison with simplicial pseudomanifolds minimal with respect to edge subdivisions (i.e. edge contractions failing to preserve flagness). These minimal cases are those where the 1-skeleton of the given simplicial complex is covered by induced 4-cycles (p. 79 of \cite{LN}). In other words, the other simplicial pseudomanifolds in the same PL homeomorphism class are edge subdivisions of these ``minimal building blocks''. Since edge subdivisions increase space of convex wall crossings and preserve old ones (Proposition \ref{edgesubdivwallconv}) and edge subdivisions preserve flagness, these are natural points of comparison. \\
	
	In this context, we compare effects of PL homeomorphisms on induced 4-cycles in the 1-skeleton and flat wall crossings. The former may be taken as a proxy for ``intrinsic'' structures from abstract simplicial complex structures where we find interesting parallels. The coordinate realizations play a role in degenerate linear conditions such as those on maintaining flatness of wall crossings after a coordinate change in a single vertex. \\

	\begin{prop} \textbf{(Changes in induced 4-cycles vs. flatness of wall crossings) \\} \label{bdry4cycleflat}
		
		Suppose that $\Delta$ is a simplicial sphere lying in $k^d$ that is locally convex with convex wall crossings (in the sense of p. 420 -- 421 of \cite{Mat}). We will take the usual realization of edge subdivisions/contractions as linear interpolations that keep all $k^d$-realizations of all unaltered vertices the same as those in the initial simplicial complex. While we focus on newly formed induced 4-cycles and flat wall crossings, we will make some comments on existing ones before the transformations are made. \\
		
		\begin{enumerate}
			\item \textbf{(Suspensions) \\}
			Suspensions form new 4-cycles exactly when they form new wall relations that are flat with respect to on-wall vertices. They preserve existing 4-cycles and flat wall crossings. \\
			
			\item \textbf{(Edge subdivisions)}
				\begin{enumerate}
					\item The new 4-cycles from edge subdivisions are from suspensions of non-adjacent vertices in $\lk_\Delta(e)$ by $a$ and $b$. Existing 4-cycles are preserved unless they use $e$ as an edge. 
					
					\item The new flat wall crossings are those that use the subdividing vertex $v$ as an on-wall vertex and are flat with respect to the remaining on-wall vertices (which are in $\lk_\Delta(e)$). Both settings have objects removed from $\{ a, b \}$ no longer being an edge in the edge subdivision and need the entire edge $e = \{ a, b \}$ involved in order to produce a change.
					
					\item For counterparts of wall crossings in $\Delta$, the only possible changes come from wall crossings in $\Delta'$ involving $v$. In such a wall crossing, flatness is only preserved with respect to the vertices excluding the vertex of $a$ or $b$ that is \emph{not} replaced by $v$. \\
				\end{enumerate}

			\item \textbf{(Edge contractions) \\}
				\begin{enumerate}
					\item The new 4-cycles from edge contractions are edge contractions of older 5-cycles (which themselves are only produced by edge subdivisions/contractions and not suspensions). Existing 4-cycles are preserved unless they use the edge $e$. \\
					
					\item Consider the new flat wall crossings produced from edge contractions. More specifically, we look at the flatness property that only depends on changes in the simplicial complex structure and convexity properties of the previous step. For such wall crossings, this is flatness with respect to the contraction point $w$ of $\{a, b\}$. \\
					
					Such new flat wall crossings are produced from pairs of consecutive wall crossings $F \cup G$ and $G \cup H$ in $\Delta$ where the first wall crossing is strictly convex with respect to the on-wall vertex $a$ (respectively $b$) and the second wall crossing is strictly convex with respect to the on-wall vertex $b$ (respectively $a$). The vertex $a$ (respectively $b$) is off-wall in the second wall crossing. \\
					
					\item The use of strictly convex wall crossings to produce a flat wall crossing is analogous to edge subdivisions of 4-cycles being needed to produce 4-cycles via an edge contraction. Note that edge subdivisions make coefficients of vertices in unscaled wall relations with the usual signs more negative (Proposition \ref{edgesubdivwallconv}). \\

					\item As for counterparts of existing flat wall crossings in $\Delta$, they stay the same if they do not involve $a$ or $b$. However, wall crossings involving exactly one of $a$ or $b$ producing or preserving flatness with respect to a vertex $p$ may require certain ``degenerate'' conditions. \\
					
					For counterparts of wall crossings in $\Delta$ involving $a$ or $b$, flatness with respect to a specific on-wall vertex $p$ means containment in a specific hyperplane (see Proposition \ref{convsep} and Proposition \ref{ratwallconv}). There is at most one such point on the line segment $\overline{ab}$ unless the line spanned by $a$ and $b$ is contained in this hyperplane. The condition is more restrictive when we consider flatness of multiple such wall crossings. Note that this does \emph{not} involve a change in the simplicial complex structure and only the $k^d$-coordinate choice for $a$ or $b$. \\
					
					\color{black}
					
				\end{enumerate}

		\end{enumerate}
	\end{prop}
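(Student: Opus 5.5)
The plan is to treat the three operations separately. In each case I would compare the combinatorial change, i.e.\ which induced $4$-cycles in the $1$-skeleton appear or disappear, with the sign change of on-wall coefficients in wall relations. The geometric input is Proposition \ref{ratwallconv} and Proposition \ref{convsep}: a wall crossing is flat with respect to an on-wall vertex $p$ exactly when the coefficient $c_p$ of its wall relation vanishes, equivalently when the second off-wall vertex lies in $\Span(\tau \setminus p, r)$. The substitution computations of Proposition \ref{edgesubdivwallconv} supply the coefficient bookkeeping.

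\textbf{Suspensions.} For $\Susp_{s,t}(\Delta)$, the new facets are $F \cup s$ and $F \cup t$, realized with $x_s, x_t$ on a line transverse to the span of $\Delta$ (say $x_t = -x_s$, in one dimension higher). The new wall crossings are of two kinds: $s \leftrightarrow t$ across $\tau \cup \{\cdot\}$ for a facet $F$ of $\Delta$, with wall relation $x_s + x_t = 0$; and old crossings coned off by $s$ or $t$, whose relations are unchanged. In the first kind every on-wall vertex of $F$ has coefficient $0$, so these crossings are flat with respect to all of $F$. The new induced $4$-cycles are exactly $s$--$p$--$t$--$q$ with $\{p,q\}$ a non-edge of $\Delta$, together with those coming from new crossings. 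I would match these by checking that each new flat relation occurs precisely at crossings joining $s$ and $t$, and that coning preserves both old flat relations and old induced $4$-cycles.

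\textbf{Edge subdivisions.} A new induced $4$-cycle through $v$ must be $a$--$p$--$b$--$q$ with $p,q \in \lk_\Delta(e)$ non-adjacent, using $\lk_{\Delta'}(v) = \Susp_{a,b}(\lk_\Delta(e))$. Old $4$-cycles survive unless they use the edge $e$, which no longer exists. On the flatness side, the computation in Proposition \ref{edgesubdivwallconv} gives two things:
\begin{itemize}
\item The new crossing $a \leftrightarrow b$ has relation $(1-\eta)x_a + \eta x_b - x_v = 0$, which is flat with respect to every vertex of $\lk_\Delta(e)$ on the wall.
\item Counterparts of old crossings with $v$ replacing $b$ have the $a$-coefficient shifted to $c_a - \tfrac{1-\eta}{\eta} c_b$ or similar, while all other coefficients are only rescaled.
\end{itemize}
So flatness is preserved exactly for the vertices other than the untouched one of $a,b$, which gives (c).

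\textbf{Edge contractions.} Combinatorially, a new induced $4$-cycle in $\widetilde{\Delta}$ passing through $w$ lifts to a cycle $a$--$b$--$q$--$\cdot$--$p$ of length $5$ in $\Delta$. Arguing as in the suspension and subdivision cases, suspensions cannot create induced $5$-cycles of this form. For flatness, I would take consecutive crossings $F \cup G$ and $G \cup H$ whose walls contain $a$ and $b$ respectively, substitute $x_w$, and add the two relations so that the $b$-term (resp.\ $a$-term) cancels. The $w$-coefficient of the combined relation is then a combination of $c_a$ and $c_b$, and requiring it to vanish forces both to be strictly negative with opposite-sided contributions. This is the analogue of ``subdivisions must precede the contraction.''

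Part (d) is linear algebra. Flatness of a counterpart crossing with respect to $p$ requires $x_w$ to lie in a fixed hyperplane, which meets $\overline{ab}$ in at most one point unless it contains the whole line.

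The main obstacle is 3(b): making the ``pairs of consecutive wall crossings'' characterization exact rather than merely sufficient. I would attempt it by writing the wall relation in $\widetilde{\Delta}$ as a positive combination of the two $\Delta$-relations and tracking signs.
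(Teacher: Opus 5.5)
Your proposal is essentially the paper's own proof: the same three-way split with the same bookkeeping ($x_s + x_t = 0$ and suspended non-edges, $\lk_{\Delta'}(v) = \Susp_{a,b}(\lk_\Delta(e))$ with the substitutions from Proposition \ref{edgesubdivwallconv}, induced $5$-cycles for contractions, and a hyperplane meeting $\overline{ab}$ in at most one point). In 3(b), write $\alpha_r$ and $\beta_r$ for the coefficients of the wall relations of $F \cup G$ and $G \cup H$; then your positive combination of the two relations through $x_w$ gives a $w$-coefficient proportional to $\alpha_b\beta_a - \alpha_a\beta_b$, independent of $\eta$, which is exactly the paper's condition $\alpha_a/\alpha_b = \beta_a/\beta_b$, obtained there by eliminating $x_a$ directly in $\Delta$ (flatness with respect to $w$ being $q \in \Span(F \setminus a)$), and the ``exactness'' you single out as the main obstacle is combinatorial rather than a matter of signs and follows from flagness: if $(F \setminus a) \cup w$ and $(H \setminus b) \cup w$ share a wall $M \cup w$ in $\widetilde{\Delta}$, then $M \cup \{a, b\}$ is a clique, hence a facet $G$ with walls $M \cup a$ and $M \cup b$, so every new wall crossing through $w$ comes from a consecutive pair $F \cup G$, $G \cup H$.
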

	
	\begin{proof}
		We will compare the effect of suspensions and edge subdivisions/contractions on induced 4-cycles and flatness of wall crossings. Note that an induced 4-cycle can be interpreted as the suspension of a pair of non-adjacent vertices. The starting simplicial complex for each transformation will be denoted $\Delta$. \\
		
		\begin{enumerate}
			\item A suspension by a pair of vertices $d$ and $-d$ yields new induced 4-cycles coming from non-adjacent pairs of vertices in the previous step. Note that each suspension vertex is adjacent to every vertex of the simplicial complex we take the suspension over. \\
			
			From the perspective of wall relations, the relation $x_d + x_{-d} = 0$ ($x_p$ being the $k^d$-coordinate of a vertex $p$) is a new wall relation where the remaining vertices on the wall have coordinate 0. In this sense, a wall crossing in the suspension formed by replacing $d$ by $-d$ is flat with respect to \emph{any} on-wall vertex. From the non-separating hyperplane perspective, any hyperplane that contains $x_d$ also contains $x_{-d}$ since it is a vector space. This means that $x_d$ and $x_{-d}$ are \emph{not} separated by hyperplanes given by the span of $d$ and $\tau \setminus a$ for the vertex we measure convexity with respect to and we have a flat wall crossing. For existing wall crossings where $x_d$ or $x_{-d}$ is on-wall, the new induced wall crossings are flat with respect to the suspension vertex used ($d$ or $-d$) and the flatness/convexity properties of the remaining on-wall vertices are unaffected. \\

			\item Next, we consider the effect of (stellar) subdivision of an edge $e = \{ a, b \} \in \Delta$. The induced 4-cycles in $\Delta$ \emph{not} involving $e = \{ a, b \}$ are preserved. On the other hand, the induced $4$-cycles in $\Delta$ that \emph{do} use $e = \{ a, b \}$ are removed since $a$ and $b$ are no longer adjacent (form $5$-cycles instead). Since the vertices $a$ and $b$ are no longer adjacent in the edge subdivision $\Delta'$, we obtain new induced 4-cycles from the suspension of non-adjacent pairs of vertices in $\lk_\Delta(e)$ by the vertices $a$ and $b$. \\
			
			For flatness/convexity of wall crossings, we note that the wall relation $x_a + x_b - 2x_v = 0$ accounts for wall crossings in $\Delta'$ using the subdividing vertex $v$ as an on-wall vertex. This indicates that such wall crossings are flat with respect to the rest of the on-wall vertices. By Proposition \ref{edgesubdivwallconv}, wall crossings in $\Delta'$ using $v$ as an off-wall vertex maintain their convexity and do \emph{not} introduce any new flat wall crossings. Note that wall crossings in $\Delta$ where the union of the facets does \emph{not} contain $e = \{ a, b \}$ remain the same in $\Delta'$. The ones that do are removed and replaced with the wall crossings involving $v$ which we discussed earlier. Thus, both the new induced 4-cycles and the new flat wall crossings are induced by suspension over the vertices $a$ and $b$. \\

			As for counterparts of existing wall crossings of $\Delta$, the only possible changes come from wall crossings of $\Delta'$ involving $v$. To look for changes in flat wall crossings, we look at cases where $v$ is an on-wall or off-wall vertex of a wall crossing in the proof of Proposition \ref{edgesubdivwallconv}. Note that $v$ can only replace one of $a$ or $b$ in either facet of a given wall crossing of $\Delta$. This is something we can show directly when $v$ is an on-wall vertex. If $v$ is an off-wall vertex, the vertex replacing it is \emph{not} equal to $a$ or $b$ and we get a facet of $\Delta$ that does \emph{not} contain $e = \{ a, b \}$. While it is shown that convexity of wall crossings is preserved, we want to look for possible instances where there is a strict decrease in the coefficients involved that forces strict negativity of the wall crossing coefficients. In both the on-wall and off-wall cases of $v$, the only vertices where this can occur are the vertex in $e = \{ a, b \}$ that is \emph{not} replaced by $v$ in the given wall crossing. \\

			\item Finally, we study contractions $\widetilde{\Delta}$ of an edge $e = \{ a, b \} \in \Delta$. As in the edge subdivision case, the induced $4$-cycles \emph{not} using $e = \{ a, b \}$ are preserved and those that \emph{do} use $e = \{ a, b \}$ are removed (turned into 3-cycles in this case). Any new $4$-cycles formed in $\widetilde{\Delta}$ are from $5$-cycles of $\Delta$. One possible source of an induced $5$-cycle is from subdivision of an edge $\{ p, q \}$ of an induced $4$-cycle. This is because the link over the subdividing vertex $v$ is the suspension over $p$ and $q$ of the link over $\{ p, q \}$ and the other two vertices of an induced 4-cycle containing $\{ p, q \}$ do \emph{not} lie in the link over $\{ p, q \}$. Also, suspensions \emph{cannot} be a source of new induced 5-cycles since a suspension vertex is adjacent to every vertex except the other suspension point. However, any vertex of an induced 5-cycle is \emph{not} adjacent to 2 of the other other vertices in the induced 5-cycle. Thus, the only source of induced 5-cycles is from edge subdivisions of induced $4$-cycles or edge contractions of induced $6$-cycles. These cases both fall into edge subdivisions/contractions of edges in induced 4-cycles from an earlier stage. \\
			
			We will compare this to the effect on flatness of wall crossings. The focus will be on changes in simplicial complex structures since we are making comparisons with structural changes from induced 4-cycles in the 1-skeleton. As in the edge subdivision case, wall crossings that do \emph{not} involve $a$ or $b$ are unaffected. Wall crossings in $\Delta$ that contain $e = \{ a, b \}$ no longer exist in $\widetilde{\Delta}$. The possible changes in convexity come wall crossings with facets in $|\St_\Delta(a)| \cup |\St_\Delta(b)|$ or exactly 1 wall crossing away from it. They are given by the following cases: \\
			
			\begin{itemize}
				\item \textbf{Replacement of $a$ or $b$ by a ``contraction point'' $w$ interpolating $a$ and $b$ in the contraction vs. wall crossings in $\Delta$ using exactly one of $a$ or $b$. \\ }
				
				This does not involve a change in the simplicial complex structure and instead depends on the choice of contraction point $w$ (see hyperplanes bounding half-spaces intersected in Theorem \ref{contractpointspaceexpress}). The reason is that opposite off-wall vertices of a wall crossing in a flag simplicial complex are \emph{not} adjacent. Note that we want $|\St_\Delta(a)| \cup |\St_\Delta(b)|$ to be convex in order for convexity of wall crossings with respect to $w$ to be preserved. The wall relations involved in Proposition \ref{convsep} and Proposition \ref{ratwallconv} indicate that convexity of a wall crossing with respect to a particular on-wall vertex is defined by lying in some hyperplane. If $w$ is a linear interpolation point in the interior of $\overline{ab}$, it does \emph{not} lie on this hyperplane unless the entire line segment $\overline{ab}$ is contained in it. If $\overline{ab}$ is \emph{not} contained in this hyperplane, then the vertex $a$ or $b$ involved in the wall crossing from $\Delta$ uses up the single intersection point of the line containing $\overline{ab}$ and the hyperplane. This yields the statements about wall crossings inherited from $\Delta$ involving one of $a$ or $b$ since faces containing $e = \{ a, b \}$ no longer exist after contracting $e$. \\

				\item \textbf{Newly formed wall crossings among pairs of facets of $\Delta$ that did not share a wall that now share a wall in $\widetilde{\Delta}$. \\}

				This comes from pairs of consecutive wall crossings where the first wall crossing uses a facet $F$ only containing $a$ and not $b$ (respectively $b$ and not $a$) has the initial off-wall vertex $p$ replaced by $b$ to form a facet $G$ (respectively $a$) and the second wall crossing uses $a$ (respectively $b$) as the initial off-wall vertex that is replaced by a vertex $q$ to form a third facet $H$. The flatness of the resulting wall crossing using $\widetilde{F} \coloneq (F \setminus a) \cup w \in \widetilde{\Delta}$ and $\widetilde{H} \coloneq (H \setminus b) \cup w = (G \setminus \{ a, b \} ) \cup \{ q, w \}$ with respect to a vertex $m \in (F \cap G) \setminus a = (\widetilde{F} \cap \widetilde{H}) \setminus w$ is equivalent to $q \in \Span((\widetilde{F} \cap \widetilde{H}) \setminus m, p)$. If $m = w$, this is a property inherited from $\Delta$ before the edge contraction. Otherwise, having $m \ne w$ would mean that $w$ is included in the wall of $\widetilde{\Delta}$ and the choice does not only depend on the simplicial complex structure. The parameter would be the $k^d$-coordinate used for the contraction point and the intersections of hyperplanes bounding half-spaces from Theorem \ref{contractpointspaceexpress} would be a source of points. The analysis from wall crossings involving exactly one of $a$ or $b$ above indicates that flatness is difficult to maintain without additional conditions. So, we will focus on the $m = w$ case. \\
				
				In particular, we claim that the wall crossing for $F \cup G$ is \emph{not} flat with respect to $a$ (i.e. $b \notin \Span(F \setminus a)$). Assume that $q \in \Span((\widetilde{F} \cap \widetilde{H}) \setminus w, p) = \Span((F \cap G) \setminus a, p) = \Span(F \setminus a)$. Since the assumption on $q$ implies that $\Span(H \setminus b) = \Span(F \setminus a)$, the linear independence of the vertices of $H$ implies that $b \notin \Span(H \setminus b) = \Span(F \setminus a)$ and the wall crossing $F \cup G$ is \emph{not} flat. This is because $H \setminus b = (F \setminus \{ p, a \} ) \cup q$ and $q \in \Span(F \setminus a)$ would imply that $\Span(H \setminus b) \subset \Span(F \setminus a)$. The latter condition means that we have equality by dimension reasons.  The same reasoning using the hyperplane $\Span(H \setminus b)$ implies that the wall crossing for $G \cup H$ is \emph{not} flat with respect to $b$ (i.e. $a \notin \Span(H \setminus b)$). We can show this more directly using explicit wall relation coefficient signs. Using the wall relation for $G \cup H$ to replace $x_a$ by $x_q$ in the wall relation for $F \cup G$, we need the coefficient of $x_b$ in the resulting linear relation to be equal to $0$ while the coefficient of $x_q$ is strictly positive. Denoting the coefficient of $x_r$ in the wall relation for $F \cup G$ by $\alpha_r$ and the coefficient of $x_s$ in the wall relation for $G \cup H$ by $\beta_s$, this is equivalent to having \[ \alpha_b + \alpha_a \cdot -\frac{\beta_b}{\beta_a} = 0 \] and $\alpha_a \cdot -\frac{\beta_q}{\beta_a} > 0$. The latter condition follows from existing convexity assumptions since $\alpha_a < 0$ ($\alpha_a \le 0$ and $\alpha_a \ne 0$ from $\alpha_b > 0$) and $\beta_q, \beta_a > 0$. Thus, the quantitative condition for flatness of the resulting wall crossing in $\widetilde{\Delta}$ with respect to the contraction point $w$ is \[ \alpha_b + \alpha_a \cdot -\frac{\beta_b}{\beta_a} = 0 \Longleftrightarrow \alpha_b = \alpha_a \cdot \frac{\beta_b}{\beta_a} \Longleftrightarrow \frac{\alpha_a}{\alpha_b} = \frac{\beta_a}{\beta_b}  \] and the pair of wall crossings involved ($F \cup G$ and $G \cup H$) are \emph{not} flat with respect to $a$ and $b$. Note that $\alpha_b > 0, \alpha_a < 0, \beta_b < 0$, and $\beta_a > 0$. The inequalities $\alpha_b > 0$ and $\beta_a > 0$ follow from initial assumptions while $\alpha_a < 0$ and $\beta_b < 0$ follow from combining $\alpha_a \le 0$ and $\beta_b \le 0$ with the relation $\alpha_b + \alpha_a \cdot - \frac{\beta_b}{\beta_a} = 0$. \\  
				
				The wall crossings $F \cup G$ (respectively $G \cup H$) \emph{not} being flat with respect to $a$ (respectively $b$) is analogous to the new induced 4-cycles in $\widetilde{\Delta}$ coming from induced 5-cycles in $\Delta$, which do \emph{not} contain any induced 4-cycles. \\
				
				Recall from above that induced 5-cycles \emph{cannot} come from suspensions and must arise from edge subdivisions of induced 4-cycles or edge contractions of induced 6-cycles. Since suspensions only yield wall crossings which are flat with respect to all the on-wall vertices, there must be some kind of edge subdivision/contraction involved to construct the types of new wall crossings in $\widetilde{\Delta}$ that are \emph{not} counterparts of those in $\Delta$. In counterparts of existing wall relations, we note that edge subdivisions cannot produce new vertices inducing flat wall crossings (Proposition \ref{edgesubdivwallconv}). Since $\alpha_a < 0$ and $\beta_b < 0$ in the wall crossings $F \cup G$ and $G \cup H$, obtaining such wall crossings in $\Delta$ that lead to new flat wall crossings from $\widetilde{\Delta}$ are not induced by suspensions and must involve edge subdivisions/contractions in some way. 
			\end{itemize}

		\end{enumerate}

	\end{proof}

	\color{black}

	\section{Linear systems of parameters storing analogues of local convexity changes and replacing rational equivalence} \label{lsopcoordrat}

	For fans associated to toric varieties, the connection with local convexity starts with inner products in the rational equivalence relations. The wall relations determining local convexity data follow from intersecting these relations with torus-invariant curves (p. 300 of \cite{CLS}). For simplicial pseudomanifolds, we have replaced them PL homeomorphism-induced changes based on an initial ``base case'' (e.g. boundaries of cross polytopes for simplicial spheres). The end of Section \ref{presconpl} also suggests a heuristic of studying edge subdivisions of simplicial complexes with 1-skeleta covered by 4-cycles or flat wall crossings (Proposition \ref{bdry4cycleflat}). This has involved making choices about relationships between coordinate realizations of vertices of the simplicial complexes involved. In Section \ref{presconpl}, we assumed that the edge subdivisions and contractions composing the PL homeomorphism are realized by linear interpolations. However, the reasoning involved in a large part of the main results involved do not make use of this assumption (e.g. unexpected behavior discussed in Remark \ref{genreasonlsop}). \\
	
	Looking towards general realizations, we work with changes in linear systems of parameters, which we can interpret as \emph{relationships} between coordinate realizations of the vertices of a simplicial complex (Lemma \ref{lsopcond}, Remark \ref{lsopptconfig}, Remark \ref{lsopwallcoord}). They serve as a sort of middle ground between specific coordinate realizations and algebraic structures which might not store enough information on geometric properties that depend on the realizations in an important way. In this context, we make choices of linear systems of parameters that naturally store information on changes made under PL homeomorphisms and suspensions (Proposition \ref{lsopsuspedgediv} and Proposition \ref{edgecongenlsop}). It also clarifies the linear interpolation choice (e.g. for edge subdivisions) from Section \ref{presconpl} through examples such as Example \ref{crosspolylsop}. Afterwards, we apply this to study analogues of local convexity changes induced by PL homeomorphisms (Proposition \ref{flaglsoptrace}, Corollary \ref{plhomeomflagsign}). The changes observed are analogous to those of Section \ref{presconpl}. \\

	\color{black} 
	
	\subsection{Coordinate realizations and comparisons with algebraic structures}

	Recall that we took subdividing vertices and contraction points of edges to be realized by linear interpolation. We are looking towards local convexity-related information for more general choices of coordinate realizations. In order to figure out what properties are important we look at where we used these assumptions. Since it looks like they were often not important for key results, they seem to indicate that we can expect similar properties to hold for more general coordinate realizations. \\

	\begin{rem} \textbf{(Reasoning used in Section \ref{presconpl} and realizations) \\} \label{genreasonlsop}
	
		Although the results in Section \ref{presconpl} often assume subdividing vertices and contraction points are realized by linear interpolations, the reasoning behind them often does not require them. For example, this includes most of the proof of Theorem \ref{contractpointspaceexpress} (Lemma \ref{contractpointspace1}, Lemma \ref{contractpointspace2}, Lemma \ref{contractpointspace3}). This also applies to properties of intersections of convex sets on the boundary and connectivity properties of simplicial complexes involved (Proposition \ref{starunintspan}, Proposition \ref{starunionadj}, Proposition \ref{convwallspan}) used for restrictions on nearby convex contractible edges (Theorem \ref{convedconst}). Finally, the applications of Lemma \ref{contractpointspace1} for boundaries of cross polytopes and external edge subdivisions (Example \ref{contrcrosspolytop} and Theorem \ref{exsubdivcon}) inducing an empty contraction point space are independent of the linear independent assumption. Note that this property induces the emptiness of the contraction point space of the boundary of a cross polytope although more ``complicated'' conditions from Theorem \ref{contractpointspaceexpress} are satisfied. \\
	\end{rem}

	Keeping this mind, we consider examples and general structures of simplicial pseudomanifolds that have common properties with those used to study toric varieties. Our aim is to explore methods of analyzing combinatorial data describing how realizations of faces of simplicial complexes fit together. They will involve properties generalizing to simplicial complexes that do not necessarily come from simplicial fans associated to toric varieties. As a start, we look at relationships between realizations of vertices in boundaries of cross polytopes. \\

	\color{black}

	\begin{exmp} \label{crosssq0} \textbf{(Terms squaring to 0 in Chow rings and cross polytopes) \\} 
		To compare relevant algebraic structures, we note that the $h$-polynomial of the boundary of a $d$-dimensional cross polytope is $h(x) = (1 + x)^d$ and that it gives the even degree Betti numbers of the product $\mathbb{P}^1 \times \cdots \times \mathbb{P}^1$. In the Chow ring, we have that the square $x_\rho^2$ of each variable $x_\rho$ corresponding to a ray of the associated fan is equal to 0 (and that the converse holds by Proposition 1.13 on p. 18 -- 19 of \cite{Psig}). \\
	\end{exmp}
	
	This suggests looking at analogues of Chow rings of toric varieties for simplicial pseudomanifolds. In simplicial complexes, the substitute for rational equivalence in Chow rings of toric varieties comes from linear systems of parameters. They can be taken to be certain relationships between $k^d$-realizations of vertices. \\ 
	
	\begin{defn} \textbf{(Stanley--Reisner rings) \\} (Definition 5.1.2 on p. 209 of \cite{BH}, Definition 1.1 on p. 53 of \cite{St}) \\
		Given a simplicial complex $\Delta$ with vertex set $[n]$, its \textbf{Stanley-Reinser ring/face ring} is $k[x_1, \ldots, x_n]/I_\Delta$, where $I_\Delta$ is the ideal generated by monomials $x^G \coloneq \prod_{ \substack{p \in G \\ G \notin \Delta} } x_p$. 
	\end{defn}
	
	\begin{defn} (p. 81 of \cite{St}) \\
		Given a linear form $\theta = \sum_p \alpha_p x_p \in k[\Delta]_1$ and a face $F \in \Delta$, the \textbf{restriction} of $\theta$ to $F$, denoted $\theta|_F$ is defined by \[ \theta|_F \coloneq \sum_{p \in F} \alpha_p x_p. \]
	\end{defn}
	
	\begin{lem} \textbf{(Linear systems of parameters) \\} (Lemma 2.4 on p. 81 -- 82 of \cite{St}, p. 220 of \cite{BH}) \label{lsopcond}
		\begin{enumerate}
			\item Let $k[\Delta]$ be a face ring of Krull dimension $d$, and let $\theta_1, \ldots, \theta_d \in k[\Delta]_1$. Then $\theta_1, \ldots, \theta_d$ is a linear system of parameters (l.s.o.p.) for $k[\Delta]$ if and only if for every face $F \in \Delta$ (or equivalently, for every facet $F \in \Delta$), the restrictions $\theta_1|_F, \ldots, \theta_d|_F$ span a vector space of dimension equal to $|F|$.
			
			\item If $\theta_1, \ldots, \theta_d$ is an l.s.o.p. for $k[\Delta]$, then the quotient ring $k[\Delta]/(\theta_1, \ldots, \theta_d)$ is spanned (as a vector space over $k$) by the face monomials $x^F = \prod_{p \in F} x_p$ for $F \in \Delta$. \\
		\end{enumerate}
	\end{lem}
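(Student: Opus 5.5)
This is Stanley's Lemma 2.4 (p. 81--82 of \cite{St}), so the plan is to follow the standard argument. It rests on two facts. First, $\theta_1, \ldots, \theta_d$ is an l.s.o.p. exactly when $k[\Delta]/(\theta_1,\ldots,\theta_d)$ is a finite-dimensional $k$-vector space, since $\dim k[\Delta] = d$. Second, $k[\Delta]$ has a fine grading in which every monomial is supported on a single face. For each face $F$ we use the quotient map $k[\Delta] \to k[\Delta]/(x_p : p \notin F) \cong k[x_p : p \in F]$. This is a polynomial ring of Krull dimension $|F|$, and under this map each $\theta_i$ goes to $\theta_i|_F$.

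\emph{Necessity in Part 1.} Suppose $\theta_1, \ldots, \theta_d$ is an l.s.o.p. Passing to the quotient, $k[x_p : p \in F]/(\theta_1|_F, \ldots, \theta_d|_F)$ is again finite-dimensional. In a polynomial ring in $|F|$ variables, an ideal generated by linear forms spanning a subspace of dimension $r$ gives a quotient that is a polynomial ring in $|F| - r$ variables. Finite dimensionality therefore forces $r = |F|$.

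\emph{Sufficiency in Part 1.} Faces are contained in facets, and if the restrictions to a facet $G$ span all of $\Span\{x_p : p \in G\}$ then they span it on every subface. So it suffices to check the condition on facets. Assume the restrictions span for each facet $F$. Then each $x_p$ with $p \in F$ can be written as $\sum_i c_i \theta_i|_F$. Hence in $k[\Delta]$ we have
\[
x_p x^{\mathbf{a}} = \sum_i c_i \theta_i x^{\mathbf{a}}
\]
for any monomial $x^{\mathbf{a}}$ supported on $F$, because terms of $\theta_i$ involving vertices outside $F$ multiply $x^{\mathbf{a}}$ to zero unless they still lie in a face.

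\emph{Main obstacle.} That last caveat is the step that needs care. Stanley handles it by a filtration/induction on faces, and I would do the same. The goal is to show that every monomial whose support $F$ has some exponent at least $2$ lies in $(\theta) + (\text{monomials of larger or different support})$. Running a downward induction on the partial order of supports, together with the finiteness of $\Delta$, then shows that the face monomials $x^F$ span the quotient. This simultaneously proves Part 2 and finite dimensionality, which gives sufficiency.

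The cleanest formal route I would try first is graded Nakayama plus Noether normalization. Let $V = \Span(\theta_i)$. For every facet $F$ the map $V \to \Span\{x_p : p \in F\}$ is surjective, so the prime ideals $(x_p : p \notin F)$, which are the minimal primes of $k[\Delta]$, never contain $(\theta)$ together with the irrelevant ideal properly. Hence $\sqrt{(\theta)}$ is the irrelevant ideal. For Part 2 the same reduction shows directly that $x_p^2 x^{\mathbf{b}} \in (\theta) + \text{higher-support terms}$ modulo the Stanley--Reisner relations.
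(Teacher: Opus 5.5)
The paper gives no proof of this lemma; it quotes it from Stanley (Lemma III.2.4) and Bruns--Herzog. The benchmark is therefore the standard argument, and your outline reproduces it:
\begin{itemize}
\item the criterion that $\theta_1,\ldots,\theta_d$ is an l.s.o.p.\ iff $k[\Delta]/(\theta)$ is finite-dimensional;
\item necessity by passing to $k[\Delta]/(x_p : p\notin F)\cong k[x_p : p\in F]$, which is complete as you wrote it;
\item sufficiency, together with Part 2, by reducing non-squarefree monomials modulo $(\theta)$.
\end{itemize}

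The step you flagged is the one that must be written correctly, and your display is false as stated. It is an identity only when the support of $x^{\mathbf a}$ is the whole facet $F$. Your justification, that outside terms vanish ``unless they still lie in a face'', describes exactly the terms that survive. The correct reduction goes as follows. Write a non-squarefree monomial as $u = x_p x^{\mathbf b}$ with $p \in G = \mathrm{supp}(x^{\mathbf b})$. Pick a facet $F \supseteq G$ and scalars $c_i$ with $\sum_i c_i\theta_i|_F = x_p$ exactly, so that no other variable of $F$ survives. Then
\[
\Bigl(\sum_i c_i\theta_i\Bigr) x^{\mathbf b} \;=\; u \;+\; \sum_{q\notin F}\Bigl(\sum_i c_i\theta_{i,q}\Bigr)\, x_q x^{\mathbf b},
\]
and each $x_q x^{\mathbf b}$ is either $0$ or a monomial of the same degree as $u$ whose support $G\cup q$ strictly contains $G$. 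So the reduction lands in strictly larger supports. You must drop ``or different'', since it gives no well-founded order. A downward induction on $|G|$ then terminates, because when $G$ is a facet you may take $F = G$ and the correction sum is empty. Hence every monomial is congruent modulo $(\theta)$ to a combination of face monomials. This gives Part 2 and, since $\Delta$ is finite, the finite-dimensionality needed for sufficiency.

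Your last paragraph is a genuinely different, purely commutative-algebraic route to sufficiency, but it should read as follows. For each facet $F$, set $P_F = (x_p : p \notin F)$. The image of $(\theta)$ in $k[\Delta]/P_F \cong k[x_p : p\in F]$ is the whole irrelevant ideal, so $P_F + (\theta) = \mathfrak m$. Every prime containing $(\theta)$ contains some minimal prime $P_F$, hence equals $\mathfrak m$, so $\sqrt{(\theta)} = \mathfrak m$. Nakayama and Noether normalization play no role. This route is quicker for Part 1, but it yields no spanning set, so Part 2 still needs the monomial reduction above.
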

	
	\begin{rem} \textbf{(Choices of $k^d$-coordinates of vertices and linear systems of parameters) \\} \label{lsopptconfig}
		Rather than actual $k^d$-coordinates of the individual vertices $p \in V(\Delta) = [n]$ of a $(d - 1)$-dimensional Cohen--Macaulay simplicial complex $\Delta$, the linear system of parameters is about relations between the configurations of the points (analogous to common weighted averages/centers of mass). An interpretation involving stress spaces is considered in work of Lee (e.g. p. 394 of \cite{Lee}). The linear system of parameters forming a regular sequence implies that the (Krull) dimension is actually cut by 1 by each relation and a choice of $k^d$-coordinates of the vertices in $[n] \setminus F$ uniquely determines those of vertices of $F$. The non-redundancy of the linear relations can be thought of as linearly independent normal vectors of a collection of hyperplanes whose intersection has the expected dimension. \\
	\end{rem}
	
	The $k^d$-coordinate properties discussed above are discussed in the example below in the case of boundaries of cross polytopes. \\
	
	\begin{exmp} \textbf{(Boundaries of cross polytopes and suspensions) \\} \label{crosspolylsop}
		We note that linear systems of parameters give information on \emph{relationships} between $k^d$-coordinates of vertex realizations of $\Delta$ rather than specific coordinates themselves. They can also be interpreted as linear systems of parameters from repeated suspensions. For simplicity, we will describe this for boundaries of cross polytopes. Recall that the boundary of a cross polytope is formed by repeated suspensions. We focus on the $2$-dimensional case with $d = 3$. Writing the vertices as $x_{\pm 1}$, $x_{\pm 2}$, and $x_{\pm 3}$, Lemma \ref{lsopcond} implies that a possible linear system of parameters is given by the following linear forms:
		
		\begin{itemize}
			\item $\theta_1 \coloneq x_1 + x_{-1}$
			
			\item $\theta_2 \coloneq x_2 + x_{-2}$
			
			\item $\theta_3 \coloneq x_3 + x_{-3}$ 
		\end{itemize}
		
		Consider the Artinian reduction $A^\cdot(\Delta) = k[\Delta]/(\theta_1, \theta_2, \theta_3)$. To obtain coordinates of actual points in $k^3$, we need to make a \emph{choice} of coordinate representations of $x_1$, $x_2$, and $x_3$. The linear independence of these linear forms as elements of $k[x_1]$ is analogous to linear independence of points of a face of a simplicial complex if we take each face to form an actual simplex. A choice of points of coordinate representations of $x_1$, $x_2$, and $x_3$ determines ones for $x_{-1}$, $x_{-2}$, and $x_{-3}$. Finally, we note that the relations $\theta_1 \coloneq x_1 + x_{-1} = 0$, $\theta_2 \coloneq x_2 + x_{-2} = 0$, and $\theta_3 \coloneq x_3 + x_{-3} = 0$ are wall relations. Looking at the on-wall vertices, they indicate trivial conormal bundles as mentioned in Example \ref{crosssq0}. \\
		
	\end{exmp}

	However, allowing for arbitrary choices of linear systems of parameters does not give sufficient information to determine combinatorial data on how the simplicial complexes fit together. This occurs even after restricting to $(d - 1)$-dimensional balanced simplicial complexes (Definition 4.1 on p. 95 of \cite{St}), where the 1-skeleton has a proper $d$-coloring. In place of rational equivalence relations, the ``usual'' choice of linear system of parameters for a balanced simplicial complex also yields terms whose squares are equal to 0. \\
	
	\begin{prop} \label{colordegen} (Proposition 4.3 on p. 97 of \cite{St}) \\
		\vspace{-2mm}
		\begin{enumerate}
			\item Let $\Delta$ be a balanced simplicial complex of dimension $d - 1$ on the vertex set $V$. Define \[ \theta_i = \sum_{ \substack{ x \in V \\ \kappa(x) = i } } x \in k[\Delta] \] for $1 \le i \le d$. Note that $\theta_i$ is homogeneous of degree $e_i$ in $\mathbb{N}^d$. Then, $\theta_1, \ldots, \theta_d$ is a homogeneous system of parameters for $k[\Delta]$.
			
			\item Let $S = k[\Delta]/(\theta_1, \ldots, \theta_d)$. Then for every $x \in V$, we have $x^2 = x \theta_i = 0$ on $S$. Since we have a proper coloring, we also have $\theta_i^2 = 0$ for all $i \in [d]$. \\
		\end{enumerate}
	\end{prop}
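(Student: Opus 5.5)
To prove Part 1, I would check the l.s.o.p. criterion of Lemma \ref{lsopcond} one facet at a time. Let $F$ be a facet of $\Delta$. Since $\Delta$ is balanced of dimension $d-1$, the coloring $\kappa$ restricted to the $d$ vertices of $F$ is a bijection onto $[d]$. For each color $i$, write $x_i^F$ for the unique vertex of $F$ of that color. Restricting to $F$ then gives
\[ \theta_i|_F = x_i^F. \]
The restrictions $\theta_1|_F, \ldots, \theta_d|_F$ are therefore exactly the $d$ distinct variables of $F$. They span a space of dimension $d = |F|$, so Lemma \ref{lsopcond} shows that $\theta_1, \ldots, \theta_d$ is an l.s.o.p. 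Each $\theta_i$ is a sum of variables all of color $i$, so it is homogeneous of degree $e_i$ in the $\mathbb{N}^d$-grading.

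For Part 2, take a vertex $x$ of color $i$. In $k[\Delta]$ I would expand
\[ x\theta_i = x^2 + \sum_{\substack{y \ne x \\ \kappa(y) = i}} xy. \]
For each $y \ne x$ with $\kappa(y) = i$, the pair $\{x,y\}$ cannot be an edge, because a proper coloring never gives adjacent vertices the same color. So $xy \in I_\Delta$ and every cross term vanishes, leaving $x\theta_i = x^2$ in $k[\Delta]$. In the quotient $S$ we have $\theta_i = 0$, hence $x^2 = x\theta_i = 0$ in $S$.

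The last claim, $\theta_i^2 = 0$ in $S$, is immediate because $\theta_i$ is itself zero in $S$. It can also be seen directly: expanding $\theta_i^2$ gives squares $x^2$, which vanish in $S$, together with cross terms $xy$ for distinct vertices of color $i$, which vanish already in $k[\Delta]$.

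None of these steps is a real obstacle. The one point to state carefully is that the cross terms $xy$ are zero in the face ring itself, not merely in $S$, and this is exactly where the properness of the coloring is used. The other point to state explicitly is that each facet contains exactly one vertex of each color, which is the definition of balanced applied to a $(d-1)$-dimensional complex.
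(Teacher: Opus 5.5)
Your proof is correct, and it is the standard argument from the source the paper cites (the paper gives no proof of its own, only the reference to Stanley). The restriction $\theta_i|_F$ is the unique vertex of $F$ of color $i$, so Lemma \ref{lsopcond} applies, and $x\theta_{\kappa(x)} = x^2$ already in $k[\Delta]$ because distinct vertices of the same color never form an edge. One small refinement: purity is not needed, since for any face $F$ and any $x \in F$ you have $\theta_{\kappa(x)}|_F = x$, so the restrictions span a space of dimension $|F|$ even if some facet has fewer than $d$ vertices.
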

	
	This indicates that a ``direct converse'' does \emph{not} seem to hold for balanced simplicial complexes when we allow arbitrary linear systems of parameters. In other words, we need some additional structure connected to information on $k^d$-coordinates used. To address this, we will focus on the linear systems of parameters themselves. In the case of toric varieties, we will specifically choose the rational equivalence relations. They give a middle ground between algebraic structures and specific choices of $k^d$-coordinate realizations. \\

	\begin{rem} \textbf{(Linear systems of parameters and wall relations vs. coordinate representations) \\} \label{lsopwallcoord}
		\begin{enumerate}
			\item Changes in linear systems of parameters from PL homeomorphisms of simplicial complexes via Proposition \ref{lsopsuspedgediv} and Proposition \ref{edgecongenlsop} give analogues of local convexity properties from wall relations of toric varieties. Some sign-related comments are in Remark \ref{edgesubdivconreal} and applied in later results.
			
			\item Fix a wall crossing involving two facets $F, F' \in \Delta$ sharing a wall. Given an initial linear system of parameters $\theta$ for the Cohen--Macaulay simplicial complex $\Delta$, the wall relations seem to be related to the space of linear systems of parameters $\widetilde{\theta}$ with $\widetilde{\theta}|_p = \theta|_p$ for all $p \in [n] \setminus (F \cup F')$. If we also a fix a choice of coordinates $x_p$ for $p \in F \cup F'$, then we have the relation \[ \sum_{p \in F \cup F'} (\widetilde{\theta}_{i, p} - \theta_{i, p}) x_p = 0 \] for all $i \in [d]$. This relation must be a multiple of the wall relation associated to $F \cup F'$. Since the off-wall rays must have nonzero coefficients of the same sign, there can only be at most one linear system of parameters of the form given in Part 2 of Proposition \ref{lsopsuspedgediv}. \\
			
			\item A sort of ``dual'' condition is related to $\ker \begin{pmatrix} \theta|_\tau & \theta|_a & \theta|_{a'} \end{pmatrix}$, where $a$ and $a'$ are off-wall vertices in the wall crossing associated to $F \cup F'$ and $\tau = F \cap F'$ is the wall. Suppose that we start with a linear system of parameters $\theta_1, \ldots, \theta_d$ and also fix $k^d$-coordinate representatives of $p \in [n] \setminus (F \cup F')$. Given these conditions, the space of possible choices of $k^d$-coordinates for $p \in F \cup F'$ is parametrized by this kernel since \[ \sum_{p \in F \cup F'} \theta_{i, p}(\overline{x}_p - x_p) = 0. \]
		\end{enumerate}
	\end{rem}

	\color{black}

	Building on Example \ref{crosspolylsop}, we will construct another example (Example \ref{edgelsopavg}) where we have a choice of linear system of parameters giving a direct connection to wall relations. It turns out that this choice is also compatible with linear interpolations. Before describing the linear systems of parameters involved, we will relate wall relations to restrictions of the Chow ring replacement $A^\cdot(\Delta)$ discussed above. \\

	\begin{rem} \textbf{(Wall crossing analogues from linear systems of parameters) \\} \label{wallanaloguelsop}
		Looking towards the behavior of the Chow ring replacements with respect to restriction relates the linear systems of parameters to analogues of wall relations. Coning (Theorem 7 on p. 396 -- 397 of \cite{Lee}) can be used to obtain the decomposition $A^k(\Delta) \cong A^k(\Ast_\Delta(p)) \oplus A^k(\St_\Delta(p)) \cong A^k(\Ast_\Delta(p)) \oplus x_p A^{k - 1}(\lk_\Delta(p))$ for $1 \le k \le d$ when $\dim \Delta = d - 1$. If $\Delta$ is flag (e.g. when it has convex wall crossings), we can apply this repeatedly to get $A^k(\Delta) \cong A^k(\Ast_\Delta(F)) \oplus A^k(\St_\Delta(F)) \cong A^k(\Ast_\Delta(F)) \oplus x_p A^{k - 1}(\lk_\Delta(p))$ for any $(m - 1)$-dimensional face $F \in \Delta$ with $m \le k$. Consider the second term of the direct sum. Suppose that we write the terms we multiply $x^F$ by as sums of the form $\alpha + \beta$ with $\alpha \in k[\lk_\Delta(F)]$ and $\beta \in (\theta_1, \ldots, \theta_d)$. Then, multiplication by $x^F$ filters out terms of $\beta$ that do \emph{not} form a face of $F$. If $m = d - 1$ (i.e. $F$ a wall/ridge), then the vertices that can appear in $\Delta$ are those of $F$ or the two other vertices of $\Delta$ attached to it. In some sense, such a coordinate relation for $\lk_\Delta(F)$ indicates that the restriction of the linear system of parameters to relevant faces gives an analogue of wall relations.  \\
	\end{rem}
	
	\color{black}

	Given the discussion above, we construct linear systems of parameters that naturally record PL homeomorphism information. This will be our main tool analyzing analogues of local convexity relations. Some discussion related to sign choices in Part 2 are in Remark \ref{edgesubdivconreal}. This is something we will keep in mind in later applications. \\

	\begin{prop} \textbf{(Linear systems of parameters vs. suspensions and edge subdivisions) \\} \label{lsopsuspedgediv} \\
		\vspace{-1mm}
	
		Let $\Delta$ be a $(d - 1)$-dimensional Cohen--Macaulay simplicial complex and $\theta_1, \ldots, \theta_d$ be a linear system of parameters for $\Delta$. 
		
		\begin{enumerate}
			\item Let $\Susp(\Delta)$ be the suspension $\Delta$ by vertices $p$ and $q$. Then $\theta_1, \ldots, \theta_d, x_p + x_q$ is a linear system of parameters for $\Susp(\Delta)$. Under such a linear system of parameters, we have that $x_p = -x_q$ and $x_p^2 = x_q^2 = 0$ in $A^\cdot(\Susp(\Delta))$. \\
			
			\item Let $\Delta'$ be the (stellar) subdivision of $\Delta$ with respect to an edge $e = \{ a, b \} \in \Delta$ and $v$ be the subdividing vertex.

			For any $\alpha, \beta \in k \setminus 0$, the following collection of linear forms $\ell_i$ for $i \in [d]$ is a linear system of parameters for $\Delta'$:

			\begin{itemize}
				\item For $q \ne v$, set $\ell|_q \coloneq \theta|_q$. In other words, we have $\ell_{i, q} = \theta_{i, q}$ for all $i \in [d]$. 
				
				\item Define $\ell|_v \coloneq \alpha \theta|_a + \beta \theta|_b$. In other words, we have $\ell_{i, v} = \alpha \theta_{i, a} + \beta \theta_{i, b}$ for all $i \in [d]$. If $\alpha + \beta = 1$, then $\ell|_v$ is a point of $k^d$ on the line containing $\theta|_a$ and $\theta|_b$ and it lies on the line segment $\overline{\theta|_a \theta|_b} \subset k^d$ if $0 < \alpha, \beta < 1$. If $\alpha + \beta = -1$, then $\ell|_v$ is such a point multiplied by $-1$. 
			\end{itemize}

		\end{enumerate}
	\end{prop}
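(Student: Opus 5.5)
We verify both parts using the facet criterion of Lemma \ref{lsopcond}: a collection of $d$ linear forms is an l.s.o.p. for a $(d-1)$-dimensional complex exactly when, for every facet $G$, the restriction vectors $\ell|_p \in k^d$ for $p \in G$ are linearly independent. Here $\ell|_p = (\ell_{1,p}, \ldots, \ell_{d,p})$ is the column of coefficients of $x_p$. Both statements then reduce to checking linear independence facet by facet, using that $\theta$ already has this property on facets of $\Delta$.

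\textbf{Suspension.} The complex $\Susp(\Delta)$ has dimension $d$ and Krull dimension $d+1$. Each of its facets has the form $F \cup \{p\}$ or $F \cup \{q\}$ for a facet $F \in \Delta$. We regard the forms as living in $k^{d+1}$, with the new coordinate coming from $\theta_{d+1} := x_p + x_q$.
\begin{itemize}
\item For $r \in F$, the restriction is $(\theta|_r, 0)$.
\item The restrictions of $p$ and $q$ are both $(0,\dots,0,1)$.
\end{itemize}
The matrix for $F \cup p$ is therefore block triangular. Its top-left block consists of the columns $\theta|_r$, $r \in F$, which are independent by hypothesis, and its corner entry is $1$. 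So the columns are independent, and the same holds for $F \cup q$. In the quotient $A^\cdot(\Susp(\Delta))$ we have $x_p + x_q = 0$, so $x_p = -x_q$. Since $\{p,q\}$ is not a face, $x_p x_q = 0$ in the face ring. Hence $x_p^2 = -x_p x_q = 0$, and likewise $x_q^2 = 0$.

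\textbf{Edge subdivision.} The facets of $\Delta'$ are of two kinds.
\begin{itemize}
\item Facets of $\Delta$ not containing $e$. These are unchanged, and their restrictions equal those of $\theta$, so they are independent.
\item Sets $(F \setminus a) \cup v$ and $(F \setminus b) \cup v$ for facets $F \supset e$.
\end{itemize}
Take $(F \setminus a) \cup v$, with columns $\theta|_r$ for $r \in F \setminus \{a\}$ together with $\alpha\theta|_a + \beta\theta|_b$. The column $\theta|_b$ is already present, so subtracting $\beta\theta|_b$ is a column operation that preserves rank and leaves $\alpha\theta|_a$. Because $\alpha \neq 0$, the resulting matrix has the same rank as the column set $\{\theta|_r : r \in F\}$, which is $d$. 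Symmetrically, $(F \setminus b) \cup v$ uses $\beta \neq 0$. This step is the only one where the hypothesis $\alpha, \beta \neq 0$ enters, and it is the crux of the argument, though a short one.

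\textbf{Geometric remarks.} If $\alpha + \beta = 1$, then $\ell|_v$ is an affine combination of $\theta|_a$ and $\theta|_b$, so it lies on the line through them. Writing $\alpha = 1-\eta$ and $\beta = \eta$, it lies on the segment exactly when $\eta \in (0,1)$. If $\alpha + \beta = -1$, then $(-\alpha) + (-\beta) = 1$, so $\ell|_v$ is $-1$ times the affine combination with coefficients $-\alpha$ and $-\beta$, which is such a point. I expect no real obstacle. The only care needed is the bookkeeping of ambient dimension in the suspension case, namely appending the new coordinate to the old restrictions.
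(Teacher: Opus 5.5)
Your proof is correct and takes essentially the same route as the paper. Both arguments verify Stanley's restriction criterion (Lemma \ref{lsopcond}) via column/row rank: a triangular observation handles the suspension, and for the subdivision the column operation stripping the $\theta|_b$ (resp.\ $\theta|_a$) component from $\ell|_v$ is exactly where $\alpha, \beta \neq 0$ are used. The only differences are cosmetic: you check facets rather than every face $G \cup v$, $G \cup \{a,v\}$, $G \cup \{b,v\}$ containing $v$, and you spell out $x_p^2 = -x_p x_q = 0$ and the line/segment remarks, which the paper leaves implicit.
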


	\begin{proof}
		\begin{enumerate}
			\item Recall that a collection of linear forms $\ell_1, \ldots, \ell_d$ forms a linear system of parameters for a $(d - 1)$-dimensional simplicial complex $S$ if and only if the restrictions $\ell_1|_F, \ldots, \ell_d|_F$ span a vector space of dimension $|F|$ for every face $F \in S$ (equivalently for every facet $F \in S$) (Lemma \ref{lsopcond}). \\
			
			Since $\theta_1, \ldots, \theta_d$ forms a linear system of parameters for $\Delta$, we have that $\theta_1|_F, \ldots, \theta_d|_F$ span a vector space of dimension $|F|$ for every face $F \in \Delta$. We note that $x_p + x_q$ restricts to $0$ for $F \in \Delta$. Thus, it remains to study the restrictions to the faces of $\Susp(\Delta)$ that contain $p$ or $q$. Without loss of generality, we can assume that they contain $p$. Since they are of the form $G = F \cup p$ for $F \in \Delta$, we have that $\theta_i|_G = \theta_i|_F$ for all $1 \le i \le d$. This implies that $\dim_k(\theta_1|_G, \ldots, \theta_d|_G) = |F|$. Since the $\theta_i$ do \emph{not} involve $x_p$ or $x_q$ for \emph{any} $i \in [d]$, this implies that $\dim_k(\theta_1|_G, \ldots, \theta_d|_G, (x_p + x_q)|_G = x_p) = |F| + 1 = |G|$. Thus, the linear forms $\theta_1, \ldots, \theta_d, x_p + x_q$ give a linear system of parameters for $\Susp(\Delta)$. \\
			
			\item As in Part 1, we will study the dimensions of spans of restrictions of linear forms to faces of the given simplicial complex. \\
			
			We can split the faces of $\Delta'$ into those that contain $v$ and those that do \emph{not} contain $v$. Note that $\lk_{\Delta'}(v) = \Susp_{a, b}(\lk_\Delta(e))$. \\ 
			
			Consider the restriction of the linear forms listed to faces $F \in \Delta'$ that do \emph{not} contain $v$. These correspond to the faces of $\Delta$ that do \emph{not} contain $e$. Since $\ell_i|_F = \theta_i|_F$ by definition and $\theta$ is a linear system of parameters for $\Delta$, the restrictions $\ell_1|_F, \ldots, \ell_d|_F$ span a vector space of dimension $|F|$. It remains to check the restrictions to faces $F \in \Delta'$ that contain $v$. \\

			If $F \in \Delta'$ and $F \ni v$, then we have that $F = G \cup v$, $F = G \cup \{ a, v \}$, or $F = G \cup \{ b, v \}$ for some $G \in \lk_\Delta(e)$. Since $\theta$ is a linear system of parameters for $\Delta$, we have that $\dim_k (\ell_1|_G, \ldots, \ell_d|_G) = |G|$ and $\dim_k (\ell_1|_{G \cup a}, \ldots, \ell_d|_{G \cup a}) = \dim_k (\ell_1|_{G \cup b}, \ldots, \ell_d|_{G \cup b}) = |G| + 1$. Consider the matrix $(\ell_{i, p})_{ \substack{ 1 \le i \le d \\ p \in H } }$ with $\ell_{i, j}$ equal to the coefficient of $x_p$ in $\ell_i$. Given a face $H \in \Delta$, these dimension statements can be restated as the row rank of this $d \times |H|$ matrix being equal to $|H|$. Since the row rank is equal to the column rank, this is equivalent to all the columns being linearly independent. In our context, this can be restated as $\dim_k \Span(\theta_1|_F, \ldots, \theta_d|_F) = \dim_k \Span(\theta|_p)_{p \in F}$ for any linear system of parameters $\theta$ of a $(d - 1)$-dimensional simplicial complex $F$. In order for the dimension to increase by 1 after appending adding the vertex $v$ to the faces $H$ considered, the column rank increases by 1 if and only if $\ell|_v = \alpha \theta|_a + \beta \theta|_b \notin \Span_k(\ell|_p)_{p \in H}$. Here, we consider $H = G$, $H = G \cup a$, or $H = G \cup b$ for $G \in \lk_\Delta(e)$. \\
			
			Suppose that $\ell|_v = \alpha \theta|_a + \beta \theta|_b  \in \Span_k(\ell|_p)_{p \in G}$ for some $G \in \lk_\Delta(e)$. Since $G \not\ni v$, we have that $\ell|_p = \theta|_p$ for all $p \in G$. This means that $\ell|_v = \alpha \theta|_a + \beta \theta|_b  \in \Span_k(\theta|_p)_{p \in G}$. However, this would contradict the assumption that \[ \dim \Span_k(\theta_1|_{G \cup e}, \ldots, \theta_d|_{G \cup e}) = \dim \Span_k(\theta|_p)_{p \in G \cup e } = |G| + 2 \] since the columns must be linearly independent in order for this to occur. Thus, we have that $\ell|_v \notin \Span_k(\ell|_p)_{p \in G}$ for all $G \in \lk_\Delta(e)$. \\
			
			If $\ell|_v = \alpha \theta|_a + \beta \theta|_b  \in \Span_k(\ell|_p)_{p \in G \cup a} = \Span_k(\theta|_p)_{p \in G \cup a} $ for some $G \in \lk_\Delta(e)$, then subtracting $\alpha \theta|_a$ from column $v$ of the matrix using an elementary column operation would imply that the column rank of $(\theta|_p)_{p \in G \cup e}$ is $\le |G| + 1$. However, this would contradict the assumption that $\theta$ is a linear system of parameters of $\Delta$ since the latter would imply that \[ \dim \Span_k(\theta|_p)_{p \in G \cup e} = |G| + 2. \] The same reasoning also implies that $\ell|_v \notin \Span_k(\theta|_p)_{p \in G \cup b}$ for all $G \in \lk_\Delta(e)$. Thus, all the columns of the matrix $(\ell_{i, p})_{ \substack{ 1 \le i \le d \\ p \in H } }$ are linearly independent if $H \in \Delta'$ and $H \ni v$ and $\ell_1, \ldots, \ell_d$ is a linear system of parameters for $\Delta'$.

		\end{enumerate}
	\end{proof}

	\color{black}

	The counterpart of this result for edge contractions is stated below. \\

	\begin{prop} \textbf{(Edge contractions and generic linear interpolations) \\} \label{edgecongenlsop} \\ 
		\vspace{-7mm}
		
		Let $\Delta$ be a Cohen--Macaulay $(d - 1)$-dimensional simplicial complex and $\theta_1, \ldots, \theta_d$ be a linear system of parameters for $\Delta$. Suppose that $k$ is an infinite field. \\
		
		Fix an edge $e = \{ a, b \} \in \Delta$. Let $\widetilde{\Delta}$ be the contraction of the edge $e$ in $\Delta$ and $w$ be the point that the edge is contracted to (i.e. $a$ and $b$ both identified with/converge to $w$). For all but finitely many choices of $\omega \in k \setminus 0$, the following collection of linear forms $\ell_i$ for $i \in [d]$ is a linear system of parameters for $\widetilde{\Delta}$:
		
		\begin{itemize}
			\item Given a vertex $q \ne a, b$ of $\Delta$, set $\ell|_q \coloneq \theta|_q$. In other words, we have that $\ell_{i, q} = \theta_{i, q}$ for all $i \in [d]$. 
			
			\item Define $\ell|_w \coloneq (1 - \omega) \theta|_a + \omega \theta|_b$. In other words, we have that $\ell_{i, w} = (1 - \omega) \theta_{i, a} + \omega \theta_{i, b}$ for all $i \in [d]$ and $\ell|_w$ is a point of $k^d$ on the line $L(\theta|_a, \theta|_b)$ spanned by $\theta|_a$ and $\theta|_b$. If $\omega \in (0, 1)$, then $\ell|_w$ is a point of $k^d$ on the line $L(\theta|_a, \theta|_b)$ spanned by $\theta|_a$ and $\theta|_b$ lying \emph{between} $\theta|_a$ and $\theta|_b$. \\
		\end{itemize}

	\end{prop}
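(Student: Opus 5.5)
We use the criterion of Lemma \ref{lsopcond}: $\ell_1, \ldots, \ell_d$ is a linear system of parameters for $\widetilde{\Delta}$ exactly when, for every facet $H \in \widetilde{\Delta}$, the column vectors $\ell|_p$, $p \in H$, are linearly independent in $k^d$. This is the column-rank reformulation already used in the proof of Part 2 of Proposition \ref{lsopsuspedgediv}. The faces of $\widetilde{\Delta}$ split into those not containing $w$ and those containing $w$.

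\textbf{Faces without $w$.} These are faces of $\Delta$ avoiding both $a$ and $b$. Their columns are unchanged, $\ell|_q = \theta|_q$, so independence is inherited from $\theta$ and holds for every $\omega$.

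\textbf{Faces containing $w$.} These have the form $H = G \cup w$ with $G \cup a \in \Delta$ or $G \cup b \in \Delta$, where $G$ contains neither $a$ nor $b$. Independence of the columns of $H$ means
\[
(1-\omega)\theta|_a + \omega \theta|_b \notin \Span_k(\theta|_p)_{p \in G}.
\]
Let $W_G = \Span_k(\theta|_p)_{p \in G}$ and consider the affine line $L(\theta|_a, \theta|_b)$ parametrized by $\omega$. By hypothesis, at least one endpoint of this line lies off $W_G$: if $G \cup a \in \Delta$, then $\theta|_a \notin W_G$ (this is the parameter value $\omega = 0$), and symmetrically for $b$ (the value $\omega = 1$). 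Hence the line is not contained in the linear subspace $W_G$. A line not contained in a subspace meets it in at most one point, so at most one value of $\omega$ is bad for each $G$.

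\textbf{Counting the exceptions.} There are finitely many such $G$. Together with the excluded value $\omega = 0$, this leaves only finitely many exceptional $\omega$. Since $k$ is infinite, all remaining $\omega \in k \setminus 0$ give an l.s.o.p.

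\textbf{Main obstacle.} The delicate step is the second one. When $G \cup a$ and $G \cup b$ are faces but $G \cup \{a, b\}$ is not (the non-flag situation), $\theta|_a$ and $\theta|_b$ need not be independent modulo $W_G$. This is exactly why we get a generic statement, rather than the "any $\alpha, \beta$" statement of Proposition \ref{lsopsuspedgediv}.

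The affine parametrization handles this case uniformly: we only need one endpoint of the line to lie off $W_G$, not both. We should also note that if the line passes through $0 \in W_G$, this happens at a single parameter value, which is then among the finitely many excluded ones. The remark that $\omega \in (0,1)$ places $\ell|_w$ between $\theta|_a$ and $\theta|_b$ is immediate from the formula; it makes sense when $k$ is ordered, e.g. $k = \mathbb{R}$.
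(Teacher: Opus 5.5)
Your proof is correct and is essentially the paper's argument: the column-rank form of Lemma \ref{lsopcond}, unchanged columns for faces avoiding $w$, and, for each $G$ with $G \cup a$ or $G \cup b$ in $\Delta$, the observation that $L(\theta|_a,\theta|_b)$ has an endpoint off $\Span_k(\theta|_p)_{p \in G}$ and so meets it in at most one point, followed by a finite union over the $G$. One correction to your closing remark, which is not needed for the proof: exceptional values of $\omega$ can occur for any $G$ with $G \cup \{a,b\} \notin \Delta$, not only in the non-flag configuration you describe, so non-flagness is not exactly why the statement is only generic. For example, in the octahedron with $\theta_i = x_i + x_{-i}$, $e = \{1,2\}$ and $G = \{-2,3\}$ (so $G \cup 1 \in \Delta$ but $G \cup 2 \notin \Delta$), the value $\omega = 1$ fails.
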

	
	\begin{proof}
		Recall that $\dim_k \Span(\ell_1|_F, \ldots, \ell_d|_F) = \dim_k \Span(\ell|_p)_{p \in F}$ for any $d$-tuple of linear forms $\ell_1, \ldots, \ell_d$ on $x_1, \ldots, x_n$ (see proof of Proposition \ref{lsopsuspedgediv}). In particular, this implies that \[ \dim_k \Span(\ell_1|_F, \ldots, \ell_d|_F) = |F| \] if and only if $(\ell|_p)_{p \in F}$ has linearly independent columns. \\ 
		
		We will split the facets of the edge contraction $\widetilde{\Delta}$ into those that contain $w$ and those that do \emph{not} contain $w$. If a facet $F \in \widetilde{\Delta}$ does \emph{not} contain $w$, then $F \in \Delta$ and does \emph{not} contain $a$ or $b$. Since $\ell|_q = \theta|_q$ for $q \ne a, b$, we have that $\ell_i|_F = \theta_i|_F$ for each $i \in [d]$. This implies that $\dim_k \Span(\ell_1|_F, \ldots, \ell_d|_F) = \dim_k \Span(\theta_1|_F, \ldots, \theta_d|_F) = |F| = d$ since $\theta$ is a linear system of parameters for $\Delta$ and $F \in \Delta$. \\
		
		Now suppose that $F \in \widetilde{\Delta}$ is a facet that \emph{does} contain $w$. Since $\widetilde{\Delta}$ is the contraction of the edge $e = \{ a, b \} \in \Delta$, we have that $\lk_{\widetilde{\Delta}}(w) = \lk_\Delta(a) \cup \lk_\Delta(b)$. This means that $F = G \cup w$ for some facet $G \in \lk_\Delta(a)$ or $G \in \lk_\Delta(b)$. Note that $G \in \Delta$ in both cases. Since $\theta$ is a linear system of parameters for $\Delta$ and $\ell|_q = \theta|_q$ for a vertex $q \ne a, b$ of $\Delta$, it suffices to show that considering the restrictions to $F = G \cup w$ instead of $G$ increases the (column) rank of the restriction matrix $(\ell|_p)_{p \in H}$ by 1. If $G \in \lk_\Delta(a)$, then $\theta|_a \notin \Span_k (\ell|_p)_{p \in G} = \Span_k (\theta|_p)_{p \in G}$ since $G \cup a \in \Delta$ and $\theta$ is a linear system of parameters for $\Delta$. In particular, this means that the line $L(\theta|_a, \theta|_b) \not\subset \Span_k (\ell|_p)_{p \in G} = \Span_k (\theta|_p)_{p \in G}$ and $L(\theta|_a, \theta|_b) \cap \Span_k (\ell|_p)_{p \in G} = L(\theta|_a, \theta|_b) \cap \Span_k (\theta|_p)_{p \in G}$ is a finite subset of $k^d$. Since $\lk_\Delta(a)$ has finitely many facets, this also implies that \[ \mathcal{S}_a \coloneq \bigcup_{G \in \lk_\Delta(a)} \left( L(\theta|_a, \theta|_b) \cap \Span_k (\ell|_p)_{p \in G} \right) = L(\theta|_a, \theta|_b) \cap \left( \bigcup_{G \in \lk_\Delta(a)} \Span_k (\ell|_p)_{p \in G} \right) \] is finite. \\
		
		Using the same reasoning, we have that \[ \mathcal{S}_b \coloneq \bigcup_{H \in \lk_\Delta(b)} \left( L(\theta|_a, \theta|_b) \cap \Span_k (\ell|_p)_{p \in H} \right) = L(\theta|_a, \theta|_b) \cap \left( \bigcup_{H \in \lk_\Delta(b)} \Span_k (\ell|_p)_{p \in H} \right) \] is finite. \\
		
		If $\ell|_w \in L(\theta|_a, \theta|_b) \setminus (\mathcal{S}_a \cup \mathcal{S}_b)$, this means that $\ell|_v \notin \Span_k (\ell|_p)_{p \in A}$ for any facet $A \in \lk_\Delta(a) \cup \lk_\Delta(b)$ and $\ell$ is a linear system of parameters of the contraction $\widetilde{\Delta}$ of $\Delta$ with respect to the edge $e = \{ a, b \} \in \Delta$. Since every point of the line $L(\theta|_a, \theta|_b)$ is of the form $\theta|_a + \omega (\theta|_b - \theta|_a) = (1 - \omega) \theta|_a + \omega \theta|_b$ for some $\omega \in k$, we have that $\ell|_w = (1 - \omega) \theta|_a + \omega \theta|_b$ for some ``generic'' $\omega \in k \setminus 0$. If $\omega \in (0, 1)$, then $\ell|_w$ lies between $\theta|_a$ and $\theta|_b$ on the line $L(\theta|_a, \theta|_b)$. 
		
	\end{proof}

	We specialize the results above to boundaries of cross polytopes. The linear of system of parameters in Proposition \ref{lsopsuspedgediv} will be compatible with the ``usual'' linear interpolation coordinates for subdividing vertices. It gives a concrete example where an appropriate choice of linear system of parameters gives rise to actual wall relations (see Remark \ref{wallanaloguelsop}). Note that is not possible for contractions of edges of boundaries of cross polytopes to preserve convexity of wall crossings (Example \ref{contrcrosspolytop}).  \\

	\begin{exmp} \textbf{(Edge subdivisions of boundaries of cross polytopes) \\} \label{edgelsopavg}

		Using the initial vertices $\pm 1, \pm 2, \pm 3$ from Example \ref{crosspolylsop}, we will write $12$ to mean the vertex subdividing the edge $\{ 1, 2 \} \in \Delta$. Let $\Delta'$ be the (stellar) subdivision of $\Delta$ with respect to the edge $\{ 1, 2 \}$. Suppose that $\alpha = \beta = -\frac{1}{2}$ in Part 2 of Proposition \ref{lsopsuspedgediv}. Then, the linear system of parameters from Example \ref{crosspolylsop} is now modified to $\ell_1 = x_1 + x_{-1} - \frac{1}{2} x_{12}$, $\ell_2 = x_2 + x_{-2} - \frac{1}{2} x_{12}$, and $\ell_3 = x_3 + x_{-3}$. \\
		
		As mentioned in Remark \ref{lsopptconfig}, we need to make an initial choice of coordinates in order to determine the rest of them. We will take $x_1, x_2, x_3$ to each be assigned to the elements of some basis of $k^3$. Suppose that $x_{12} = \frac{x_1 + x_2}{2}$. We now consider the effect of the edge subdivision on wall relations using this linear system of parameters and choice of coordinates for the subdividing point $x_{12}$. \\
		
		The facets of $\Delta'$ containing $12$ are those of the form $\{ 1, 12, \pm 3 \}$ or $\{ 12, 2, \pm 3 \}$. The wall crossings starting with such facets take one of the following forms: \\
		
		\begin{itemize}
			\item Replacing $3$ by $-3$ or $-3$ by $3$. Both facets in the wall crossing contain $12$. The associated wall relation is \[ x_3 + x_{-3} = 0. \] The coefficients of the on-wall vertices ($1$ and $12$ or $12$ and $2$) are 0. This is the same wall relation as the ones among facets of $\Delta$ replacing $3$ by $-3$. \\
			
			\item Replacing $1$ by $2$ or $2$ by $1$. Both facets in the wall crossing contain $12$. The chosen relation $x_{12} = \frac{x_1 + x_2}{2}$ implies that the wall relation is \[ x_1 + x_2 - 2 x_{12} = 0. \] Note that the coefficient of $x_3$ or $x_{-3}$ (depending on the wall crossing) is 0 in both such wall crossings. This is a new type of wall crossing not considered in $\Delta$. \\
			
			\item Replacing $12$ in $\{ 1, 12, \pm 3 \}$ by $-2$. The second facet in the wall crossing does \emph{not} contain the subdividing vertex $12$. In this wall crossing, we note that $12$ plays the role of $2$ in $\Delta$ in the facet we start with. Under our chosen linear system of parameters, we have that $x_2 + x_{-2} - \frac{1}{2} x_{12} = 0$ from $\ell_2 = 0$. The choice $x_{12} = \frac{x_1 + x_2}{2}$ implies that $x_2 = 2 x_{12} - x_1$. Substituting this into the relation $x_2 + x_{-2} - \frac{1}{2} x_{12} = 0$, we have that $2 x_{12} - x_1 + x_{-2}  - \frac{1}{2} x_{12} = 0$ and the wall relation is \[ \frac{3}{2} x_{12} + x_{-2} - x_1 = 0. \] As in the previous case, the coefficient of $x_3$ or $x_{-3}$ is 0 for such wall crossings. This is an analogue of wall crossings in $\Delta$ replacing $2$ by $-2$ with the wall relation $x_2 + x_{-2} = 0$. In some sense, the wall crossing after the edge subdivision is ``more convex'' with respect to the on-wall vertex $1$ (see Lemma 14-1-7 on p. 421 of \cite{Mat})  \\
			
			\item Replacing $12$ in $\{ 12, 2, \pm 3 \}$ by $-1$. Then, the same reasoning as the previous case yields the wall relation is \[ \frac{3}{2} x_{12} + x_{-1} - x_2 = 0. \] The coefficient of $x_3$ or $x_{-3}$ is 0 for such wall crossings. This is an analogue of wall crossings in $\Delta$ replacing $1$ by $-1$ with the wall relation $x_1 + x_{-1} = 0$. As in the previous case, the wall crossing is ``more convex'' with respect to the on-wall vertex $2$ (see Lemma 14-1-7 on p. 421 of \cite{Mat}). \\
		\end{itemize}

	\end{exmp}

	\begin{rem} \textbf{(Realizations of edge subdivisions and contractions) \\} \label{edgesubdivconreal}
		We note that the increased availability of convex wall crossings after edge subdivisions using linear interpolation as in Proposition  \ref{edgesubdivwallconv} is consistent with preservation of flagness of simplicial complexes after edge subdivisions. Note that we can show the linear system of parameters in Proposition \ref{lsopsuspedgediv} can be made consistent with linear interpolations (see Example \ref{edgelsopavg}). Since locally convex simplicial complexes are flag (Proposition 5.3 on p. 279 -- 280 of \cite{LR}), we can think of going further along edge subdivisions among flag simplicial pseudomanifolds PL homeomorphic to the starting one in terms of ``increased convexity''. This is something we take into account in the case of using negative weights to subdividing vertices in Proposition \ref{lsopsuspedgediv} and later applications of it. Since edge contractions make it more difficult to preserve convexity and are inverses of edge subdivisions when they belong to a subdivided edge, we will assign positive weights. \\
		
		This can be described more concretely using ideas from the computations in Example \ref{edgelsopavg}, where the initial linear system of parameters consists of actual wall relations. In particular, the substitution of a linear interpolation expression of the form $x_v = (1 - \eta) x_a + \eta x_b$ with $\eta \in (0, 1)$ for the subdividing vertex $ab$ of an edge $e = \{ a, b \}$ yields coefficient changes of the new wall relations that are compatible with the preservation of convexity discussed above and in Proposition \ref{edgesubdivwallconv}. This suggests that negative weights assigned to subdividing vertices in Proposition \ref{lsopsuspedgediv} are compatible with the conditions of having a wall relation and preservation of convexity of wall crossings. Considering edge contractions as a sort of ``inverse'' to edge subdivisions would then mean assigning negative signs to contraction points. \\
	\end{rem}
	
	\color{black}
	
	\subsection{PL homeomorphism-induced changes on analogues of wall relations}

	\color{black}

	We now apply the framework involving linear systems of parameters and related signs from the previous subsection (e.g. Proposition \ref{lsopsuspedgediv} and Proposition \ref{edgecongenlsop}, Remark \ref{edgesubdivconreal}) to analogues of local convexity changes. Heuristically, one can consider realizations of simplicial spheres PL homeomorphic to cross polytopes to record changes in local convexity. \\
	
	\color{black}
	
	\begin{prop} \label{flaglsoptrace}
	
		Let $\Delta$ be a simplicial sphere PL homeomorphic to the boundary of a cross polytope. Suppose that we use the linear systems of parameters in Proposition \ref{lsopsuspedgediv} and Proposition \ref{edgecongenlsop} for edge subdivisions and contractions respectively. The change in linear systems of parameters from PL homeomorphisms is described below.

		\begin{enumerate}
			\item The coefficients of the variable $x_p$ associated to a vertex $p \in V(\Delta)$ are weighted averages of coefficients of suspension and edge subdivision points. These weights are products of weights from ``layers'' of edge contractions the vertices are (nested) inputs for. For edge subdivisions and contractions, we will take these weights to lie in $(-1, 0)$ and $(0, 1)$ respectively. This is motivated by ``increased convexity'' and negative coefficients of subdividing vertices from edge subdivisions in Proposition \ref{edgesubdivwallconv} and Example \ref{edgelsopavg} discussed in Remark \ref{edgesubdivconreal}. \\
			
			\item If all the weights in Part 1 are equal to $\pm \frac{1}{2}$, then the coefficient of $x_p$ is the ``weighted difference'' of suspension points and subdivision points which are either equal to $p$ or contraction points leading to $p$. The weight of a particular input vertex $m$ is determined by the number of edge contractions involving $m$ or a (nested) contraction of it. Note that the weights can be set arbitarily close to $\pm \frac{1}{2}$ since there are only finitely many weights to omit for any particular variable (Proposition \ref{edgecongenlsop}). By Proposition \ref{lsopsuspedgediv} and Proposition \ref{edgecongenlsop}, we can take the weights in $(0, 1)$ and $(-1, 0)$ to be arbitarily close to $\frac{1}{2}$ and $-\frac{1}{2}$ respectively. \\
		\end{enumerate}
	\end{prop}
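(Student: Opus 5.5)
The plan is an induction on the length of a sequence of moves producing $\Delta$ from the boundary of a cross polytope. By the theorem of Alexander, $\Delta$ is connected to the boundary of the $d$-dimensional cross polytope by a finite sequence of edge subdivisions and contractions. The base case is Example \ref{crosspolylsop} together with Part 1 of Proposition \ref{lsopsuspedgediv}: the cross polytope is an iterated suspension, and its linear system of parameters is $\theta_i = x_i + x_{-i}$. In this system every vertex coefficient is a suspension coefficient $\pm 1$ or $0$. I would keep track, for each vertex $p$ of the current complex, of the column vector $\ell|_p \in k^d$. The inductive claim is that $\ell|_p$ is a finite linear combination of the base suspension columns $\theta|_m$, with coefficients equal to products of the weights attached to the moves along the history of $p$.

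The inductive step splits into the two move types. For an edge subdivision of $\{a,b\}$ with new vertex $v$, Part 2 of Proposition \ref{lsopsuspedgediv} sets $\ell|_v = \alpha\theta|_a + \beta\theta|_b$. It leaves every other column unchanged. Choosing $\alpha, \beta \in (-1,0)$, in line with the sign discussion in Remark \ref{edgesubdivconreal}, the expansion of $\ell|_v$ is obtained by substituting the inductive expansions of $\theta|_a$ and $\theta|_b$. Each coefficient therefore acquires one more factor from $(-1,0)$. For a contraction of $\{a,b\}$ to $w$, Proposition \ref{edgecongenlsop} sets $\ell|_w = (1-\omega)\theta|_a + \omega\theta|_b$. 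This works for all but finitely many $\omega$, since $k$ is infinite. We may therefore choose $\omega \in (0,1)$ avoiding the finite bad set, and both weights $1-\omega$ and $\omega$ lie in $(0,1)$. Substituting again multiplies the existing coefficients by the new layer's weight. Unrolling this nested substitution gives Part 1: the coefficient is a sum over base input vertices $m$, weighted by products of layer weights.

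For Part 2, I would set all subdivision weights to $-\tfrac12$ and all contraction weights to $\tfrac12$. Each input vertex $m$ then contributes $\pm 2^{-N(m)}\theta|_m$, where $N(m)$ is the number of layers (contractions or subdivisions) along the path from $m$ to $p$. The sign is determined by the number of subdivision layers. This yields the stated ``weighted difference'' of suspension and subdivision points.

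The main obstacle is that $\omega = \tfrac12$ may lie in the finite exceptional set of Proposition \ref{edgecongenlsop}, and likewise a chosen $\alpha, \beta$ may need to be generic. I would handle this by perturbation. At each step the exceptional set is finite, so we can pick weights within any $\varepsilon$ of $\pm\tfrac12$, processing the moves in order. Finitely many moves produce finitely many constraints. Since the final coefficients are polynomial, hence continuous, in the weights, they can be made arbitrarily close to the exact $\pm\tfrac12$ expressions. This would require $k$ to be an ordered field such as $\mathbb{R}$, and I would state that explicitly.
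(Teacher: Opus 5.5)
Your induction is the same basic idea as the paper's ``tracing backwards through layers''. However, you unroll at the wrong place, and this breaks both the form of the conclusion and your Part 2 formula.

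The paper stops the backward tracing at vertices introduced by subdivision. A subdividing vertex $v$ of $\{a,b\}$ is treated as an atomic input whose column is fixed when it is introduced, $\ell|_v=\alpha\theta|_a+\beta\theta|_b$ with $\alpha+\beta=-1$, so it is $-1$ times a weighted average. Only contraction layers are unwound. A contraction consumes both of its endpoints, so the contraction history of any vertex $p$ is a binary tree whose leaves are distinct suspension or subdivision vertices. Each leaf has a unique path to $p$, and its weight is a product of contraction weights $\omega,1-\omega\in(0,1)$; these products are positive and sum to $1$. That is why Part 1 speaks of weighted averages of suspension \emph{and subdivision} points with weights coming only from contractions. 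It is also why the exponent in Part 2 counts only contractions, with the sign $-1$ carried by the subdivision inputs.

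You instead unroll through subdivisions down to the original cross-polytope vertices. A subdivision does not consume $a$ and $b$, so the history is no longer a tree, and a base vertex can reach $p$ along several paths of different lengths. For instance, subdivide $\{1,2\}$ by $v$ with $\alpha=\beta=-\frac{1}{2}$. Then contract $\{1,v\}$ to $w$; the link condition holds, and $\omega=\frac{1}{2}$ is admissible. This gives $\ell|_w=\frac{1}{2}\theta|_1+\frac{1}{2}\ell|_v=\frac{1}{4}\theta|_1-\frac{1}{4}\theta|_2$. The coefficient $\frac{1}{4}=\frac{1}{2}-\frac{1}{4}$ of $\theta|_1$ is a sum over two paths, not $\pm 2^{-N(1)}$ for a single layer count.

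So your Part 2 formula fails in general. Your expansion also has signed weights that need not sum to $1$, so it is not a weighted average. Your closing claim that this gives the ``weighted difference of suspension and subdivision points'' does not follow either, since no subdivision points survive in your expansion. The repair is to run the same induction with subdivision vertices as leaves and with $\alpha+\beta=-1$.

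Minor points:
\begin{itemize}
\item Proposition~\ref{lsopsuspedgediv} holds for all $\alpha,\beta\neq 0$, so subdivision weights need no genericity.
\item The base columns $\theta|_{\pm i}=e_i$ have entries $1$ or $0$, not $\pm 1$.
\item Your perturbation argument for weights near $\pm\frac{1}{2}$ (with $k$ ordered) is fine, and it makes explicit what the statement only asserts.
\end{itemize}
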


	\begin{proof}

		Vertices $p \in V(\Delta)$ are initially introduced either via suspensions or edge subdivisions. Suspensions produce new linear form of $\ell_k : x_k + x_{-k}$ (and coefficient 1) and edge subdivisions give $-1$ multiplied by the weighted average of the coefficients of the vertices bounding the subdivided edge when $p$ is introduced. In the latter case, the subdividing vertex $x_v$ with respect to an edge $e = \{ a, b \}$ appears in linear forms where $x_a$ or $x_b$ is used. This gives the actual coordinate of the vertex if it is \emph{not} the contraction point of an edge contraction. \\

		Suppose that $p$ is the contraction point of some edge contraction. Working backwards, the coefficient of a vertex $p$ in the very last step of the edge subdivisions/contractions from a cross polytope is a weighted average of the pair of vertices merged to form it. These vertices in turn are either introduced once via a suspension or edge subdivision (as the subdividing vertex). In general, we can trace the vertices involved in edge contractions. Tracing backwards through ``layers'', we see that $\theta|_p$ is a weighted average of initial suspension and subdividing vertices used to form the inputs of edge contractions forming inputs leading to the current vertex $p$. For example, having $\theta|_w = (1 - \eta) \theta|_a + \eta \theta|_b$ for some $\eta \in (0, 1)$ and $\theta|_a = (1 - \omega) \ell_r + \omega \ell_s$ for some $\omega \in (0, 1)$ with $a$ as the contraction point of an edge $\{ r, s \} \in \Delta$ and $\ell$ as the linear system of parameters just before contraction of the edge $e = \{a, b\}$ would mean that $\theta|_w = (1 - \eta) (1 - \omega) \ell|_r + (1  - \eta) \omega \ell|_s + \eta \theta|_b$. The weights associated to suspension vertices are positive while those associated to subdividing vertices of edges are negative. \\
		
		If all the weights in $(0, 1)$ are set equal to $\frac{1}{2}$, this amounts to weighting the input vertices by $\frac{1}{2^m}$, where $m$ is the number of edge contractions the vertex is an input for (either directly or as a ``nested'' input) before multiplying by $1$ for a suspension vertex and $-1$ for a subdividing vertex. The linear forms in which the variables appear can also be traced backwards. In particular, the variable $x_p$ appears in the linear forms where the (nested) input vertices for edge contractions leading to the vertex $p$ (or just $p$ itself if $p$ not a contraction point) occur. 
		
	\end{proof}

	\begin{cor} \textbf{(``Expected'' convexity properties and edge subdivisions/contractions) \\} \label{plhomeomflagsign}
		Suppose that $\Delta$ is a flag simplicial complex. Tracking signs and restricting to vertices attached to a particular codimension 1 face/wall of $\Delta$, Proposition \ref{flaglsoptrace} yields analogues of wall relations. The signs of the vertices $p$ involved are then yield counterparts (non)convexity of the wall crossing with respect to $p$ corresponding to nonpositive/nonnegative coordinates in wall relations. \\
	\end{cor}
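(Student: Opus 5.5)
The plan is to derive Corollary \ref{plhomeomflagsign} directly from Proposition \ref{flaglsoptrace}, combined with the restriction mechanism of Remark \ref{wallanaloguelsop} and the sign conventions of Remark \ref{edgesubdivconreal}. First I will fix a wall $\tau = F \cap F'$ of $\Delta$ with off-wall vertices $q \in F$ and $q' \in F'$. Following Remark \ref{wallanaloguelsop}, I will multiply the linear forms $\ell_1, \ldots, \ell_d$ of the linear system of parameters constructed in Proposition \ref{flaglsoptrace} by the face monomial $x^\tau$ inside $A^\cdot(\Delta)$. Because $\Delta$ is flag, the coning decomposition applies, and only the variables $x_p$ with $p \in \tau \cup \{ q, q' \}$ survive this multiplication. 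So each $\ell_i$ restricts to a linear relation supported on $F \cup F'$. Taking the appropriate linear combination of these restricted forms, as in Remark \ref{lsopwallcoord} (Part 2), produces a single relation $\sum_{p \in F \cup F'} c_p x_p = 0$. By uniqueness up to scaling, this relation plays the role of the wall relation of Proposition \ref{ratwallconv}.

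Next comes the sign bookkeeping. By Part 1 of Proposition \ref{flaglsoptrace}, each coefficient $\ell_{i,p}$ is a signed weighted sum over the "input" vertices that lead to $p$:
\begin{itemize}
	\item suspension vertices contribute with a positive sign;
	\item subdividing vertices contribute with weights in $(-1,0)$;
	\item contraction layers multiply these contributions by weights in $(0,1)$, which leaves signs unchanged.
\end{itemize}
I will show that the sign of $c_p$ is governed by whether $p$ traces back to a suspension input or a subdividing input. To compare with Proposition \ref{ratwallconv}, I will normalize so that the off-wall coefficients $c_q$ and $c_{q'}$ are positive; if necessary, this is done by reading off the relation coming from the suspension form $x_k + x_{-k}$ or from its descendants. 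With this normalization, an on-wall vertex $p$ with $c_p \le 0$ gives the analogue of convexity with respect to $p$, and $c_p \ge 0$ gives the analogue of concavity, mirroring Parts 1--3 of Proposition \ref{ratwallconv} and Part 1 of Proposition \ref{convsep}. I will check this against the explicit computations of Example \ref{edgelsopavg} and Proposition \ref{edgesubdivwallconv}, where substituting a subdividing vertex makes the on-wall coefficients more negative.

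The main obstacle is the combination step. After multiplying by $x^\tau$, the individual forms $\ell_i$ need not each be proportional to one relation, and combining them could mix signs coming from different inputs. My first attempt will use Part 2 of Proposition \ref{flaglsoptrace}: take all weights close to $\pm\frac{1}{2}$, so that the coefficients become explicit signed counts, and rely on Proposition \ref{edgecongenlsop} to avoid the finitely many degenerate weights. I will then argue by induction along the sequence of suspensions, subdivisions and contractions that the sign of each surviving coefficient is stable under the combination. If this fails in general, I will weaken the claim to an analogue stated per linear form $\ell_i$, which is what the word "analogues" in the statement permits.
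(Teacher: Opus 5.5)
Your route is the paper's. You restrict the linear system of parameters to a wall $\tau$ by multiplying by $x^\tau$ as in Remark \ref{wallanaloguelsop}, read the signs of the surviving coefficients off Proposition \ref{flaglsoptrace}, and interpret them with the sign convention of Proposition \ref{ratwallconv}. The paper's proof does no more than this. It declares the restricted forms themselves to be the wall-relation analogues and never combines them. So your fallback ``per linear form'' version is exactly its argument, and that is all a statement phrased in terms of ``analogues'' asks for.

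You should drop the combination-and-induction step you make central, because it cannot work as described. Multiplying by $x^\tau$ gives $\sum_{p \in \tau \cup \{q,q'\}} \ell_{i,p}\, x_p x^\tau = 0$ for each $i$. So the tuple $(x_p x^\tau)_p$ is annihilated by the $d \times (d+1)$ matrix $M = (\ell_{i,p})_{p \in \tau \cup \{q,q'\}}$. By the l.s.o.p.\ condition on $F$ and $F'$, $\ker M$ is one-dimensional. Its spanning vector $c$ is the genuine wall relation among the columns $\ell|_p$, and it is what the $x_p x^\tau$ compute.

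Since $Mc = 0$, $c$ is orthogonal to every row. Over an ordered field, where signs make sense, this means $c$ is \emph{not} a linear combination of the restricted forms. Moreover, the entries of $c$ are signed maximal minors of $M$, and their signs are not controlled by the entrywise signs that Proposition \ref{flaglsoptrace} tracks.

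The relation computed in the paper's own chosen coordinates is not reachable either. In Example \ref{edgelsopavg}, take the wall $\{1,3\}$ with off-wall vertices $12$ and $-2$. The restricted forms are $x_1 - \frac{1}{2}x_{12}$, $x_{-2} - \frac{1}{2}x_{12}$ and $x_3$. No nonzero combination of them is proportional to $\frac{3}{2}x_{12} + x_{-2} - x_1$: matching the $x_1$ and $x_{-2}$ coefficients forces the $x_{12}$-coefficient to be $0$. That relation arises only after substituting $x_{12} = \frac{x_1+x_2}{2}$.

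So no induction will establish ``sign stability under combination.'' State the result per form, as the paper does.
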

	
	\begin{proof}
		Most of this was discussed in Remark \ref{wallanaloguelsop}. Let $F \in \Delta$ be a face of $\Delta$. Recall that each $A^k(\Delta)$ ($0 \le k \le d$) decomposes into a direct sum of the kernel of multiplication by $x^F$ ($A^k(\Ast_\Delta(F))$) and the image of multiplication by $x^F$ ($A^k(\St_\Delta(F))$). Suppose that $F \in \Delta$ is a codimension 1 face/wall. Then, the restrictions of linear systems of parameters to such faces $F$ yield wall relations. The statement then follows form applying p. 420 -- 421 of \cite{Mat} and Proposition \ref{flaglsoptrace}.
	\end{proof}
	
	The same reasoning as above can be used to analyze \emph{changes} in linear systems of parameters and convexity properties of PL-homeomorphism classes of simplicial complexes formed by repeated suspensions. Some related material on connections between wall relations and linear system of parameters is in Remark \ref{lsopwallcoord}. \\

	\begin{cor} \textbf{(Generalizations for PL homeomorphisms to (repeated) suspensions) \\} \label{genplhomeom}
		Suppose that $\Delta$ is a simplicial complex PL-homeomorphic to a repeated suspension of some simplicial complex $A$. Assume that we use the linear systems of parameters Proposition \ref{lsopsuspedgediv} and Proposition \ref{edgecongenlsop}. Then, the \emph{change} in linear system of parameters and convexity properties from this repeated suspension of $A$ to get to $\Delta$ is given by Proposition \ref{flaglsoptrace} and Corollary \ref{plhomeomflagsign}.
	\end{cor}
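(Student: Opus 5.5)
The statement to prove is Corollary \ref{genplhomeom}. I would reduce it to Proposition \ref{flaglsoptrace} and Corollary \ref{plhomeomflagsign} by treating the repeated suspension $\Susp^k(A)$ as the base case. This replaces the boundary of the cross polytope, which is itself the repeated suspension of the empty complex, or of $S^0$.

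\textbf{Step 1: fix a base linear system of parameters.} Take an arbitrary l.s.o.p. $\theta_1, \ldots, \theta_m$ for $A$. Apply Part 1 of Proposition \ref{lsopsuspedgediv} once per suspension. This produces an l.s.o.p. for $\Susp^k(A)$ consisting of the $\theta_i$ together with the forms $x_{p_j} + x_{q_j}$ for the suspension pairs $(p_j, q_j)$. These forms are exactly the suspension wall relations appearing in Part 1 of Proposition \ref{bdry4cycleflat}.

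\textbf{Step 2: pass to $\Delta$.} By Alexander's theorem, the PL homeomorphism from $\Susp^k(A)$ to $\Delta$ factors into edge subdivisions and contractions. At each step I update the l.s.o.p. using Part 2 of Proposition \ref{lsopsuspedgediv} with weights in $(-1,0)$ for subdivisions, and Proposition \ref{edgecongenlsop} with generic weights in $(0,1)$ for contractions. Infiniteness of $k$ guarantees that only finitely many contraction weights must be avoided at each step, so the updated forms remain an l.s.o.p. throughout.

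\textbf{Step 3: track the change by induction.} I would induct on the number of moves and run the tracing argument from the proof of Proposition \ref{flaglsoptrace}. The key point is that that argument never uses the specific form of the starting complex, only how each coefficient column $\theta|_p$ is updated. So every $\ell|_p$ is a nested weighted combination of the columns of base vertices and subdividing vertices. Suspension inputs enter with positive sign and subdivision inputs with negative sign, and the weights are products across contraction layers. Subtracting the base columns isolates the \emph{change}. That is exactly the content of Part 1 of Proposition \ref{flaglsoptrace}, now applied relative to $\Susp^k(A)$ instead of the cross polytope.

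\textbf{Step 4: restrict to walls.} Restricting to the vertices attached to a wall and using the coning decomposition from Remark \ref{wallanaloguelsop} gives the wall-relation analogues. The sign reading then follows from Corollary \ref{plhomeomflagsign}.

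\textbf{Main obstacle.} The hard part is that $A$ carries its own l.s.o.p., whose columns are not wall relations and carry no sign information. Steps 3 and 4 must therefore be phrased purely in terms of differences from the base l.s.o.p. I would handle this by writing every column as a base part plus an increment. Linearity of the updates in Propositions \ref{lsopsuspedgediv} and \ref{edgecongenlsop} is what makes this split work. The sign conclusions are then stated only for the increment, and for the suspension forms, which do have the required signs. One further point needs checking: Corollary \ref{plhomeomflagsign} assumes flagness. I would either assume $\Delta$ is flag, or restrict the sign statement to walls where the coning decomposition of Remark \ref{wallanaloguelsop} applies.
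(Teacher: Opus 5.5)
Your proposal is correct and follows the paper's route. The paper's proof is only the one-line appeal to Alexander's theorem: $\Susp^k(A)$ and $\Delta$ are connected by edge subdivisions and contractions, and the tracing argument of Proposition~\ref{flaglsoptrace} and Corollary~\ref{plhomeomflagsign} is then applied with $\Susp^k(A)$ as the new base. What you add is what the paper leaves implicit: an explicit base l.s.o.p., the base-plus-increment bookkeeping for the sign-less columns coming from $A$, and the caveat that Corollary~\ref{plhomeomflagsign} needs flagness.
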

	
	\begin{proof}
		This follows from the fact that PL homeomorphic simplicial complexes are connected by a sequence of (stellar) edge subdivisions and their inverses (Corollary [10:2d] on p. 302 of \cite{Alex} and Corollary 4.1 on p. 75 of \cite{LN}). \\
	\end{proof}

	\begin{rem} \textbf{(Comments on parameters involved) \\}
		A direct analogue of the results above for the convexity context in the previous section can be obtained using direct substitution (also see Remark \ref{edgesubdivconreal}). The suspensions will yield linear forms $\ell_k = x_k + x_{-k}$ as in the previous section and the edge subdivisions will come from direct substitutions $x_v$ in place of $x_a$ or $x_b$ of relations $x_a + x_b - 2 x_v = 0$. For example, the variable $x_a$ will be replaced by $2x_v - x_b$. How we treat sign relations in wall relations will depend on whether the replaced vertex is an off-wall vertex or an on-wall vertex. In the case of edge contractions, we will use the linear system of parameters in Proposition \ref{edgecongenlsop}. This would yield coefficients/changes in coefficients in wall relations that are weighted averages. However, the input signs are not only a function of edge subdivisions/contractions but also whether the vertices introduced yield off-wall or on-wall vertices of wall crossings.   
	\end{rem}

\end{document}